\numberwithin{equation}{section}
\numberwithin{figure}{section}
\newtheorem{theorem}{Theorem}[section]
\newtheorem{lemma}[theorem]{Lemma}
\newtheorem{proposition}[theorem]{Proposition}
\newtheorem{corollary}[theorem]{Corollary}
\newtheorem{fact}[theorem]{Fact}
\newtheorem{example}[theorem]{Example}
\theoremstyle{definition}
\theoremstyle{remark}
\newtheorem{remark}[theorem]{Remark}
\theoremstyle{conjecture}
\renewcommand{\setminus}{-}
\newcommand{\Abb}[4]{{\mathbb{A}}_{{#1},{#2},{#3}}^{{#4}}}
\newcommand{\Atbb}[4]
{{\widetilde{\mathbb{A}}}_{{#1},{#2},{#3}}^{{#4}}}
\newcommand{\Attbb}[4]
{\tilde{\tilde{{\mathbb{A}}}}_{{#1},{#2},{#3}}^{{#4}}}
\newcommand{\pr}[2]{\operatorname{pr}_{{#1}\to{#2}}}
\newcommand{\KC}{\widetilde{K}_{\lambda,\nu}^{\mathbb{C}}}
\newcommand{\Acal}[4]{{\mathcal{A}}_{{#1},{#2},{#3}}^{{#4}}}
\newcommand{\Atcal}[4]{\widetilde{{\mathcal{A}}}_{{#1},{#2},{#3}}^{{#4}}}
\newcommand{\Cbb}[3]{{\mathbb{C}}_{{#1},{#2}}^{{#3}}}
\newcommand{\Ctbb}[3]{\widetilde{{\mathbb{C}}}_{{#1},{#2}}^{{#3}}}
\newcommand{\Ccal}[3]{{\mathcal{C}}_{{#1},{#2}}^{{#3}}}
\newcommand{\Ctcal}[3]{\widetilde{{\mathcal{C}}}_{{#1},{#2}}^{{#3}}}
\newcommand{\Dcall}[2]{{\mathcal{D}}_{{#1}}^{{#2}}}
\newcommand{\Exterior}{\mathchoice{{\textstyle\bigwedge}}%
    {{\bigwedge}}%
    {{\textstyle\wedge}}%
    {{\scriptstyle\wedge}}}
\newcommand{\Ttbb}[3]{\widetilde{{\mathbb{T}}}_{{#1},{#2}}^{{#3}}}
\newcommand{\Ttcal}[3]{\widetilde{\mathcal{T}}_{{#1},{#2}}^{{#3}}}
\newcommand{\Ttcall}[2]{\widetilde{\mathcal{T}}_{{#1}}^{{#2}}}
\begin{document}
\title
{Residue formula
 for regular symmetry breaking operators
}
\author{Toshiyuki KOBAYASHI}
\date{}

\maketitle%

\abstract
{We prove an explicit residue formula
 for a meromorphic continuation of conformally covariant integral operators
 between differential forms
 on ${\mathbb{R}}^n$
 and on its hyperplane.  
The results provide a simple and new construction 
of the conformally covariant differential symmetry breaking operators
 between differential forms
 on the sphere 
 and those on its totally geodesic hypersurface
 that were introduced
 in [Kobayashi--Kubo--Pevzner, Lect.~Notes Math.~(2016)].  
Moreover, 
 we determine the zeros
 of the matrix-valued regular symmetry breaking operators
 between principal series representations
 of $O(n+1,1)$ and $O(n,1)$.}

\setcounter{tocdepth}{1}
\tableofcontents

\vskip 1pc
\section{Statement of the main results}
\label{subsec:residue}

Let ${\mathcal{E}}^i({\mathbb{R}}^{n})$ be the space
 of (complex-valued) differential $i$-forms on ${\mathbb{R}}^{n}$, 
 and ${\mathcal{E}}_c^i({\mathbb{R}}^{n})$ the subspace
 of those having compact support.  
The object of study in this article is a meromorphic family of operators
\[
\Abb \lambda \nu \pm {i,j}:
{\mathcal{E}}_c^i({\mathbb{R}}^{n}) \to {\mathcal{E}}^j({\mathbb{R}}^{n-1}), 
\]
which are initially defined as integral operators
 when ${\operatorname{Re}} \lambda \gg |{\operatorname{Re}} \nu|$.

The operators $\Abb \lambda \nu \pm {i,j}$ arise as
\begin{enumerate}
\item[$\bullet$] matrix-valued regular {\it{symmetry breaking operators}}
 for principal series representations 
 for strong Gelfand pair $(O(n+1,1),O(n,1))$;
\item[$\bullet$]
conformally covariant operators on differential forms
 on the model space $(S^n,S^{n-1})$;
\item[$\bullet$]
the formal adjoint of a deformation of matrix-valued Poisson transforms.  
\end{enumerate}

By choosing appropriate Gamma factors
 $a_{\pm}(\lambda,\nu)$ (see \eqref{eqn:aln}), 
 we renormalize 
 $\Abb \lambda \nu \pm {i,j}$
 by 
\[
  \Atbb \lambda \nu \pm {i,j}:= a_{\pm}(\lambda,\nu)\Abb \lambda \nu \pm {i,j}
\]
 so that $\Atbb \lambda \nu \pm {i,j}$ depend holomorphically
 on $(\lambda, \nu)$ in the entire plane ${\mathbb{C}}^2$, 
 see Fact \ref{fact:holo} below.

The goal of this paper is in twofold:
\begin{enumerate}
\item[$\bullet$] to find the residue formula of the matrix-valued operators $\Abb \lambda \nu \pm {i,j}$
 along $\nu-\lambda \in {\mathbb{N}}$
 (see Theorem \ref{thm:153316a});
\item[$\bullet$]
to determine all the (isolated) zeros of the normalized operators $\Atbb \lambda \nu \pm {i,j}$ (see Theorem \ref{thm:Avanish} and Remark \ref{rem:Avanish}).  
\end{enumerate} 

\subsection{Integral operators $\Abb \lambda \nu \pm {i,j} \colon {\mathcal{E}}^i({\mathbb{R}}^n) \to {\mathcal{E}}^j({\mathbb{R}}^{n-1})$}
\label{subsec:intmero}
To state our main results, 
 we fix some notation.  

Let 
$
  |x|:= (x_1^2 + \cdots + x_n^2)^{\frac 1 2}
$
 for $x \in {\mathbb{R}}^n$.  
We define 
\begin{equation}
\label{eqn:psix}
\psi \colon {\mathbb{R}}^{n}\setminus \{0\} \to O(n), 
\quad
x \mapsto I_n -\frac{2 x {}^{t\!} x}{|x|^2}, 
\end{equation}
 as a matrix expression of the reflection 
 with respect to normal vector $x$.  
Let $\sigma^{(i)} \colon O(n) \to GL_{\mathbb{C}}(\Exterior^i({\mathbb{C}}^n))$
 be the $i$th exterior representation 
 of the natural representation of $O(n)$ on ${\mathbb{C}}^n$.  

We identify ${\mathcal{E}}^i({\mathbb{R}}^{n})$
 with $C^{\infty}({\mathbb{R}}^{n}) \otimes \Exterior^i({\mathbb{C}}^n)$, 
 and similarly,
 ${\mathcal{E}}^j({\mathbb{R}}^{n-1})$
 with $
C^{\infty}({\mathbb{R}}^{n-1}) \otimes \Exterior^j({\mathbb{C}}^{n-1}).  
$
We define 
$
  \pr i j \colon \Exterior^{i}({\mathbb{C}}^{n}) \to 
                 \Exterior^{j}({\mathbb{C}}^{n-1})
$
 for $j=i-1,i$ to be the first and second projections
 of the decomposition 
$
   \Exterior^{i}({\mathbb{C}}^{n})
=
   \Exterior^{i-1}({\mathbb{C}}^{n-1})
\oplus
   \Exterior^{i}({\mathbb{C}}^{n-1})
$, 
 respectively, 
 so that the following linear map
\begin{equation}
\label{eqn:symb}
  {\operatorname{Rest}}_{x_n=0}
  \circ
  ({\operatorname{id}} \oplus \iota_{\frac{\partial}{\partial x_n}})
  \colon
  {\mathcal{E}}^i({\mathbb{R}}^n) \to {\mathcal{E}}^i({\mathbb{R}}^{n-1}) \oplus {\mathcal{E}}^{i-1}({\mathbb{R}}^{n-1})
\end{equation}
 is identified with 
$
{\operatorname{id}} \otimes (\pr ii \oplus \pr i {i-1}).  
$ 
Here $\iota_{\frac{\partial}{\partial x_n}}$ denotes the inner multiplication 
of the vector field 
 $\frac{\partial}{\partial x_n}$.

For $j \in \{i-1,i\}$ and ${\operatorname{Re}} \lambda \gg |{\operatorname{Re}}\nu|$, 
 we define $\operatorname{Hom}_{\mathbb{C}}(\Exterior^i({\mathbb{C}}^{n}), \Exterior^j({\mathbb{C}}^{n-1}))$-valued, 
 locally integrable functions $\Acal \lambda \nu \pm {i,j}$
 on ${\mathbb{R}}^n$ by
\begin{align}
\Acal \lambda \nu {+} {i,j}(x)
:=&
(x_1^2 + \cdots + x_n^2)^{-\nu}|x_n|^{\lambda+ \nu-n} \pr i j \circ \sigma^{(i)}(\psi(x)), 
\label{eqn:Aijp}
\\
\Acal \lambda \nu {-} {i,j}(x)
:=&
\Acal \lambda \nu {+} {i,j}(x) {\operatorname{sgn}}(x_n).  
\label{eqn:Aijm}
\end{align} 

We introduce a normalizing factor 
 $a_{\pm}(\lambda,\nu)$ by 
\begin{equation}
\label{eqn:aln}
  a_{\varepsilon (\kappa)}(\lambda,\nu)^{-1}
 :=
  \Gamma(\frac{\lambda+\nu-n+1+\kappa}{2})\Gamma(\frac{\lambda-\nu+\kappa}{2})
\end{equation} 
for $\kappa=0,1$, 
 where we set $\varepsilon \colon \{0,1\} \to \{\pm\}$
 by $\varepsilon (0) =+$ and $\varepsilon (1)=-$. 
Then $a_{\pm}(\lambda,\nu)$ are holomorphic functions
 of $(\lambda,\nu)$ in ${\mathbb{C}}^2$.  
We set 
\begin{equation}
\Atcal \lambda \nu {\pm} {i,j}
:=
a_{\pm}(\lambda,\nu) \Atcal \lambda \nu {\pm} {i,j}.  
\label{eqn:Aijn}
\end{equation}

\begin{fact}
[see \cite{sbon, sbonvec}.]
\label{fact:holo}
Let $j=i-1$ or $i$. 
Then the integral operators
\[
   \Atbb \lambda \nu \pm {i,j} \colon
   {\mathcal{E}}_c^i({\mathbb{R}}^n) \to {\mathcal{E}}^j({\mathbb{R}}^{n-1}),    f \mapsto {\operatorname{Rest}}_{x_n=0} \circ(\Atcal \lambda \nu \pm {i,j} \ast f), 
\]
originally defined when ${\operatorname{Re}} \lambda \gg |{\operatorname{Re}}\nu|$, 
 extend to continuous operators
 that depend holomorphically on $(\lambda,\nu)$
 in the entire complex plane ${\mathbb{C}}^2$.  
Moreover,
 $\{(\lambda,\nu) \in {\mathbb{C}}^2 : \Atbb \lambda \nu \pm {i,j}=0\}$
 is a discrete subset of ${\mathbb{C}}^2$.  
\end{fact}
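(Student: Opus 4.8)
\medskip
\noindent\textit{Outline of a proof (see also \cite{sbon, sbonvec}).}
The plan is to reduce the statement to a one-variable meromorphic continuation performed in polar coordinates, where the kernel factorises into two Riesz-type families, one in $\lambda-\nu$ and one in $\lambda+\nu$. Since convolution with a fixed $f\in{\mathcal{E}}_c^i({\mathbb{R}}^n)$ carries a $\operatorname{Hom}_{\mathbb{C}}(\Exterior^i({\mathbb{C}}^n),\Exterior^j({\mathbb{C}}^{n-1}))$-valued distribution on ${\mathbb{R}}^n$ to a smooth $\Exterior^j({\mathbb{C}}^{n-1})$-valued function on ${\mathbb{R}}^n$, continuously in $f$ and compatibly with holomorphic dependence on parameters, and since ${\operatorname{Rest}}_{x_n=0}$ then maps continuously into ${\mathcal{E}}^j({\mathbb{R}}^{n-1})$, it suffices to prove that $a_{\pm}(\lambda,\nu)\,\Acal \lambda \nu \pm {i,j}$, which is a locally integrable function when $\operatorname{Re}\lambda\gg|\operatorname{Re}\nu|$, extends to a distribution-valued holomorphic function on ${\mathbb{C}}^2$. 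Write $\kappa\in\{0,1\}$ for the index with $\varepsilon(\kappa)=\pm$ and $M(x):=\pr ij\circ\sigma^{(i)}(\psi(x))$, so that $\Acal \lambda \nu \pm {i,j}(x)=(x_1^2+\cdots+x_n^2)^{-\nu}|x_n|^{\lambda+\nu-n}(\operatorname{sgn}x_n)^{\kappa}M(x)$; the two features of $M$ that I would use are that $M$ is bounded and homogeneous of degree $0$ on ${\mathbb{R}}^n\setminus\{0\}$, and that $M(-x)=M(x)$ because $\psi(-x)=\psi(x)$.

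First I would pass, for $\operatorname{Re}\lambda\gg|\operatorname{Re}\nu|$ and $\phi\in{\mathcal{E}}_c^i({\mathbb{R}}^n)$, to polar coordinates $x=\rho\omega$ ($\rho>0$, $\omega\in S^{n-1}$), for which $dx=\rho^{n-1}d\rho\,d\omega$; by homogeneity of $M$ and Fubini,
\[
\langle\Acal \lambda \nu \pm {i,j},\phi\rangle
=\int_0^\infty\rho^{\lambda-\nu-1}\,G(\rho)\,d\rho,
\qquad
G(\rho):=\int_{S^{n-1}}|\omega_n|^{\lambda+\nu-n}(\operatorname{sgn}\omega_n)^{\kappa}M(\omega)\,\phi(\rho\omega)\,d\omega .
\]
Here $G(\rho)$ is a one-variable Riesz integral in $\lambda+\nu$ whose only singularities come from the equator $\{\omega_n=0\}$; it is meromorphic in $\lambda+\nu$, smooth in $\rho\in[0,\infty)$ with $\rho$-derivatives locally uniform in $\lambda+\nu$, and its poles lie among the zeros of $\Gamma(\tfrac{\lambda+\nu-n+1+\kappa}{2})^{-1}$, so $\Phi(\rho):=\Gamma(\tfrac{\lambda+\nu-n+1+\kappa}{2})^{-1}G(\rho)$ is holomorphic in $\lambda+\nu$ and smooth in $\rho$. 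Consequently $J:=\int_0^\infty\rho^{\lambda-\nu-1}\Phi(\rho)\,d\rho$ is holomorphic in $\lambda+\nu$ and meromorphic in $\lambda-\nu$ with simple poles only at $\lambda-\nu\in\{0,-1,-2,\dots\}$; the residue at $\lambda-\nu=-k$ is a nonzero constant times $\Gamma(\tfrac{\lambda+\nu-n+1+\kappa}{2})^{-1}\int_{S^{n-1}}|\omega_n|^{\lambda+\nu-n}(\operatorname{sgn}\omega_n)^{\kappa}M(\omega)\,Q_k(\omega)\,d\omega$, where $Q_k(\omega):=\partial_\rho^k\phi(\rho\omega)|_{\rho=0}$ is homogeneous of degree $k$ in $\omega$, and substituting $\omega\mapsto-\omega$ (using $M(-\omega)=M(\omega)$, $(\operatorname{sgn}(-\omega_n))^{\kappa}=(-1)^{\kappa}(\operatorname{sgn}\omega_n)^{\kappa}$, $Q_k(-\omega)=(-1)^kQ_k(\omega)$) shows this residue vanishes unless $k\equiv\kappa\pmod 2$. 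Hence the poles of $J$ in $\lambda-\nu$ lie among the zeros of $\Gamma(\tfrac{\lambda-\nu+\kappa}{2})^{-1}$. Since $\langle\Acal \lambda \nu \pm {i,j},\phi\rangle=\Gamma(\tfrac{\lambda+\nu-n+1+\kappa}{2})\,J$ and, by \eqref{eqn:aln}, $a_{\pm}(\lambda,\nu)=\Gamma(\tfrac{\lambda+\nu-n+1+\kappa}{2})^{-1}\Gamma(\tfrac{\lambda-\nu+\kappa}{2})^{-1}$, it follows that $a_{\pm}(\lambda,\nu)\langle\Acal \lambda \nu \pm {i,j},\phi\rangle=\Gamma(\tfrac{\lambda-\nu+\kappa}{2})^{-1}J$, which is holomorphic in $\lambda-\nu$ and in $\lambda+\nu$ separately and locally bounded, hence jointly holomorphic on ${\mathbb{C}}^2$; uniformity in $\phi$, which upgrades this to holomorphy as a distribution-valued function, is read off from the same formulas. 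This establishes the holomorphic continuation, hence the holomorphic dependence of $\Atbb \lambda \nu \pm {i,j}$ on $(\lambda,\nu)$.

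For the last assertion, the holomorphic family $(\lambda,\nu)\mapsto\Atbb \lambda \nu \pm {i,j}$ is not identically $0$, because in the convergence range it equals the convergent integral operator, which does not vanish identically (test it on a bump form and inspect the leading term); so its zero set is a proper complex-analytic subset of ${\mathbb{C}}^2$. To upgrade ``proper analytic'' to ``discrete'' I would decompose source and target into isotypic components for the action of the relevant maximal compact subgroup; as in \cite{sbon, sbonvec}, on each such component $\Atbb \lambda \nu \pm {i,j}$ acts by a scalar that is, up to a nowhere-vanishing holomorphic factor, a product of at most two affine-linear factors, one in $\lambda+\nu$ and one in $\lambda-\nu$. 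The common zero locus of all these scalars is then a locally finite union of transverse intersections of complex lines, hence discrete. (The precise list of these zeros is the content of Theorem \ref{thm:Avanish}.)

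The one delicate point in the continuation is to check that no pole of order $>1$ arises at the ``corner'' parameters where both $\lambda-\nu\in-{\mathbb{N}}$ and $\lambda+\nu-n\in-{\mathbb{N}}$; this is handled cleanly by the factorisation $\langle\Acal \lambda \nu \pm {i,j},\phi\rangle=\Gamma(\tfrac{\lambda+\nu-n+1+\kappa}{2})\,J$ with $J$ holomorphic in $\lambda+\nu$, so that the two Gamma factors of $a_{\pm}(\lambda,\nu)$ remove poles in the two essentially independent directions $\lambda+\nu$ and $\lambda-\nu$. The genuinely substantial input is the discreteness of the zero set: establishing it requires the explicit action of the normalized operator on each isotypic component, which is exactly where one invokes the structure theory of \cite{sbon, sbonvec}, and which this paper sharpens in Theorem \ref{thm:Avanish}.
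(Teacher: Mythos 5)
The paper does not prove this statement; it records it as a \emph{Fact} and cites \cite{sbon,sbonvec}, so there is no in-paper argument for me to compare against. Given that, your polar-coordinates argument is essentially the method used in those references (in \cite{sbon} for the scalar case, Chapters~6--7): you decouple the two directions $\lambda\pm\nu$ by writing the pairing $\langle\Acal \lambda \nu \pm {i,j},\phi\rangle$ as the Mellin transform in $\rho$ of a Riesz-type integral on $S^{n-1}$ in $\lambda+\nu$, normalize away the $\lambda+\nu$ poles with the first Gamma factor, and then use the parity of $M(x)=\pr ij\circ\sigma^{(i)}(\psi(x))$ under $x\mapsto -x$ together with $|\omega_n|^{s}(\operatorname{sgn}\omega_n)^\kappa$ to see that the Mellin poles in $\lambda-\nu$ land exactly on the zeros of $\Gamma(\frac{\lambda-\nu+\kappa}{2})^{-1}$. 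The parity check is the only new ingredient needed to pass from the scalar kernel to the $\operatorname{Hom}$-valued one, and you do it correctly; the factorization also cleanly rules out higher-order poles at the corners $\lambda-\nu\in-{\mathbb N}$, $\lambda+\nu-n\in-{\mathbb N}$, which is the genuinely delicate part of showing the Gamma factors in \eqref{eqn:aln} suffice.

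One remark on the second assertion: the holomorphic-continuation argument by itself yields only that the zero set is a proper complex-analytic subset of ${\mathbb C}^2$, i.e.\ generically codimension one, and you correctly flag that upgrading to a discrete set forces you back into the explicit $K'$-isotypic analysis of \cite{sbon,sbonvec}. There is no shortcut here; the present paper's Theorem~\ref{thm:Avanish} is precisely the sharp form of the discreteness statement, and its proof uses Theorem~\ref{thm:153316a}, which in turn uses the holomorphy part of Fact~\ref{fact:holo} (via Lemma~\ref{lem:AIJ}) but not the discreteness. So there is no circularity in relying on \cite{sbon,sbonvec} for discreteness while the paper refines it, and your presentation is consistent with that logical structure.

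One minor technical point to tighten: for the reduction from the distribution-kernel pairing to the operator $f\mapsto{\operatorname{Rest}}_{x_n=0}\circ(\Atcal \lambda \nu \pm {i,j}\ast f)$, the kernel is not compactly supported, so continuity of $f\mapsto\Atcal \lambda \nu \pm {i,j}\ast f$ and the compatibility of restriction with holomorphic dependence should be justified by the standard estimates for convolution of a (parametrized, tempered) distribution against $f\in{\mathcal{E}}_c^i({\mathbb R}^n)$; you gesture at this but do not spell it out. This is routine and does not affect the correctness of the argument.
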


\begin{remark}
The Gamma factors 
 $a_{\pm}(\lambda,\nu)$ in \eqref{eqn:Aijn} 
 are chosen in an optimal way
 that there is no pole of $\Atbb \lambda \nu \pm {i,j}$
 and that the zeros are of codimension two in ${\mathbb{C}}^2$.  
We note that $a_+(\lambda, \nu)$ coincides with normalizing factor
 of the scalar-valued regular symmetry breaking operator
 ${\mathbb{A}}_{\lambda,\nu}$ in \cite[(7.8)]{sbon}
 when $i=j=0$ and $\kappa=0$.  
\end{remark}

\subsection{Residue formula of matrix-valued operators
 $\Abb \lambda \nu \pm {i,j}$}

The first factor
 $\Gamma(\frac{\lambda + \nu -n +1+ \kappa}{2})$
 of $a_{\varepsilon(\kappa)}(\lambda,\nu)$ in \eqref{eqn:aln}
 arises  from the normalization 
 of the distribution
 $|x_n|^{\lambda+\nu-n}(\operatorname{sgn}x_n)^{\kappa}$
 of one-variable,
 and the corresponding residue of $\Abb \lambda \nu {\pm} {i,j}$
 is easily obtained.  
On the other hand, 
 the second factor
 $\Gamma(\frac{\lambda -\nu +\kappa}{2})$ is more involved
 because it arises not only from the normalization 
of $(x_1^2+ \cdots + x_n^2)^{-\nu}$ 
but from the whole $\Acal \lambda \nu \pm {i,j}$.  
The main result of this article
 is to give a closed formula for the residues 
 of the operators $\Abb \lambda \nu \pm{i,j}$ 
 at the places of the poles
 of $\Gamma(\frac{\lambda -\nu +\kappa}{2})$ as follows:  
\begin{theorem}
[residue formula of $\Abb \lambda \nu \pm{i,j}$]
\label{thm:153316a}
Let $j=i-1$ or $i$, 
 and $\kappa \in \{0,1\}$.  
Suppose $\nu-\lambda=2m+\kappa$
 with $m \in {\mathbb{N}}$.  
Then we have
\begin{equation}
\label{eqn:resAvecC}
\Atbb \lambda \nu \kappa {i,j}
=
\frac{(-1)^{i-j+m+\kappa}\pi^{\frac{n-1}2}m!}{2^{2m-1+3\kappa}\Gamma(\nu+1)}
\Cbb \lambda \nu {i,j}.  
\end{equation}
\end{theorem}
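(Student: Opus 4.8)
The plan is to reduce the matrix-valued residue computation to two essentially scalar computations by exploiting the tensor-product structure ${\mathcal{E}}^i({\mathbb{R}}^n) \cong C^\infty({\mathbb{R}}^n) \otimes \Exterior^i({\mathbb{C}}^n)$ of the operators. The kernel $\Acal \lambda \nu \pm {i,j}(x)$ factors as the product of the scalar distribution $(x_1^2+\cdots+x_n^2)^{-\nu}|x_n|^{\lambda+\nu-n}(\operatorname{sgn}x_n)^\kappa$ with the (polynomial, $\lambda,\nu$-independent-up-to-scaling) matrix symbol $\pr i j \circ \sigma^{(i)}(\psi(x))$. When $\nu - \lambda = 2m+\kappa$, the line in ${\mathbb{C}}^2$ along which the residue is taken is precisely where the second Gamma factor $\Gamma(\frac{\lambda-\nu+\kappa}{2}) = \Gamma(-m)$ has a pole. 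First I would isolate this: restricting to the slice $\nu-\lambda = 2m+\kappa$, the distribution $(x_1^2+\cdots+x_n^2)^{-\nu}$ becomes, after multiplying by the right Gamma factor, a constant-coefficient differential operator of order $2m$ (a power of the Laplacian-type operator in the radial/transverse variables) applied to a delta-like distribution supported on $\{x_n=0\}$ — this is the standard mechanism by which $\Gamma(\frac{\lambda-\nu+\kappa}{2})^{-1} (\cdots)^{-\nu}$ degenerates to a differential operator when $\lambda-\nu \in -2{\mathbb{N}}-\kappa$.

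The key steps, in order, are: (i) recall from \cite{sbon, sbonvec} the precise normalization $\Atbb \lambda \nu \pm {i,j} = a_\pm(\lambda,\nu)\,{\operatorname{Rest}}_{x_n=0}\circ(\Atcal \lambda \nu \pm {i,j}\ast f)$ and write the kernel as scalar-times-matrix; (ii) perform the one-variable normalization of $|x_n|^{\lambda+\nu-n}(\operatorname{sgn}x_n)^\kappa$ using the first Gamma factor $\Gamma(\frac{\lambda+\nu-n+1+\kappa}{2})$ — on the residue line this contributes a concrete power of $2$ and a derivative $\partial_{x_n}^\kappa$-type term, which is the "easily obtained" part flagged in the text; (iii) handle the harder second factor: on the slice $\nu-\lambda=2m+\kappa$, identify $a_\pm(\lambda,\nu)\,(x_1^2+\cdots+x_n^2)^{-\nu}|x_n|^{\lambda+\nu-n}(\operatorname{sgn}x_n)^\kappa$ (as a tempered distribution in all $n$ variables, after convolution and restriction) with an explicit constant multiple of a $2m$-th order differential operator acting through the remaining variables, picking up the factor $\pi^{(n-1)/2}m!/(2^{2m-1+3\kappa}\Gamma(\nu+1))$ and the sign $(-1)^{m+\kappa}$; (iv) combine this with the matrix symbol $\pr i j\circ\sigma^{(i)}(\psi(x))$, evaluate the resulting finite-order differential operator on $\Exterior$-valued functions, and match it against the definition of $\Cbb \lambda \nu {i,j}$ (which is defined earlier in the paper as the relevant differential symmetry breaking operator), producing the remaining sign $(-1)^{i-j}$ from the interplay between $\iota_{\partial/\partial x_n}$, the projections $\pr i j$, and the action of $\sigma^{(i)}(\psi(x))$ near $x_n=0$.

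The main obstacle I expect is step (iii)–(iv): controlling how the matrix symbol $\sigma^{(i)}(\psi(x))$, which is singular at $x=0$ through $\psi(x) = I_n - 2x\,{}^t\!x/|x|^2$, interacts with the $2m$-th order differentiation coming from the degenerate $(\,\cdot\,)^{-\nu}$ factor. One cannot simply treat the symbol as smooth; instead one must either expand $\sigma^{(i)}(\psi(x))$ in spherical harmonics / homogeneous components and track which components survive after applying the transverse Laplacian powers and restricting to $x_n=0$, or use the $K$-equivariance (covariance under $O(n-1)$) to reduce the matrix identity to finitely many scalar identities indexed by the irreducible constituents of $\Exterior^i({\mathbb{C}}^n)|_{O(n-1)}$, each of which is a one-dimensional "multiplier" computation that can be pinned down by testing against a single well-chosen form (e.g. $dx_{i_1}\wedge\cdots\wedge dx_{i_k}$ with a Gaussian coefficient). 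The bookkeeping of constants — especially reconciling the power $2^{2m-1+3\kappa}$ and the appearance of $\Gamma(\nu+1)$ rather than $\Gamma(\lambda+1)$ — will require care, and I would verify it by specializing to $i=j=0$, $\kappa=0$, where the formula must collapse to the known scalar residue formula of ${\mathbb{A}}_{\lambda,\nu}$ from \cite[(7.8)]{sbon}.
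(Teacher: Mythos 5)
You have located the difficulty correctly, but the proposal stops exactly where the real work begins, and the device that makes the computation possible is absent. Your steps (i)--(iii) treat the scalar factor $|x|^{-2\nu}|x_n|^{\lambda+\nu-n}(\operatorname{sgn}x_n)^{\kappa}$ by itself; on the line $\nu-\lambda=2m+\kappa$ its normalization degenerates to a distribution supported at the origin, which is precisely where the matrix symbol $\pr{i}{j}\circ\sigma^{(i)}(\psi(x))$ is singular, so step (iv) (``combine with the matrix symbol'') cannot be done by multiplication, as you yourself note. The paper's resolution is neither a spherical-harmonic expansion nor an equivariance/multiplier argument, but a parameter shift: setting $g_{IJ}(x):=|x|^2\langle \pr{i}{j}\circ\sigma^{(i)}(\psi(x))e_I,e_J^{\vee}\rangle$, a genuine quadratic polynomial (Lemma \ref{lem:171274}), one has the exact identity $(\Atcal \lambda \nu {+}{i,j})_{IJ}=\tfrac{2}{\lambda-\nu-2}\, g_{IJ}\,\Atcal{\lambda-1}{\nu+1}{+}{}$, and a variant with $x_n g_{IJ}\,\Atcal{\lambda-2}{\nu+1}{+}{}$ for the $-$ case, valid on all of ${\mathbb{C}}^2$ by analytic continuation (Lemma \ref{lem:AIJ}). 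The scalar residue formula (Fact \ref{fact:resAC}) is then applied at the \emph{shifted} parameters, reducing the theorem to evaluating a quadratic polynomial times the Juhl kernel $\Ctcal{\lambda-1}{\nu+1}{}$, a distribution supported at the origin. That evaluation is the second missing ingredient: it is done in the Weyl algebra modulo the annihilator of $\delta$, via the Gegenbauer recurrences, which give $x_p\Ctcal{\lambda}{\nu}{}=-2\frac{\partial}{\partial x_p}\Ctcal{\lambda+1}{\nu-1}{}$ and $x_n\Ctcal{\lambda}{\nu}{}=-2\gamma(\lambda-\frac{n-1}{2},\nu-\lambda)\Ctcal{\lambda+1}{\nu}{}$ and their iterates (Proposition \ref{prop:152383}), followed by a case-by-case match with the $(I,J)$-components of $\Cbb{\lambda}{\nu}{i,j}$ from \cite{KKP}, yielding $g_{IJ}\,\Ctcal{\lambda-1}{\nu+1}{}=8(-1)^{i-j}(\Ccal{\lambda}{\nu}{i,j})_{IJ}$ (Proposition \ref{prop:153294}); for $\kappa=1$ one extra application of the $x_n$-identity produces the factor $(\lambda+\nu-n)$ cancelling the same factor in the denominator. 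This bookkeeping is the entire content of the constant $2^{2m-1+3\kappa}$, the sign $(-1)^{i-j+m+\kappa}$, and the identification with the three-term operator $\Cbb{\lambda}{\nu}{i,j}$, and nothing in your sketch produces it.

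Concerning your fallback strategies: the equivariance/multiplier idea presupposes both that the residue is a differential operator and that the space of differential symmetry breaking operators ${\mathcal{E}}^i({\mathbb{R}}^n)\to{\mathcal{E}}^j({\mathbb{R}}^{n-1})$ at the given $(\lambda,\nu)$ is one-dimensional, so that one test form determines the constant. The former does follow once the support of the kernel is shown to collapse, but the latter is delicate exactly at the parameters of interest (note $\Cbb{\lambda}{\nu}{i,j}$ itself vanishes at certain $(\lambda,\nu)$, cf.\ Lemma \ref{lem:Cij0}, and multiplicities can jump there); moreover, even where uniqueness holds, the method only gives the ratio to an operator whose explicit form you must already take from \cite{KKP}, and the nondegenerate test computation would have to be redone for each pair $(i,j)$ and each parity $\kappa$ --- in effect repeating the Weyl-algebra computation less systematically. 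So the overall architecture (reduce to the scalar residue formula; sanity check at $i=j=0$) agrees with the paper, but as it stands the argument has a genuine gap at steps (iii)--(iv).
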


Here $\Cbb \lambda \nu {i,j} \colon {\mathcal{E}}^{i}({\mathbb{R}}^{n}) \to {\mathcal{E}}^{j}({\mathbb{R}}^{n-1})$
 is a matrix-valued differential operator
 introduced in \cite{KKP}.  
See \cite{KP1} for instance, 
 for the definition of differential operators
 between two manifolds.  
To review the differential operator $\Cbb \lambda \nu {i,j}$,
 we begin with a scalar-valued differential operator
 $\Ctbb \lambda \nu {}$ 
 which we call {\it{Juhl's operator}} from \cite{Juhl}.  
For $l:=\nu-\lambda \in {\mathbb{N}}$, 
 we set $m:=[\frac l 2]$, 
 the largest integer
 that does not exceed $\frac l 2$, 
 and define
 $\Ctbb \lambda \nu {}:C^{\infty}({\mathbb{R}}^{n}) \to C^{\infty}({\mathbb{R}}^{n-1})$ by 
\begin{equation}
\label{eqn:Cln}
  \Ctbb \lambda \nu {}
  :=
  \operatorname{Rest}_{x_n=0}
  \circ
  \sum_{k=0}^{m}
  \frac{\prod_{j=1}^{m-k+1}
  (\nu - \frac{n-1}{2} -m+j)}
  {2^{2k-l}k! (l-2k)!}
  (\Delta_{{\mathbb{R}}^{n-1}})^{k}
  (\frac{\partial}{\partial x_n})^{l-2k}.   
\end{equation}

More generally, 
 the matrix-valued operator
$
   \Cbb \lambda \nu {i,j}\colon {\mathcal{E}}^{i}({\mathbb{R}}^{n})
                           \to {\mathcal{E}}^{j}({\mathbb{R}}^{n-1})
$
 is defined by the following formula \cite[(2.24) and (2.26)]{KKP}:
\begin{align*}
\Cbb \lambda \nu {i, i}
:=&
\Ctbb{\lambda+1}{\nu-1}{} d d^{\ast}
-
\gamma(\lambda - \frac n 2, \nu -\lambda) 
\Ctbb{\lambda}{\nu-1}{} d \iota_{\frac{\partial}{\partial x_n}}
+\frac 1 2(\nu-i)
\Ctbb{\lambda}{\nu}{}, 
\\
\Cbb \lambda \nu {i,i-1}
:=&
- \Ctbb{\lambda+1}{\nu-1}{} d d^{\ast}
 \iota_{\frac{\partial}{\partial x_n}}
-
\gamma(\lambda - \frac {n-1} 2, \nu -\lambda) 
\Ctbb{\lambda+1}{\nu}{} d^{\ast}
+
\frac 1 2
(\lambda+i-n)
\Ctbb{\lambda}{\nu}{}\iota_{\frac{\partial}{\partial x_n}}, 
\end{align*}
where the codifferential 
$
   d^{\ast} \colon {\mathcal{E}}^{j+1}({\mathbb{R}}^n) \to {\mathcal{E}}^j({\mathbb{R}}^n)
$
 is the formal adjoint
 of the exterior derivative 
$
   d \colon {\mathcal{E}}^j({\mathbb{R}}^n) \to {\mathcal{E}}^{j+1}({\mathbb{R}}^n)
$.  
The constant $\gamma(\mu,a) \in {\mathbb{C}}$
 is given for $\mu \in {\mathbb{C}}$ and $a \in {\mathbb{N}}$ by 
\begin{equation}
\gamma(\mu,a)
:=
\begin{cases}
1
&\text{if $a$ is odd, }
\\
\mu + \frac a 2
\qquad
&\text{if $a$ is even.  }
\end{cases}
\label{eqn:gamma}
\end{equation}

\vskip 1pc
\par\noindent
{\bf{Notation.}}\enspace
For two subsets $A$ and $B$ of a set,
 we write
\[
  A \setminus B:= \{a \in A: a \not\in B\}
\]
rather than the usual notation 
$A \backslash B$.  

\[
{\mathbb{N}} =\{0,1,2,\cdots\},
\quad
{\mathbb{N}}_+ =\{1,2,\cdots\}.  
\]

\vskip 1pc
\par\noindent
{\bf{Acknowlegements.}}\enspace
This work was partially supported
 by Grant-in-Aid for Scientific Research (A) (25247006), 
 Japan Society for the Promotion
 of Science.

\section{Symmetry breaking in conformal geometry}
\label{sec:perspective}

We discuss the operators $\Atbb \lambda \nu \pm {i,j}$ from two different viewpoints:
 representation theory of real reductive groups
 (Section \ref{subsec:introSBO})
 and conformal geometry (Section \ref{subsec:conf}).  
With these perspectives,
 we shall explain two applications
 of Theorem \ref{thm:153316a} in Section \ref{subsec:residue}.  
Logically, 
 the results of this section are not used for the proof of Theorem \ref{thm:153316a}.

\subsection{Symmetry breaking for reductive groups}
\label{subsec:introSBO}
In this subsection, 
 we discuss the operators $\Atbb \lambda \nu {\pm}{i,j}$ from 
 the viewpoint of representation theory 
 of real reductive Lie groups.

Let $\Pi$ be a continuous representation of a group $G$
 on a topological vector space.  
If $G'$ is a subgroup of $G$, 
 we may think of $\Pi$ as a representation of the subgroup $G'$, 
 which is called the {\it{restriction}} of $\Pi$,  
 to be denoted by $\Pi|_{G'}$.  
Let $\pi$ be another representation 
 of the subgroup $G'$.  
A {\it{symmetry breaking operator}}
 is a continuous linear map $\Pi \to \pi$
 that intertwines the actions
 of the subgroup $G'$.

The operators $\Atbb \lambda \nu {\pm}{i,j}$ arise 
 as symmetry breaking operators
 for the pair $(G,G')=(O(n+1,1),O(n,1))$ as follows.  
Let $P=M A N$ be a minimal parabolic subgroup of $G$.  
For $0 \le i \le n$, 
 $\delta \in \{0,1\}$, 
 and $\lambda \in {\mathbb{C}}$, 
 we take $\Pi$ to be the
 (unnormalized) parabolic induction 
 $I_{\delta}(i,\lambda)={\operatorname{Ind}}_P^G(\Exterior^i({\mathbb{C}}^n) \otimes {\operatorname{sgn}}^{\delta} \otimes {\mathbb{C}}_{\lambda})$, 
where $\Exterior^i({\mathbb{C}}^n) \otimes {\operatorname{sgn}}^{\delta} \otimes {\mathbb{C}}_{\lambda}$
 stands for an irreducible representation of $P$
 that extends the outer tensor product representation
 of $M A \simeq O(n) \times O(1) \times {\mathbb{R}}$
 with trivial $N$-action.  
Likewise, 
 we take $\pi$ to be the parabolic induction 
$
   J_{\varepsilon}(j,\nu)={\operatorname{Ind}}_{P'}^{G'}
   (\Exterior^j({\mathbb{C}}^{n-1}) \otimes {\operatorname{sgn}}^{\varepsilon} \otimes {\mathbb{C}}_{\nu})
$
{} from a minimal parabolic subgroup $P'$ of $G'$.  
By identifying the open Bruhat cell of $G/P$
 with ${\mathbb{R}}^n$ and that of $G'/P'$ with ${\mathbb{R}}^{n-1}$, 
 we can realize $\Pi$ and $\pi$
 in ${\mathcal{E}}^i({\mathbb{R}}^n)$ and ${\mathcal{E}}^j({\mathbb{R}}^{n-1})$
 as the \lq\lq{$N$-picture}\rq\rq\ of principal series representations, 
respectively.  
Then the operators
$
  \Atbb \lambda \nu {\mu}{i,j} \colon
  {\mathcal{E}}_c^i({\mathbb{R}}^n) \to {\mathcal{E}}^j({\mathbb{R}}^{n-1})
$
 in Fact \ref{fact:holo}
 gives rise to symmetry breaking operators, 
 to be denoted by the same symbol, 
\[
    \Atbb \lambda \nu {\mu}{i,j} \colon I_{\delta}(i,\lambda) \to J_{\varepsilon}(j,\nu)
\]
for $\mu=+$ ($\delta\equiv \varepsilon \mod 2$)
 and $\mu=-$ ($\delta \not \equiv \varepsilon \mod 2$).  
If 
\begin{equation}
 \text{$\nu -\lambda \not \in 2 {\mathbb{N}}$
 for $\delta \equiv \varepsilon$}
\quad
 \text{or}
\quad
\text{$\nu -\lambda \not \in 2 {\mathbb{N}}+1$
 for $\delta \not \equiv \varepsilon$}, 
\label{eqn:generic}
\end{equation}
then the support of the distribution kernel 
 $\Atcal \lambda \nu {\mu}{i,j}$ has an interior point,
 and the operator $\Atbb \lambda \nu {\mu}{i,j}$ is called
 a {\it{regular}} symmetry breaking operator
 (\cite[Def.~3.3]{sbon}).

The dimension of the space 
 ${\operatorname{Hom}}_{G'}(I_{\delta}(i,\lambda)|_{G'}, 
J_{\varepsilon}(j,\nu))$
 of symmetry breaking operators
 is uniformly bounded with respect to the parameters
 $(\lambda,\nu,\delta,\varepsilon)$
 by the general theory 
 (\cite{xKOfm}).  
Moreover,
 it is one-dimensional and spanned by $\Atbb \lambda \nu \mu {i,j}$
 if the generic condition \eqref{eqn:generic} on parameters is satisfied, 
 see \cite{KobayashiSpeh}.

The complete classification
 of ${\operatorname{Hom}}_{G'}(I_{\delta}(i,\lambda)|_{G'}, 
J_{\varepsilon}(j,\nu))$
 for general $\lambda, \nu \in {\mathbb{C}}$
 and $\delta, \varepsilon \in \{0,1\}$ is accomplished in \cite{sbonvec}, 
 where a part of the proof for the exhaustion uses the results
 of this article, 
 namely,
 the vanishing criterion of $\Atbb \lambda \nu \pm {i,j}$
 in Theorem \ref{thm:Avanish}.  
We also refer to \cite{KobayashiSpeh}
 for the dimension formula
 of ${\operatorname{Hom}}_{G'}(I_{\delta}(i,\lambda)|_{G'}, 
J_{\varepsilon}(j,\nu))$, 
 and to \cite{KKP} for the classification of those operators
 that are given by differential operators
 such as $\Cbb \lambda \nu {i,i-1}$ and $\Cbb \lambda \nu {i,i}$.

\subsection{Conformally covariant symmetry breaking operators}
\label{subsec:conf}
We begin with the general setting
 where $(X,g)$ is a Riemannian manifold,
 and $Y$ is a submanifold endowed with the metric tensor $g|_Y$.  
We set
\begin{align*}
{\operatorname{Conf}}(X)
:=&
\{\text{conformal diffeomorphisms of $(X,g)$}\}, 
\\
{\operatorname{Conf}}(X;Y)
:=&
\{\varphi \in {\operatorname{Conf}}(X):
\varphi (Y)=Y\}.  
\end{align*}
Then there is a natural family
 of representations
 $\varpi_{u,\delta}^{(i)}$ of ${\operatorname{Conf}}(X)$
 with parameters $u \in {\mathbb{C}}$
 and $\delta \in {\mathbb{Z}}/2{\mathbb{Z}}$
 on the space ${\mathcal{E}}^i(X)$
 of differential $i$-forms
 for $0 \le i \le \dim X$
 (\cite[(1.1)]{KKP}, see also \cite{KO1}).

Likewise,
 there is a natural family of representations 
 $\varpi_{v,\varepsilon}^{(j)}$
 of the subgroup ${\operatorname{Conf}}(X;Y)$
 on the space ${\mathcal{E}}^j(Y)$
 of differential $j$-forms
 on the submanifold $Y$
 for $0 \le j \le \dim Y$.

The general question is to construct
 and classify continuous linear maps
 ({\it{conformally covariant symmetry breaking operators}})
 ${\mathcal{E}}^i(X) \to {\mathcal{E}}^j(Y)$
 that intertwine the restriction 
 $\varpi_{u,\delta}^{(i)}|_{{\operatorname{Conf}}(X;Y)}$
 and the representation $\varpi_{v,\varepsilon}^{(j)}$
 of the subgroup ${\operatorname{Conf}}(X;Y)$ of ${\operatorname{Conf}}(X)$.  
The integral operators $\Atbb \lambda \nu {\pm}{i,j}$
 and the differential operators $\Cbb \lambda \nu {i,j}$
 are such operators
 for the model space $(X,Y)=(S^n, S^{n-1})$.  

In fact,
 ${\operatorname{Conf}}(X)$ is locally isomorphic to $G=O(n+1,1)$
 and ${\operatorname{Conf}}(X;Y)$ is to $G'=O(n,1)$
 if $(X,Y)=(S^n, S^{n-1})$.  
Then the conformal representation 
 $(\varpi_{u,\delta}^{(i)}, {\mathcal{E}}^i(S^n))$
 of the group ${\operatorname{Conf}}(S^n)$ may be identified
 with the \lq\lq{$K$-picture}\rq\rq\ of the principal series 
representation $I_{\delta}(i,\lambda)$
 of $G=O(n+1,1)$
 after some shift of parameters
 (see \cite[Prop.~2.3]{KKP}).  
Similarly,
 the conformal representation
 $(\varpi_{v,\varepsilon}^{(j)}, {\mathcal{E}}^j(S^{n-1}))$
 of the subgroup ${\operatorname{Conf}}(S^n;S^{n-1})$
 is identified with $J_{\varepsilon}(j,\nu)$.  
Thus the integral operator $\Atbb \lambda \nu \pm {i,j}$ 
 and its holomorphic continuation give rise to conformally covariant,
 symmetry breaking operators
 ${\mathcal{E}}^i(S^n) \to {\mathcal{E}}^j(S^{n-1})$.

\subsection{Applications}
\label{subsec:appl}
Theorem \ref{thm:153316a} in Section \ref{subsec:residue}
 leads us to two applications:
\begin{enumerate}
\item[(1)]
(a necessary and sufficient condition for $\Atbb \lambda \nu {\pm} {i,j}$
 to vanish)
\enspace
The matrix-valued symmetry breaking operators
 $\Atbb \lambda \nu {\pm} {i,j}$ are defined as the holomorphic continuation
 of integral operators,
 and it is nontrivial to find the precise location
 of the zeros.  
By the residue formula
 (Theorem \ref{thm:153316a}), 
 we can determine the zeros of $\Atbb \lambda \nu {\pm} {i,j}$
 (see Theorem \ref{thm:Avanish} and Remark \ref{rem:Avanish}).  
This plays a crucial role
 in the classification problem
 of symmetry breaking operators
 (\cite{sbonvec}).  

\item[{\rm{(2)}}]
(another approach to construct conformally covariant differential operators)
\enspace
It is easy to see 
 that the integral transforms
 (and its analytic continuation)
 $\Atbb \lambda \nu \pm {i,j} \colon I_{\delta}(i,\lambda) \to J_{\varepsilon}(j,\nu)$
 respect the actions
 of the subgroup $G'=O(n,1)$ of $G=O(n+1,1)$
 (\cite{sbon, sbonvec}).  
Equivalently,
 $\Atbb \lambda \nu \pm {i,j}$ give conformally covariant,
 symmetry breaking operators from 
 ${\mathcal{E}}^i(S^n)$ to ${\mathcal{E}}^j(S^{n-1})$,
or from ${\mathcal{E}}^i({\mathbb{R}}^n)$ to ${\mathcal{E}}^j({\mathbb{R}}^{n-1})$, 
 as is seen in Section \ref{subsec:conf}.  
Thus the residue formula gives 
 a new proof
 that $\Cbb \lambda \nu {i,j}$ ($j=i,i-1$)
 is a conformally differential symmetry breaking operator from 
 ${\mathcal{E}}^i({\mathbb{R}}^n)$
 to ${\mathcal{E}}^j({\mathbb{R}}^{n-1})$, 
 for which the construction and classification were given in \cite{KKP}
 by using the F-method
 \cite{KOSS, KP1}.  
Indeed,
 the argument in this article does not use the F-method
 on which the main argument in \cite{KKP} relies.  
Since $\Cbb \lambda \nu {i,j}$ is recovered from its matrix coefficients
 by an elementary computation
 in differential geometry
 ({\it{cf}}.~Facts \ref{fact:153284} and \ref{fact:161478}), 
 the residue formula \eqref{eqn:resAvecC} in Theorem \ref{thm:153316a}
 reconstructs the conformally covariant,
 differential symmetry breaking operators $\Cbb \lambda \nu {i,j}$.  
\end{enumerate}

\begin{remark}
(scalar-valued case)
In the case where $i=j=0$, 
 the matrix-valued symmetry breaking operator 
 $\Cbb \lambda \nu {i,j}$ reduces to a scalar-valued one
 (Juhl's operator), 
 and we have
\[
  \Cbb \lambda \nu {0,0} = \frac 12 \nu \Ctbb \lambda \nu {}, 
\]
see \cite[p.~23]{KKP}.  
Thus Theorem \ref{thm:153316a} in this case
 coincides with \cite[Thm.~12.2 (2)]{sbon}, 
 see Fact \ref{fact:resAC}.  
Actually,
 our proof of Theorem \ref{thm:153316a} uses
 the results in the scalar case.  
\end{remark}

\section{Some identities in the Weyl algebra}
\label{subsec:Walg}

A key technique in our proof of the matrix-valued residue formula
 (Theorem \ref{thm:153316a}) relies on an algebraic manipulation
 in the Weyl algebra, 
 for which we give a basic set-up in this section.  
We shall develop it 
 for Juhl's operators in Section \ref{subsec:Weylalg}.

\vskip 1pc
Let ${\mathcal{D}}'({\mathbb{R}}^n)$ be the space 
  of distributions on $\mathbb{R}^n$,
 and ${\mathcal{D}}_{\{0\}}'(\mathbb{R}^n)$ the subspace
 consisting of distributions
 supported at the origin.  
Then the Weyl algebra 
\[
  {\mathbb{C}}[x,\frac{\partial}{\partial x}]
  \equiv
  {\mathbb{C}}[x_1, \cdots, x_n, \frac{\partial}{\partial x_1}, \cdots, \frac{\partial}{\partial x_n}]
\]
 acts naturally on ${\mathcal{D}}'({\mathbb{R}}^n)$
 and leaves the subspace ${\mathcal{D}}_{\{0\}}'(\mathbb{R}^n)$ invariant.  
Let ${\mathcal{J}}$ be 
the annihilator 
 of the Dirac delta function $\delta(x)=\delta(x_1, \cdots, x_n)$, 
 namely,
 the kernel of the following ${\mathbb{C}}[x,\frac{\partial}{\partial x}]$-homomorphism:
\begin{equation}
\label{eqn:Psi}
  \Psi:
  {\mathbb{C}}[x, \frac {\partial}{\partial x}]
  \to 
  {\mathcal{D}}'(\mathbb{R}^n), 
  \qquad
  P \mapsto P \delta.  
\end{equation}
Then ${\mathcal{J}}$ is the left ideal
 generated by the coordinate functions $x_1,$ $\cdots,$ $x_n$, 
 and $\Psi$ induces an isomorphism 
 of ${\mathbb{C}}[x, \frac {\partial}{\partial x}]$-modules.  
\begin{equation}
\label{eqn:WalgD}
\overline \Psi:
{\mathbb{C}}[x,\frac{\partial}{\partial x}]/ {\mathcal{J}}
\overset \sim \to 
{\mathcal{D}}_{\{0\}}'(\mathbb{R}^n).  
\end{equation}

Our strategy is to reduce (rather complicated) computations 
 in ${\mathcal{D}}_{\{0\}}'(\mathbb{R}^n)$ 
 to simpler algebraic ones 
 via the isomorphism \eqref{eqn:WalgD}
 by preparing systematically certain identities
 in the Weyl algebra ${\mathbb{C}}[x,\frac{\partial}{\partial x}]$ modulo 
${\mathcal{J}}$
 (see Lemmas \ref{lem:152327}, \ref{lem:170182} and \ref{lem:1532100}).

Before entering this part,
 we give the following observation:
\begin{lemma}
\label{lem:160205}
If $P$ is a differential operator on $\mathbb{R}^n$
 with constant coefficients, 
 then 
\begin{equation}
\label{eqn:160205}
  \Psi(P) \ast f= Pf
\qquad
\text{for all $f \in C^{\infty}(\mathbb{R}^n)$}.  
\end{equation}
\end{lemma}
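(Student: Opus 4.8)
This is the familiar fact that convolution by $P\delta$ is precisely the constant-coefficient operator $P$, and I would prove it in three short steps, the first being a reduction. Since both sides of \eqref{eqn:160205} are ${\mathbb{C}}$-linear in $P$, it suffices to treat a single monomial $P=\partial^{\alpha}:=\frac{\partial^{|\alpha|}}{\partial x_1^{\alpha_1}\cdots\partial x_n^{\alpha_n}}$ for a multi-index $\alpha\in{\mathbb{N}}^n$. The distribution $\Psi(P)=P\delta$ is supported at the origin, hence compactly supported, so $\Psi(P)\ast f$ is well defined and smooth for \emph{every} $f\in C^{\infty}({\mathbb{R}}^n)$, with no support hypothesis needed on $f$.

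I would then invoke the two elementary properties of the convolution of a compactly supported distribution $T$ with a smooth function: $\delta\ast f=f$, and the fact that differentiation passes through the convolution, $\frac{\partial}{\partial x_k}(T\ast f)=\bigl(\frac{\partial}{\partial x_k}T\bigr)\ast f$ for $1\le k\le n$. Iterating the second identity on $T=\delta$ and then applying the first gives
\[
  \Psi(P)\ast f=(\partial^{\alpha}\delta)\ast f=\partial^{\alpha}(\delta\ast f)=\partial^{\alpha}f=Pf ,
\]
which is \eqref{eqn:160205}. Alternatively, the same computation can be carried out directly from the definition $(\Psi(P)\ast f)(x)=\langle(P\delta)_y,\,f(x-y)\rangle$, transposing $\partial_y^{\alpha}$ onto $f(x-y)$ and using $\partial_y^{\alpha}\bigl(f(x-y)\bigr)=(-1)^{|\alpha|}(\partial^{\alpha}f)(x-y)$; the two factors of $(-1)^{|\alpha|}$ cancel and the pairing against $\delta_y$ evaluates at $y=0$, leaving $(Pf)(x)$.

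There is no genuine difficulty in this lemma; the only two points deserving a word are that $P\delta$ has compact support — which is what makes $\Psi(P)\ast f$ meaningful for all smooth, not necessarily compactly supported, $f$ — and the cancellation of the sign $(-1)^{|\alpha|}$ in the direct computation. The statement is recorded at this point so that, once residues of the kernels $\Acal\lambda\nu\pm{i,j}$ have been identified as elements of ${\mathcal{D}}_{\{0\}}'({\mathbb{R}}^n)$, the corresponding operators can be computed with their Weyl-algebra representatives modulo ${\mathcal{J}}$ via the isomorphism \eqref{eqn:WalgD} and then read off as differential operators through \eqref{eqn:160205}.
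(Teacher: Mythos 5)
Your proposal is correct, and its second variant (pairing $(P\delta)_y$ against $f(x-y)$, transposing the derivatives, and cancelling the two signs $(-1)^{|\alpha|}$) is exactly the computation the paper gives, with the reduction to monomials and the standard identities $\delta\ast f=f$, $\partial_k(T\ast f)=(\partial_k T)\ast f$ being only a harmless repackaging of the same argument. No gaps.
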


\begin{proof}
Let $\alpha=(\alpha_1,\cdots, \alpha_n)$ be a multi-index, 
 and we write $P=\sum_{\alpha} a_{\alpha} \frac{\partial^{|\alpha|}}{\partial x^{\alpha}}$.  
Then 
\begin{align*}
\Psi (P) \ast f(x)
=& \int \sum_{\alpha} a_{\alpha} \frac{\partial^{|\alpha|} \delta}{\partial y^{\alpha}}(y) f(x-y) d y
\\
=& \sum_{\alpha} a_{\alpha} \int \delta(y) (-1)^{|\alpha|}
  \frac{\partial^{|\alpha|}}{\partial y^{\alpha}}f(x-y) d y
\\
=& \sum_{\alpha} a_{\alpha} 
   \frac{\partial^{|\alpha|}}{\partial x^{\alpha}}f(x).  
\end{align*}
Hence $\Psi (P) \ast f(x)= Pf$.  
\end{proof}

We set
\[
  \Delta \equiv \Delta_{\mathbb{R}^n}= \frac{\partial^2}{\partial x_1^2}
          + \cdots +
          \frac{\partial^2}{\partial x_n^2}.  
\]
\begin{lemma}
\label{lem:152327}
Let $k \in {\mathbb{N}}$, 
 and $1 \le p, q \le n$ with $p \ne q$.  
Then the following identities hold
 in the Weyl algebra ${\mathbb{C}}[x, \frac{\partial}{\partial x}]$
 modulo ${\mathcal{J}}$.  
\begin{enumerate}
\item[{\rm{(1)}}]
$x_p \Delta^k 
  \equiv
 -2 k \frac{\partial}{\partial x_p}
 \Delta^{k-1} \mod {\mathcal{J}}.  
$
\item[{\rm{(2)}}]
$x_p x_q \Delta^k 
 \equiv
 4k(k-1) \frac{\partial^2}{\partial x_p \partial x_q}
 \Delta^{k-2} \mod {\mathcal{J}}. 
$
\item[{\rm{(3)}}]
$x_p^2 \Delta^k 
 \equiv
 4k(k-1) \frac{\partial^2}{\partial x_p^2}
 \Delta^{k-2} 
 +
 2k \Delta^{k-1} \mod {\mathcal{J}}.  
$
\end{enumerate}
\end{lemma}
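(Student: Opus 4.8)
The plan is to work modulo the left ideal $\mathcal{J} = (x_1,\dots,x_n)$ and exploit the fundamental commutation relation $\frac{\partial}{\partial x_p} x_p = x_p \frac{\partial}{\partial x_p} + 1$ to ``slide'' the coordinate functions to the right, where they annihilate everything mod $\mathcal{J}$. The essential observation is that for a constant-coefficient operator $Q$, one has $x_p Q \equiv [x_p, Q] \pmod{\mathcal{J}}$ since $x_p Q = Q x_p + [x_p, Q]$ and $Q x_p \in \mathcal{J}$ (because $\mathcal{J}$ is a \emph{left} ideal). So the whole computation reduces to computing iterated commutators of $x_p$ (and $x_q$, $x_p^2$) against powers of the Laplacian.

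First I would record the base commutator: $[x_p, \Delta] = -2\frac{\partial}{\partial x_p}$, which follows from $[x_p, \frac{\partial^2}{\partial x_r^2}] = -2\delta_{pr}\frac{\partial}{\partial x_p}$. For part (1), since $x_p$ commutes with $\frac{\partial}{\partial x_q}$ for all $q$, the operator $\frac{\partial}{\partial x_p}$ appearing in $[x_p,\Delta]$ commutes with $\Delta$, so the Leibniz-type expansion of $[x_p, \Delta^k]$ telescopes cleanly: $[x_p, \Delta^k] = \sum_{r=0}^{k-1} \Delta^r [x_p,\Delta]\Delta^{k-1-r} = -2k\frac{\partial}{\partial x_p}\Delta^{k-1}$, giving (1) mod $\mathcal{J}$. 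For (2), I would apply (1) twice: $x_p x_q \Delta^k \equiv x_p(-2k\frac{\partial}{\partial x_q}\Delta^{k-1}) \pmod{\mathcal{J}}$; but $x_p$ commutes with $\frac{\partial}{\partial x_q}$ (as $p\ne q$), so this equals $-2k\frac{\partial}{\partial x_q}(x_p\Delta^{k-1}) \equiv -2k\frac{\partial}{\partial x_q}\cdot(-2(k-1)\frac{\partial}{\partial x_p}\Delta^{k-2}) = 4k(k-1)\frac{\partial^2}{\partial x_p\partial x_q}\Delta^{k-2} \pmod{\mathcal{J}}$, using (1) again in degree $k-1$ — here I must be careful that $x_p \Delta^{k-1} \equiv -2(k-1)\frac{\partial}{\partial x_p}\Delta^{k-2}$ holds as stated (and trivially for $k=1$ the expression vanishes, consistent with the $k(k-1)$ factor).

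For (3) the extra term appears precisely because $x_p$ does \emph{not} commute with the $\frac{\partial}{\partial x_p}$ produced by the first application of (1). I would write $x_p^2\Delta^k \equiv x_p(-2k\frac{\partial}{\partial x_p}\Delta^{k-1}) \pmod{\mathcal{J}}$ and then commute: $x_p\frac{\partial}{\partial x_p} = \frac{\partial}{\partial x_p}x_p - 1$, so $x_p(-2k\frac{\partial}{\partial x_p}\Delta^{k-1}) = -2k\frac{\partial}{\partial x_p}(x_p\Delta^{k-1}) + 2k\Delta^{k-1}$. Applying (1) in degree $k-1$ to the first term yields $-2k\frac{\partial}{\partial x_p}\cdot(-2(k-1)\frac{\partial}{\partial x_p}\Delta^{k-2}) = 4k(k-1)\frac{\partial^2}{\partial x_p^2}\Delta^{k-2}$, and the leftover $+2k\Delta^{k-1}$ is exactly the stated correction. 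I expect the main obstacle to be bookkeeping rather than anything conceptual: one must handle the degenerate cases $k=0$ and $k=1$ consistently (where factors like $k(k-1)$ force the double-derivative terms to vanish, so the negative powers of $\Delta$ never actually occur), and one must be scrupulous that every manipulation is the commutator identity modulo the \emph{left} ideal $\mathcal{J}$ — i.e.\ that each time a coordinate function ends up on the far right it may legitimately be dropped. A clean way to organize this is to prove (1) by induction on $k$ first, then derive (2) and (3) from (1) by a single further commutation step each.
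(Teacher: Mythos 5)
Your argument is correct and follows essentially the same route as the paper: both rest on the base commutator $[x_p,\Delta]=-2\frac{\partial}{\partial x_p}$, iterate it to handle $\Delta^k$, and use that $\mathcal{J}$ is a \emph{left} ideal to discard terms with a coordinate function on the far right. The only difference is organizational — the paper records the exact commutator identities $[x_p,\Delta^k]$, $[x_px_q,\Delta^k]$, $[x_p^2,\Delta^k]$ (via induction and $[AB,C]=A[B,C]+[A,C]B$) and then reduces mod $\mathcal{J}$, whereas you work modulo $\mathcal{J}$ throughout by applying (1) twice, which is equivalent.
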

\begin{proof}
We denote by $[P,Q]:=P Q - Q P$
 the bracket of $P, Q \in {\mathbb{C}}[x, \frac{\partial}{\partial x}]$
 as usual.  
Then the assertions are derived from
 the following commutation relations:
\begin{align}
\label{eqn:152376}
   [x_p, \Delta^k]
   =&
   -2k \frac{\partial}{\partial x_p} \Delta^{k-1}, 
\\
\label{eqn:152386}
[x_p x_q, \Delta^k]
   =&
   4k(k-1) \frac{\partial^2}{\partial x_p \partial x_q} \Delta^{k-2}
   -2k
   (\frac{\partial}{\partial x_p} \Delta^{k-1} x_q
    +
    \frac{\partial}{\partial x_q} \Delta^{k-1} x_p), 
\\
\label{eqn:1523110}
[x_p^2, \Delta^k]
   =&
   4k(k-1) \frac{\partial^2}{\partial x_p^2} \Delta^{k-2}
   +2k \Delta^{k-1} 
   -4k
    \frac{\partial}{\partial x_p} \Delta^{k-1} x_p.  
\end{align}
The first equation \eqref{eqn:152376}
 is verified easily by induction on $k$.  
In turn, 
 the second and third ones \eqref{eqn:152386} and \eqref{eqn:1523110}
 follow from the iterated use
of \eqref{eqn:152376} 
 and from the identity 
 $[AB, C] = A[B, C] + [A, C]B$.  
\end{proof}

\section{Residue formul{\ae} of the matrix-valued Knapp--Stein intertwining operators}
\label{subsec:resBranson}
In this section we consider a baby-case
 ({\it{i.e.}} $G=G'$ case), 
 and apply the machinery in the previous section 
 to find a residue formula 
 for the matrix-valued Knapp--Stein intertwining operator
 $\Ttbb \lambda {n-\lambda}i$.  
Since the principal series representation $I_{\delta}(i,\lambda)$ is realized
 in the space ${\mathcal{E}}^i({\mathbb{R}}^n)$
 of differential forms on ${\mathbb{R}}^n$, 
 the residue formula should be given 
 by some familiar operators
 known in differential geometry.  
Actually, 
 we shall see 
 that the residue formula is proportional to Branson's conformal covariant differential operator
 \cite{B82}.

Our proof here illustrates an idea
 of the more complicated argument in later sections, 
 where we give a proof of our main results on symmetry breaking operators
 $\Atbb \lambda \nu {\pm}{i,j}$.

\subsection{Matrix-valued Knapp--Stein intertwining operators}
Suppose $0 \le i \le n$.  
For ${\operatorname{Re}}\lambda \gg 0$, 
 we define $\Ttcall \lambda i(x)$ to be
 an ${\operatorname{End}}_{\mathbb{C}}(\Exterior^i(\mathbb{C}^n))$-valued, 
 locally integrable function on ${\mathbb{R}}^n$
 by the following formula
\begin{equation}
\label{eqn:exrep}
   \Ttcall \lambda i (x)
:=
\frac{1}{\Gamma(\lambda-\frac n2)}|x|^{2(\lambda-n)}\sigma^{(i)}(\psi_n(x)).  
\end{equation}
The Knapp--Stein intertwining operator \cite{KS}
 between principal series representations
 of $G=O(n+1,1)$, 
\[
   \Ttbb \lambda {n-\lambda}i
   \colon
   I_{\delta}(i,\lambda) \to I_{\delta}(i,n-\lambda)
\qquad
\text{for $\delta \in \{0,1\}$, }
\]
is defined in the $N$-picture as the analytic continuation
 of the convolution map
\[
   {\mathcal{E}}_c^i({\mathbb{R}}^n) \to {\mathcal{E}}^i({\mathbb{R}}^n), 
\quad
   f \mapsto \Ttcall \lambda i \ast f.  
\]

Next, 
we recall that Branson's conformally covariant differential operator
(see \cite{B82})
\[
   \Dcall {2l}{(i)} \colon
   {\mathcal{E}}^i({\mathbb{R}}^n) \to {\mathcal{E}}^i({\mathbb{R}}^n)
\]
 is given by 
\begin{equation}
\label{eqn:Branson}
  \Dcall {2l}{(i)}
  :=
\begin{cases}
  -(\frac n 2-i) \Delta^l + l(d^{\ast}d -d d^{\ast})\Delta^{l-1}
\quad&\text{for $l \in {\mathbb{N}}_+$}, 
\\
  -(\frac n 2-i) {\operatorname{id}}
\quad
&\text{for $l =0$}, 
\end{cases}
\end{equation}
where $\Delta=-(d d^{\ast} + d^{\ast} d)$
 is the Laplace--Bertrami operator
 acting on differential forms.  
We adopt the normalization of $\Dcall {2l}{(i)}$
 given by \cite[(12.1)]{KKP}.  
In particular, 
 $\Dcall {2l}{(i)}$ vanishes 
 when $i=\frac n 2$ and $l=0$.  
The conformally covariant property of Branson's operator 
 is reformulated 
 as the intertwining property 
 between two principal series representations
 in their $N$-picture:
\[
    \Dcall {2l}{(i)} \colon I_{\delta}(i,\frac n 2-l) \to I_{\delta}(i,\frac n 2+l)
\quad\text{for $\delta \in \{0,1\}$.  }
\]
See \cite[Thm.~12.2]{KKP}
 for instance, for the classification
 of such operators.

Here is a relationship between 
 ${\operatorname{End}}_{\mathbb{C}}(\Exterior^i({\mathbb{C}}^n))$-valued
 Knapp--Stein intertwining operators
 $\Ttbb \lambda{n-\lambda} i$
 and Branson's conformally covariant operators $\Dcall {2l}{(i)}$:
\begin{theorem}
\label{thm:170183}
Let $0 \le i \le n$
 and $\lambda \in {\mathbb{C}}$.  
\begin{enumerate}
\item[{\rm{(1)}}]
The matrix-valued Knapp--Stein intertwining operator 
$\Ttbb \lambda{n-\lambda}i$ reduces to a differential operator
 if and only if $n-2\lambda \in 2{\mathbb{N}}$.  
\item[{\rm{(2)}}]
Suppose $l \in {\mathbb{N}}_+$.  
Then we have 
\begin{equation}
\label{eqn:KSres}
   \Ttbb \lambda{n-\lambda}i|_{\lambda=\frac n 2 -l}
   =
   \frac{(-1)^{l+1} \pi^{\frac n2}}{2^{2l} \Gamma(\frac n 2 + l + 1)} \Dcall {2l}{(i)}.  
\end{equation}
\end{enumerate}
\end{theorem}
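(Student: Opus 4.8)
The plan is to compute the analytic continuation of the convolution kernel $\Ttcall \lambda i(x) = \Gamma(\lambda-\frac n2)^{-1} |x|^{2(\lambda-n)} \sigma^{(i)}(\psi_n(x))$ at the points $\lambda = \frac n2 - l$ by reducing everything to the Weyl-algebra identities of Section \ref{subsec:Walg}. The first observation is that $\sigma^{(i)}(\psi_n(x))$ is a polynomial of degree $2i$ in the rescaled variable $x/|x|$, so $|x|^{2(\lambda-n)}\sigma^{(i)}(\psi_n(x))$ is a finite sum of terms of the form $|x|^{2(\lambda-n)-2i} \cdot (\text{monomial of degree }2i\text{ in }x)$ tensored with a fixed endomorphism of $\Exterior^i(\mathbb{C}^n)$. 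Recall that $\sigma^{(i)}(\psi_n(x)) = \sigma^{(i)}(I_n - \frac{2x{}^t x}{|x|^2})$, and $\psi_n(x)$ acts on $\Exterior^i$ as $\Exterior^i$ of the reflection; concretely it can be written as $\operatorname{id} - \frac{2}{|x|^2}(\varepsilon_x \iota_x + \iota_x \varepsilon_x)$-type expression where $\varepsilon_x, \iota_x$ are exterior/interior multiplication by $x$, and squaring gives back the identity. So the kernel is a sum over a few "form-degree shifts" of scalar Riesz-type distributions $|x|^{2\mu}$ times low-degree polynomials, and the pole structure at $\lambda = \frac n2 - l$ comes entirely from the classical formula for the analytic continuation of $\Gamma(\lambda-\frac n2)^{-1}|x|^{2(\lambda-n)}$, whose value at $\lambda = \frac n2 - l$ is (a known constant times) $\Delta^l \delta$. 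This is exactly the scalar statement, and I would invoke it as the base case.

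Next I would carry out the algebraic bookkeeping. Write $\Ttcall \lambda i = \Gamma(\lambda-\frac n2)^{-1}\sum_{a} |x|^{2(\lambda-n)} p_a(x) \otimes T_a$ where each $p_a$ is a monomial $x_{i_1}\cdots x_{i_{2i}}$ and $T_a \in \operatorname{End}(\Exterior^i)$. For $\operatorname{Re}\lambda$ large we have $|x|^{2(\lambda-n)}p_a(x) = |x|^{2(\lambda'-n)}$ with $\lambda' = \lambda + (\text{deg adjustments})$ after absorbing the monomial into a shifted power of $|x|^2$ only when $p_a$ is itself a power of $|x|^2$ — in general we cannot, so instead I would first do the analytic continuation of the scalar piece $\Gamma(\lambda-\frac n2)^{-1}|x|^{2(\lambda-n)}$ to get a multiple of $\Delta^{l} \delta$ at $\lambda = \frac n2 - l$ (with the constant $(-1)^l \pi^{n/2} 2^{-2l}/\Gamma(\frac n2 + l)$ or similar, to be pinned down), then multiply by the polynomial $p_a(x)$ on the level of distributions, and finally use Lemma \ref{lem:152327} to rewrite $x_{i_1}\cdots x_{i_{2i}} \Delta^l \delta$ modulo $\mathcal J$ as a differential operator applied to $\delta$. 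The point of Lemma \ref{lem:152327} is precisely that $x_p \Delta^k \equiv -2k \frac{\partial}{\partial x_p}\Delta^{k-1}$ etc., so iterating lowers the power of $\Delta$ and introduces derivatives: a degree-$2i$ monomial against $\Delta^l$ produces terms with $\Delta^{l-i}$ times a degree-$2i$ constant-coefficient operator, plus lower-order corrections. Reassembling these, tensored with the $T_a$, should organize (after using $\psi_n(x)^2 = I$ to kill the cross terms) into precisely $\Delta^{l-1}(d^\ast d - d d^\ast)$ and $\Delta^l$ with the coefficients $-(\frac n2 - i)$ and $l$ appearing in \eqref{eqn:Branson}. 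Then apply Lemma \ref{lem:160205} to convert "convolution with $P\delta$" into "apply the constant-coefficient operator $P$", which gives that $\Ttbb \lambda{n-\lambda}i$ at $\lambda = \frac n2 - l$ is a differential operator, proving the "if" direction of (1) and simultaneously (2). For the "only if" direction of (1), I would note that when $n - 2\lambda \notin 2\mathbb N$ the scalar Riesz kernel $|x|^{2(\lambda-n)}$ has a genuine locally integrable (non-distributional-at-$0$) representative away from the pole lattice, so the kernel of $\Ttbb \lambda{n-\lambda}i$ is not supported at the origin and the operator is not differential — equivalently, cite the known scalar Knapp-Stein statement, since the matrix kernel's support equals that of its scalar factor.

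The identification of the matrix structure with $d^\ast d - d d^\ast$ is the main obstacle. Collapsing the degree-$2i$ monomial-times-$\Delta^l$ expressions via Lemma \ref{lem:152327} produces many terms, and recognizing that their sum, weighted by the exterior-algebra endomorphisms coming from $\sigma^{(i)}(\psi_n(x))$, equals exactly Branson's operator requires an identity in $\operatorname{End}(\Exterior^i(\mathbb C^n)) \otimes \mathbb C[\partial/\partial x]$ relating $\sum_{p,q} \frac{\partial^2}{\partial x_p \partial x_q} \otimes (\varepsilon_p \iota_q + \iota_p \varepsilon_q)$ to $d^\ast d - d d^\ast$ (this is essentially the Weitzenböck-type identity $dd^\ast + d^\ast d = -\Delta + \text{curvature}$, which in flat space is just the statement that the form Laplacian is the scalar Laplacian on each component, but we need the \emph{skew} combination $d^\ast d - dd^\ast$, which is the genuinely form-degree-dependent piece). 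I expect to handle this by a direct check on a basis $dx_J$, or by observing that both sides are $O(n)$-equivariant endomorphism-valued operators of the same bidegree and matching them on one or two representative forms. Once that identity is in hand, the constant in \eqref{eqn:KSres} is forced by comparing with the scalar case $i=0$ (where $d^\ast d - dd^\ast$ acts as $0$ on $0$-forms, wait—rather $i = 0$ gives $\Dcall{2l}{(0)} = -\frac n2 \Delta^l$), reducing the constant determination to the already-known scalar Knapp-Stein residue formula.
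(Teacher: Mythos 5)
Your overall strategy—reduce the matrix kernel to polynomial multiples of the scalar Riesz kernel, evaluate the residue at $\lambda=\frac n2-l$ via Fact~\ref{fact:Rieszres}, then translate $x^{\alpha}\Delta^{k}\delta$ into a differential operator applied to $\delta$ using Lemma~\ref{lem:152327}, and match against the explicit formula for $\Dcall{2l}{(i)}$—is exactly the paper's plan, and your idea of pinning the constant by the $i=0$ case is also sound. However, there is a concrete gap: you oscillate on the degree of the polynomial factor in the kernel, and the wrong choice derails the $\Delta$-power bookkeeping.

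You first write the kernel as $|x|^{2(\lambda-n)-2i}$ times degree-$2i$ monomials $p_a(x)$, and later say that "a degree-$2i$ monomial against $\Delta^l$ produces terms with $\Delta^{l-i}$." But Branson's operator $\Dcall{2l}{(i)}$ involves $\Delta^l$ and $\Delta^{l-1}$, not $\Delta^{l-i}$, so that accounting cannot be right. The crucial observation you need—which you almost notice when you write the $\varepsilon_x,\iota_x$ expression—is that $\psi_n(x)=I_n-2x\,{}^t\!x/|x|^2$ is a rank-one perturbation of the identity, hence $\sigma^{(i)}(\psi_n(x))=\operatorname{id}-\tfrac{2}{|x|^2}\varepsilon_x\iota_x$ (note: no "$+\iota_x\varepsilon_x$" term; $\varepsilon_x\iota_x+\iota_x\varepsilon_x=|x|^2\operatorname{id}$, so your written formula collapses to $-\operatorname{id}$). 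Consequently every $(I,J)$-matrix entry of $|x|^2\sigma^{(i)}(\psi_n(x))$ is a \emph{quadratic} polynomial in $x$ regardless of $i$ (this is Lemma~\ref{lem:minordet}: the $(I,J)$-minor equals $-S_{IJ}(x)/|x|^2$ with $S_{IJ}$ explicit and quadratic). One should therefore write
$(\Ttcall{\lambda}{i})_{IJ}=\frac{1}{\frac n2-\lambda+1}\,\Ttcall{\lambda-1}{}(x)\,S_{IJ}(x)$
and evaluate the scalar Riesz kernel at the shifted index $\lambda-1=\frac n2-(l+1)$, so the residue is a multiple of $S_{IJ}(x)\,\Delta^{l+1}\delta$. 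The quadratic $S_{IJ}$ then drops $\Delta^{l+1}$ to $\Delta^{l-1}$ and $\Delta^{l}$ via Lemma~\ref{lem:152327}, which lines up precisely with the $\Delta^{l}$ and $(d^{\ast}d-dd^{\ast})\Delta^{l-1}$ terms in $\Dcall{2l}{(i)}$ once you compare with the explicit formulas $dd^{\ast}(f\,dx_I)$ and $d^{\ast}d(f\,dx_I)$ componentwise. Your proposed "match on a couple of representative forms by $O(n)$-equivariance" shortcut would also require justifying that the space of such equivariant operators is one-dimensional, which is an extra claim; the componentwise comparison avoids it.
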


Theorem \ref{thm:170183} will be proved
 in Section \ref{subsec:pfBran}
 after preparing some basic results.  

\subsection{Residue formula of the Riesz potential $|x|^{\mu}$}
We review a classical result on the Riesz potential 
 $|x|^{\mu}=(x_1^2+ \cdots +x_n^2)^{\frac \mu 2}$.  
This is a meromorphic family of distributions
 on ${\mathbb{R}}^n$, 
 and has simple poles 
 at $\mu=-n-2l$
 ($l \in {\mathbb{N}}$).  
Thus the normalized Riesz potential on ${\mathbb{R}}^n$
 defined by
\[
   \Ttcall \lambda{}(x)
   := \frac{1}{\Gamma(\lambda-\frac n 2)}|x|^{2(\lambda-n)}
\]
 depends holomorphically 
 on $\lambda$
 in the entire plane ${\mathbb{C}}$.  
The residue formula is classically known
(see \cite[Chap.\~(2.2)]{GelfandShilov}, {\cite{KS}} for example): 
\begin{fact}
[residue of $|x|^{\mu}$]
\label{fact:Rieszres}
Suppose $l \in {\mathbb{N}}$.  
Then we have:
\[
   \Ttcall{\frac n 2 - l}{}(x)=C(l,n) \Delta^l \delta(x),
\]
where we set 
\[
   C(l,n):=\frac{(-1)^l \pi^{\frac n 2}}{2^{2l} \Gamma(\frac n 2+l)}.  
\]
\end{fact}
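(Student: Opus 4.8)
The plan is to compute the (already known to be holomorphic) value of $\Ttcall{\lambda}{}$ at $\lambda=\tfrac n2-l$ by a downward recursion in $l$, reducing everything to the base case $l=0$. First I would record a functional equation: from the elementary identity $\Delta\,|x|^{\mu}=\mu(\mu+n-2)|x|^{\mu-2}$ — valid where $|x|^{\mu}$ is locally integrable, and hence for all $\mu\in{\mathbb{C}}$ by uniqueness of analytic continuation of distribution-valued holomorphic functions — together with $\Gamma(z+1)=z\Gamma(z)$, one gets
\[
   \Delta\,\Ttcall{\lambda}{}\;=\;4(\lambda-n)\,\Ttcall{\lambda-1}{}\qquad(\lambda\in{\mathbb{C}}).
\]
Specializing to $\lambda=\tfrac n2-(j-1)$, solving for $\Ttcall{\tfrac n2-j}{}$, and iterating from $j=1$ to $j=l$ then yields
\[
   \Ttcall{\tfrac n2-l}{}
   \;=\;
   \frac{(-1)^{l}}{2^{2l}\prod_{j=1}^{l}(\tfrac n2+j-1)}\,\Delta^{l}\,\Ttcall{\tfrac n2}{}
   \;=\;
   \frac{(-1)^{l}\,\Gamma(\tfrac n2)}{2^{2l}\,\Gamma(\tfrac n2+l)}\,\Delta^{l}\,\Ttcall{\tfrac n2}{}.
\]

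Next I would settle the base case $l=0$, i.e.\ prove $\Ttcall{\tfrac n2}{}=\frac{\pi^{n/2}}{\Gamma(n/2)}\,\delta$. Recall that the Riesz distribution $\mu\mapsto|x|^{\mu}$ has only simple poles, at $\mu=-n-2k$ ($k\in{\mathbb{N}}$). Since $\Gamma(\lambda-\tfrac n2)^{-1}$ has a simple zero at $\lambda=\tfrac n2$ while $|x|^{2(\lambda-n)}$ has at most a simple pole there, the product $\Ttcall{\lambda}{}$ is holomorphic at $\lambda=\tfrac n2$, and — writing $\mu=2(\lambda-n)$ so that $\mu+n=2(\lambda-\tfrac n2)$ — its value equals $\tfrac12\operatorname{Res}_{\mu=-n}|x|^{\mu}$. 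That residue is obtained by pairing $|x|^{\mu}$ against a test function and passing to polar coordinates: only the integration near the origin is singular, with residue $\operatorname{vol}(S^{n-1})$ times evaluation at $0$, so $\operatorname{Res}_{\mu=-n}|x|^{\mu}=\operatorname{vol}(S^{n-1})\,\delta=\frac{2\pi^{n/2}}{\Gamma(n/2)}\,\delta$, whence $\Ttcall{\tfrac n2}{}=\frac{\pi^{n/2}}{\Gamma(n/2)}\,\delta$. Substituting into the display above gives
\[
   \Ttcall{\tfrac n2-l}{}
   =\frac{(-1)^{l}\,\Gamma(\tfrac n2)}{2^{2l}\,\Gamma(\tfrac n2+l)}\cdot\frac{\pi^{n/2}}{\Gamma(n/2)}\,\Delta^{l}\delta
   =\frac{(-1)^{l}\,\pi^{n/2}}{2^{2l}\,\Gamma(\tfrac n2+l)}\,\Delta^{l}\delta
   =C(l,n)\,\Delta^{l}\delta,
\]
which is the assertion.

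The argument is essentially $\Gamma$-factor bookkeeping once the functional equation is in hand; the point that needs care is the exact matching of the simple pole of $|x|^{2(\lambda-n)}$ with the simple zero of $\Gamma(\lambda-\tfrac n2)^{-1}$, so that no spurious pole survives at $\lambda=\tfrac n2-l$, together with the legitimacy of extending $\Delta|x|^{\mu}=\mu(\mu+n-2)|x|^{\mu-2}$ off the region of convergence. An alternative that makes the cancellation transparent is to take Fourier transforms: the classical formula for $\widehat{|x|^{\mu}}$ produces exactly a factor $\Gamma(\lambda-\tfrac n2)$ which cancels the normalizing denominator, so $\widehat{\Ttcall{\lambda}{}}$ is manifestly entire and a constant multiple of $|\xi|^{n-2\lambda}$; at $\lambda=\tfrac n2-l$ this is a multiple of $|\xi|^{2l}$, and comparison with $\widehat{\Delta^{l}\delta}=(-|\xi|^{2})^{l}$ yields the constant $C(l,n)$. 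Since the statement is classical, one may also simply cite \cite{GelfandShilov}.
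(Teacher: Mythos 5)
Your proof is correct. The paper states this as a \emph{Fact} and simply cites the literature (Gel'fand--Shilov and Knapp--Stein), giving no proof of its own, so there is nothing to compare against; but your derivation is a valid and self-contained one. The two ingredients check out: the Euler identity $\Delta|x|^{\mu}=\mu(\mu+n-2)|x|^{\mu-2}$, combined with $\Gamma(\lambda-\tfrac n2)=(\lambda-\tfrac n2-1)\Gamma(\lambda-\tfrac n2-1)$, indeed gives the ladder relation $\Delta\,\Ttcall{\lambda}{}=4(\lambda-n)\,\Ttcall{\lambda-1}{}$ holding identically in $\lambda$ by analytic continuation (both sides are entire distribution-valued functions), and the base case
\[
\Ttcall{\tfrac n2}{}=\frac12\operatorname{Res}_{\mu=-n}|x|^{\mu}=\frac12\operatorname{vol}(S^{n-1})\,\delta=\frac{\pi^{n/2}}{\Gamma(n/2)}\,\delta
\]
is the standard polar-coordinates residue computation, with the factor $\tfrac12$ correctly accounting for the substitution $\mu=2(\lambda-n)$. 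Iterating the ladder $l$ times and telescoping the $\Gamma$-factors gives precisely $C(l,n)\,\Delta^{l}\delta$. Your side remark about the Fourier-transform alternative is also sound, since $\widehat{|x|^{\mu}}$ carries exactly the $\Gamma$-factor that makes $\widehat{\Ttcall{\lambda}{}}$ manifestly entire and proportional to $|\xi|^{n-2\lambda}$. One stylistic point: it is worth stating explicitly, as you implicitly use, that each side of the ladder relation is a holomorphic ${\mathcal{D}}'({\mathbb{R}}^n)$-valued function of $\lambda\in{\mathbb{C}}$, so that agreement on the half-plane ${\operatorname{Re}}\lambda\gg0$ propagates everywhere; this is the one place where the argument relies on the analytic framework rather than pure bookkeeping.
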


\subsection{Index set ${\mathfrak{I}}_{n,i}$}
In what follows, 
we use the convention 
 of index sets as below.  
For $0 \le i \le n$, 
 we define
\begin{equation*}
{\mathfrak{I}}_{n,i}
:=\{I \subset \{1,\cdots,n\}
  : \# I =i\}.   
\end{equation*}
For $I = \{k_1, \cdots, k_i\}\in {\mathfrak{I}}_{n,i}$
 with $k_1 < \cdots < k_i$,
 we set 
\begin{align*}
    e_I:=& e_{k_1} \wedge e_{k_2} \wedge \cdots \wedge e_{k_i}, 
\\
  d x_I:=& dx_{k_1} \wedge dx_{k_2} \wedge \cdots \wedge dx_{k_i}.  
\end{align*}
Then $\{e_I\}$ forms a basis
 of the vector space $\Exterior^i({\mathbb{C}}^n)$, 
 and $\{d x_I\}$ forms a basis of ${\mathcal{E}}^i({\mathbb{R}}^n)$
 as a $C^{\infty}({\mathbb{R}}^n)$-module.  
We then have a natural isomorphism
 as $C^{\infty}({\mathbb{R}}^n)$-modules:
\begin{equation}
\label{eqn:EiRn} 
 C^{\infty}({\mathbb{R}}^n) \otimes \Exterior^i({\mathbb{C}}^n)
  \overset \sim \to 
 {\mathcal{E}}^i({\mathbb{R}}^n), 
\quad
  \sum f_I \otimes e_I
  \mapsto \sum f_I d x_I.   
\end{equation}

We introduce a family of quadratic polynomials $S_{I J}(x)$
 indexed by $I$, $J \in {\mathfrak{I}}_{n,i}$
 of $x=(x_1, \cdots,x_n)$ as follows:   
\begin{equation}
\label{eqn:SIJ}
   S_{I J}(x)
   =
   \begin{cases}
   \sum_{k=1}^n \varepsilon_I(k) x_k^2
   \qquad
   &\text{if $I=J$}, 
\\
   {\operatorname{sgn}}(I,I') x_p x_q
   \qquad
   &\text{if $\#(I \setminus J)=1$}, 
\\
   0
   \qquad
   &\text{if $\#(I \setminus J)\ge 2$}.  
   \end{cases}
\end{equation}

Here, 
 we set $\varepsilon_I(k)=1$
 for $k \in I$;
$=-1$ for $k \not \in I$.  
For $K \subset \{1,\cdots,n\}$ and $1 \le p,q \le n$, 
 ${\operatorname{sgn}}(K;p,q) \in \{\pm 1\}$ is defined by 
\[
   {\operatorname{sgn}}(K;p,q)
  :=
  (-1)^{\# \{r \in K: {\operatorname{min}} (p,q) < r < {\operatorname{max}} (p,q)\}}.  
\]

For $I,I' \in {\mathfrak{I}}_{n,i}$ with $\#(I \setminus I')=1$, 
we set 
\begin{equation}
\label{eqn:sgnII}
{\operatorname{sgn}}(I,I')
:={\operatorname{sgn}}(I \cap I'; p,q), 
\end{equation}
where $p \in I \setminus I'$
 and $q \in I' \setminus I$.

We have the following.  
\begin{lemma}
\label{lem:minordet}
Suppose $0 \le i \le n$.  
Then 
 the minor determinant of $\psi_n(x) \in O(n)$ 
 for $I,J \in {\mathfrak{I}}_{n,i}$ is given by
\begin{equation}
\label{eqn:psidet}
  (\det \psi_n(x))_{I J}= -\frac {1}{|x|^2} S_{I J}(x).  
\end{equation}
\end{lemma}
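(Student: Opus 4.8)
The plan is to compute the $(I,J)$ entry of $\sigma^{(i)}(\psi_n(x))$ directly. Recall that for any $g \in O(n)$ acting on ${\mathbb{C}}^n$ with matrix entries $g_{kl}$, the exterior power $\sigma^{(i)}(g) = \Exterior^i(g)$ has matrix entries with respect to the basis $\{e_I\}_{I \in {\mathfrak{I}}_{n,i}}$ given precisely by the $i \times i$ minors of $g$: the $(I,J)$ entry is $\det(g_{I J})$, the determinant of the submatrix of $g$ with rows indexed by $I$ and columns indexed by $J$. So the content of the lemma is the explicit evaluation of these minors for the specific reflection matrix $g = \psi_n(x) = I_n - \frac{2 x\, {}^{t\!}x}{|x|^2}$, and the claim is that this minor equals $-\frac{1}{|x|^2} S_{IJ}(x)$, with $S_{IJ}$ as defined in \eqref{eqn:SIJ}.

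**Key algebraic step.**
First I would write $\psi_n(x) = I_n - \frac{2}{|x|^2} v\, {}^{t\!}v$ where $v = x = {}^{t\!}(x_1,\dots,x_n)$, so $\psi_n(x)_{kl} = \delta_{kl} - \frac{2 x_k x_l}{|x|^2}$. The submatrix $\psi_n(x)_{IJ}$ is then $D_{IJ} - \frac{2}{|x|^2} v_I\, {}^{t\!}v_J$ where $D_{IJ}$ is the $i \times i$ submatrix of the identity with rows $I$ and columns $J$ (so $D_{IJ}$ is $0$ unless $|I \cap J|$ is large), and $v_I, v_J$ are the appropriate coordinate subvectors. The determinant of a rank-one perturbation of a matrix is handled by the matrix determinant lemma / cofactor expansion. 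The cleanest route is to split into the three cases of \eqref{eqn:SIJ}:
\begin{enumerate}
\item[$\bullet$] If $\#(I \setminus J) \geq 2$ (equivalently $\#(J\setminus I)\geq 2$), then $D_{IJ}$ has at least two zero rows that cannot be compensated since the perturbation is rank one; expanding, every term in $\det(\psi_n(x)_{IJ})$ vanishes, giving $0$.
\item[$\bullet$] If $\#(I \setminus J) = 1$, say $I = (I\cap J) \cup \{p\}$ and $J = (I\cap J) \cup \{q\}$ with $p \neq q$, then $D_{IJ}$ is (up to sign) a permutation-type matrix missing one diagonal entry; cofactor expansion along the row/column indexed by $p$ and $q$ leaves only the rank-one contribution $-\frac{2 x_p x_q}{|x|^2}$ times a sign, and one checks the sign is exactly ${\operatorname{sgn}}(I,I') = {\operatorname{sgn}}(I\cap J; p,q)$ as in \eqref{eqn:sgnII}. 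To match the stated normalization $-\frac{1}{|x|^2}S_{IJ}(x) = -\frac{1}{|x|^2}{\operatorname{sgn}}(I,I') x_p x_q$, note the factor $2$ must cancel — this is where one uses $|x|^2 = \sum x_k^2$ combined with the relation below; more carefully, the off-diagonal minor comes out to $-\frac{2 x_p x_q}{|x|^2}\cdot(\text{sign})$ and one rescales.
\item[$\bullet$] If $I = J$, then $\psi_n(x)_{II} = I_i - \frac{2}{|x|^2} v_I\, {}^{t\!}v_I$, and by the matrix determinant lemma $\det(\psi_n(x)_{II}) = 1 - \frac{2}{|x|^2}\sum_{k \in I} x_k^2 = \frac{1}{|x|^2}\big(\sum_{k\notin I} x_k^2 - \sum_{k\in I} x_k^2\big) = -\frac{1}{|x|^2}\sum_{k=1}^n \varepsilon_I(k) x_k^2 = -\frac{1}{|x|^2}S_{II}(x)$.
\end{enumerate}
The diagonal case is immediate from the matrix determinant lemma; the off-diagonal case requires a careful sign bookkeeping.

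**Main obstacle and how to handle it.**
The genuinely fiddly part is the $\#(I\setminus J)=1$ case: reconciling the sign coming out of the cofactor/Laplace expansion of the minor $\psi_n(x)_{IJ}$ with the combinatorial sign ${\operatorname{sgn}}(I\cap I'; p,q) = (-1)^{\#\{r \in I\cap I' : \min(p,q) < r < \max(p,q)\}}$ defined in the paper, and simultaneously seeing why the factor of $2$ disappears. I expect the factor of $2$ is absorbed because $D_{IJ}$ itself is not quite a permutation matrix — it has a "hole" on one diagonal slot, so the full $i\times i$ determinant of $D_{IJ} - \frac{2}{|x|^2}v_I{}^{t\!}v_J$ picks up the perturbation entry only through the single column/row where $D$ is deficient, but then the complementary $(i-1)\times(i-1)$ block contributes $\det(I_{i-1} - \frac{2}{|x|^2}(\cdots))$ via the matrix determinant lemma on the remaining coordinates, and combining $1 - \frac{2}{|x|^2}\sum_{k\in I\cap J}x_k^2$ with the single entry $-\frac{2x_px_q}{|x|^2}$ and clearing $|x|^2$ produces exactly $-\frac{1}{|x|^2}x_p x_q$ after the algebra collapses. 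I would verify this by doing the $i=1$ and $i=2$ cases explicitly by hand to pin down the sign convention and the normalization, then promote to general $i$ by induction on $i$ using Laplace expansion along a row. The remaining verification that the sign is multiplicatively ${\operatorname{sgn}}(I\cap I'; p,q)$ is a standard exercise in tracking the number of transpositions needed to interleave $p$ among the elements of $I \cap I'$ versus interleaving $q$, which is exactly the count of elements strictly between $p$ and $q$.
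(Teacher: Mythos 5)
Your reduction to minors of exterior powers, the diagonal case (matrix determinant lemma), the case $\#(I\setminus J)\ge 2$, and the sign bookkeeping you defer are all fine. The genuine problem is the case $\#(I\setminus J)=1$. Your own cofactor expansion correctly yields $-\tfrac{2}{|x|^2}\operatorname{sgn}(I\cap J;p,q)\,x_px_q$, and this \emph{is} the true value of the minor: once the column indexed by $q$ contributes its rank-one part $-\tfrac{2x_q}{|x|^2}(x_k)_{k\in I}$, no other column may contribute its rank-one part (two columns proportional to the same vector kill the determinant), so every column indexed by $l\in I\cap J$ must contribute its Kronecker-delta part, and the complementary block contributes $\det I_{i-1}=1$, not $1-\tfrac{2}{|x|^2}\sum_{k\in I\cap J}x_k^2$. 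Hence the mechanism you propose for ``absorbing'' the factor $2$ does not exist, and even if the complementary block did contribute that quantity, the algebra would not collapse to $-\tfrac{1}{|x|^2}x_px_q$. The very check you promise to do ($i=1$, $I=\{1\}$, $J=\{2\}$) already refutes the claimed cancellation: the minor is the single entry $-2x_1x_2/|x|^2$, and nothing is available to cancel the $2$.

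What you have actually run into is a normalization slip in \eqref{eqn:SIJ}: in the case $\#(I\setminus J)=1$ the quantity used throughout the paper is $2\operatorname{sgn}(I,J)\,x_px_q$ rather than $\operatorname{sgn}(I,J)\,x_px_q$. Indeed, Lemma \ref{lem:170182}(2) asserts $S_{IJ}(x)\Delta^{l+1}\equiv 8l(l+1)\operatorname{sgn}(I,J)\frac{\partial^2}{\partial x_p\partial x_q}\Delta^{l-1}$, which by Lemma \ref{lem:152327}(2) forces the factor $2$; likewise, in Case 2 of the proof of Proposition \ref{prop:153294} one has $g_{IJ}(x)=-2\operatorname{sgn}(K;p,q)x_px_q$, which via Lemma \ref{lem:171274} equals $-S_{JI}$. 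So the correct course is to keep your determinant computation exactly as it stands (including the sign count $(-1)^{\#\{r\in I\cap J:\ \min(p,q)<r<\max(p,q)\}}$, which is right) and to read the off-diagonal entry of $S_{IJ}$ with the factor $2$; do not attempt to make the $2$ disappear inside the determinant, since any argument doing so is necessarily wrong.
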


\subsection{Proof of Theorem \ref{thm:170183}}
\label{subsec:pfBran}
We complete the proof of Theorem \ref{thm:170183}
 by comparing the matrix components
 of the both sides of the equation \eqref{eqn:KSres}.  

For $I, J \in {\mathfrak {I}}_{n,i}$, 
 we define the $(I,J)$-component 
 of the matrix-valued distribution $\Ttcall \lambda i$ by
\[
   (\Ttcall \lambda i)_{I J} 
   :=
   \langle 
   \Ttcall \lambda i(e_I), e_J^{\vee}
   \rangle, 
\]
where $\{e_J^{\vee}\}$ is the dual basis.  
By \eqref{eqn:exrep} and \eqref{eqn:psidet}, 
 we have
\begin{align*}
   (\Ttcall \lambda i)_{I J}
   =&
   \frac{1}{\Gamma(\lambda-\frac n 2)}|x|^{2(\lambda-n)} (\det \psi_n (x))_{J I}
\\
=&
   \frac{-1}{\Gamma(\lambda-\frac n 2)}|x|^{2(\lambda-n-1)} S_{J I}(x)
\\
=&
   \frac{1}{\frac n 2-\lambda+1} \Ttcall {\lambda-1}{}(x) S_{I J}(x).  
\end{align*}
Applying Fact \ref{fact:Rieszres}, 
 we get the first statement
 of Theorem \ref{thm:170183}, 
 and the following equality
 in ${\mathcal{D}}_{\{0\}}'({\mathbb{R}}^n)$:
\begin{equation}
\label{eqn:Tres}
   (\Ttcall \lambda i)_{I J}|_{\lambda=\frac n 2-l}
   =
   \frac {1}{l+1}
   C(l+1,n)S_{I J}(x)\Delta^{l+1}\delta(x).  
\end{equation}
In order to compute the right-hand side
 of \eqref{eqn:Tres}, 
 we  work with the Weyl algebra by using Lemma \ref{lem:152327}.  
\begin{lemma}
\label{lem:170182}
Suppose $l \in {\mathbb{N}}_+$
 and $I,J \in {\mathfrak{I}}_{n,i}$.  
Then the following equalities hold
 in ${\mathbb{C}}[x, \frac{\partial}{\partial x}]/{\mathcal{J}}$.  
\begin{enumerate}
\item[{\rm{(1)}}]
Suppose $I=J$.  
\[
  S_{I J}(x)\Delta^{l+1}
  \equiv
  4(l+1)l(\sum_{p \in I} \varepsilon_I(p)\frac{\partial^2}{\partial x_p^2})
  \Delta^{l-1}
  + 2 (l+1)(2i-n)\Delta^{l}.  
\]
\item[{\rm{(2)}}]
Suppose $\#(I-J)=1$.  
Let $\{p,q\}:=(I \cup J) \setminus (I \cap J)$.  
\[
  S_{I J}(x)\Delta^{l+1}
  \equiv
  8 l (l+1) \operatorname{sgn}(I,J) \frac{\partial^2}{\partial x_p \partial x_q}  \Delta^{l-1}.  
\]
\end{enumerate}
\end{lemma}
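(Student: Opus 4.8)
The strategy is a direct computation in the quotient $\mathbb{C}[x,\frac{\partial}{\partial x}]/\mathcal{J}$ using Lemma \ref{lem:152327}. Recall from \eqref{eqn:SIJ} that when $I=J$ we have $S_{IJ}(x)=\sum_{k=1}^n \varepsilon_I(k)x_k^2$, and when $\#(I-J)=1$ with $\{p,q\}=(I\cup J)\setminus(I\cap J)$ we have $S_{IJ}(x)=\operatorname{sgn}(I,I')\,x_px_q$. So in both cases $S_{IJ}(x)\Delta^{l+1}$ is a $\mathbb{C}$-linear combination of expressions of the form $x_k^2\Delta^{l+1}$ or $x_px_q\Delta^{l+1}$ with $p\ne q$, to which parts (3) and (2) of Lemma \ref{lem:152327} apply verbatim (with $k$ there equal to $l+1$ here).

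For part (2) this is immediate: $S_{IJ}(x)\Delta^{l+1}=\operatorname{sgn}(I,J)\,x_px_q\Delta^{l+1}\equiv \operatorname{sgn}(I,J)\cdot 4(l+1)l\,\frac{\partial^2}{\partial x_p\partial x_q}\Delta^{l-1}\bmod\mathcal{J}$ by Lemma \ref{lem:152327}(2), and $4(l+1)l=\frac{1}{2}\cdot 8l(l+1)$ — wait, I should double-check the constant: Lemma \ref{lem:152327}(2) gives coefficient $4k(k-1)$ with $k=l+1$, i.e.\ $4(l+1)l$, whereas the claimed coefficient is $8l(l+1)$. This factor-of-two discrepancy means I must be careful about the precise definition of $\operatorname{sgn}(I,I')$ versus $\operatorname{sgn}(I,J)$ and whether there is an implicit symmetrization; most likely the quadratic polynomial $S_{IJ}$ for the off-diagonal case, once the minor determinant of $\psi_n(x)$ is written symmetrically in $I$ and $J$, actually contributes $x_px_q+x_qx_p$ or the two ordered pairs $(I,J)$ and $(J,I)$ both feed in — I would track this bookkeeping carefully rather than guess. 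For part (1), I would split $S_{II}(x)\Delta^{l+1}=\sum_{k=1}^n\varepsilon_I(k)\,x_k^2\Delta^{l+1}$ and apply Lemma \ref{lem:152327}(3) to each term, getting $\sum_k\varepsilon_I(k)\big(4(l+1)l\,\frac{\partial^2}{\partial x_k^2}\Delta^{l-1}+2(l+1)\Delta^l\big)$. The first piece is $4(l+1)l\big(\sum_{k=1}^n\varepsilon_I(k)\frac{\partial^2}{\partial x_k^2}\big)\Delta^{l-1}$; since $\frac{\partial^2}{\partial x_k^2}$ summed over all $k$ with $\varepsilon_I(k)=+1$ for $k\in I$ and $-1$ otherwise gives $\sum_{p\in I}\frac{\partial^2}{\partial x_p^2}-\sum_{p\notin I}\frac{\partial^2}{\partial x_p^2}=2\sum_{p\in I}\frac{\partial^2}{\partial x_p^2}-\Delta$, and I can absorb the $-\Delta$ term and the second piece $2(l+1)\big(\sum_k\varepsilon_I(k)\big)\Delta^l=2(l+1)(i-(n-i))\Delta^l=2(l+1)(2i-n)\Delta^l$ into the stated form.

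The main obstacle is purely the constant-chasing: reconciling the coefficients $4k(k-1)$, $2k$ from Lemma \ref{lem:152327} with the target coefficients $8l(l+1)$, $2(l+1)(2i-n)$, $4(l+1)l$ in the statement, and in particular pinning down the exact normalization of $S_{IJ}$ in the off-diagonal case (whether it is $x_px_q$ or $2x_px_q$, and the precise sign $\operatorname{sgn}(I,J)$ relative to $\operatorname{sgn}(I,I')$ in \eqref{eqn:sgnII}). Once the definition of $S_{IJ}$ via the minor determinant identity \eqref{eqn:psidet} is unwound — noting that $\psi_n(x)$ is symmetric, so its minors for $(I,J)$ and $(J,I)$ agree, and the symmetric bilinear structure naturally produces the factor $2$ — the computation is routine. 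I expect the diagonal case to match cleanly after the $2\sum_{p\in I}-\Delta$ rewriting, and the off-diagonal constant to come out right once the symmetrization is accounted for; there are no analytic subtleties here since everything lives in the purely algebraic quotient $\mathbb{C}[x,\frac{\partial}{\partial x}]/\mathcal{J}$, and Lemma \ref{lem:152327} does all the heavy lifting.
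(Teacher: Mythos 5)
Your overall plan coincides with the paper's proof, which is exactly the one-line computation you describe: expand $S_{IJ}$ via \eqref{eqn:SIJ} and apply Lemma \ref{lem:152327} with $k=l+1$. But the two points you leave as ``expectations'' should be settled, and one of them is not resolved the way you sketch. The factor of two in part (2) is real and lives in $S_{IJ}$, not in Lemma \ref{lem:152327}: for the minor-determinant identity \eqref{eqn:psidet} to hold, the off-diagonal value must be $S_{IJ}(x)=2\operatorname{sgn}(I,J)x_px_q$ (the printed \eqref{eqn:SIJ} drops the $2$). Indeed $\psi_n(x)=I_n-2|x|^{-2}x\,{}^t\!x$ is a rank-one perturbation of the identity, so for $I=K\cup\{p\}$, $J=K\cup\{q\}$ the relevant minor collapses to the single surviving term $-2|x|^{-2}\operatorname{sgn}(K;p,q)\,x_px_q$; for $n=2$, $i=1$ the off-diagonal entry of $\psi_2$ is $-2x_1x_2/|x|^2$, already showing the $2$. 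Thus the $2$ comes from the reflection term $-2x\,{}^t\!x/|x|^2$, not from any double counting of the ordered pairs $(I,J)$ and $(J,I)$ as you speculate. The paper itself uses this normalization later: in Section \ref{subsec:resAC}, Case 2, $g_{IJ}(x)=-2\operatorname{sgn}(K;p,q)x_px_q$, while $g_{IJ}=-S_{JI}$ by Lemma \ref{lem:171274}. With $S_{IJ}=2\operatorname{sgn}(I,J)x_px_q$, Lemma \ref{lem:152327}(2) at $k=l+1$ gives $2\cdot 4(l+1)l=8l(l+1)$, as claimed.

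In part (1), your direct computation already \emph{is} the assertion: $S_{II}\,\Delta^{l+1}\equiv 4(l+1)l\bigl(\sum_{k=1}^n\varepsilon_I(k)\frac{\partial^2}{\partial x_k^2}\bigr)\Delta^{l-1}+2(l+1)(2i-n)\Delta^{l}$, and the sum displayed in the lemma (and likewise in Lemmas \ref{lem:170113} and \ref{lem:170181}) has to be read as this full signed sum over all indices --- the presence of $\varepsilon_I(p)$ signals this, and the computation of $d^{\ast}d-dd^{\ast}$ in the proof of Lemma \ref{lem:170181} produces exactly $\sum_{k=1}^n\varepsilon_I(k)\frac{\partial^2}{\partial x_k^2}$, which is what makes the final comparison in Theorem \ref{thm:170183} work. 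The ``absorption'' you propose does not go through: rewriting $\sum_k\varepsilon_I(k)\frac{\partial^2}{\partial x_k^2}=2\sum_{p\in I}\frac{\partial^2}{\partial x_p^2}-\Delta$ and insisting on a sum over $p\in I$ only would force the coefficient $8(l+1)l$ on the second-order term and shift the $\Delta^{l}$ coefficient by $-4(l+1)l$, contradicting the stated constants. So keep your computed expression as the final answer and read the lemma's notation accordingly; with these two clarifications your argument is complete and is the same as the paper's.
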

\begin{proof}
Direct from the definition \eqref{eqn:SIJ}
 of $S_{I J}(x)$ and from Lemma \ref{lem:152327}.  
\end{proof}

By Lemma \ref{lem:170182}, 
 the equation \eqref{eqn:Tres} gives the following.  
\begin{lemma}
\label{lem:170113}
Suppose $l \in {\mathbb{N}}_+$.  
\begin{align*}
&(\Ttcall{\lambda}{(i)})_{I J}|_{\lambda=\frac n 2-l}
\\
   =&
   4 C(l+1,n) \times
   \begin{cases}
   l (\sum_{p \in I} \varepsilon_I(p) \frac{\partial^2}{\partial x_p^2})
   \Delta^{l-1}
   +
   (i-\frac n 2)\Delta^{l}
   \quad
   &\text{if $I=J$}, 
\\
   2 l {\operatorname{sgn}}(I,J) 
   \quad
   &\text{if $\#(I \setminus J)=1$}, 
\\
   0
   \quad
   &\text{otherwise}.  
   \end{cases}
\end{align*}
\end{lemma}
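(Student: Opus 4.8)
The plan is to deduce the formula directly from the distributional identity \eqref{eqn:Tres} by transporting it through the algebraic dictionary of Section~\ref{subsec:Walg}. By \eqref{eqn:Tres}, the matrix coefficient $(\Ttcall \lambda i)_{IJ}|_{\lambda=\frac n2-l}$ equals $\frac{1}{l+1}C(l+1,n)\,S_{IJ}(x)\Delta^{l+1}\delta(x)$, which is a distribution supported at the origin, hence an element of ${\mathcal{D}}_{\{0\}}'({\mathbb{R}}^n)$. Under the isomorphism $\overline{\Psi}$ of \eqref{eqn:WalgD}, this distribution is the image of the class of the Weyl-algebra element $S_{IJ}(x)\Delta^{l+1}$ modulo ${\mathcal{J}}$.

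Next I would invoke Lemma~\ref{lem:170182} to rewrite that class. When $I=J$, it is congruent mod ${\mathcal{J}}$ to $4(l+1)l\bigl(\sum_{p\in I}\varepsilon_I(p)\frac{\partial^2}{\partial x_p^2}\bigr)\Delta^{l-1}+2(l+1)(2i-n)\Delta^{l}$; when $\#(I\setminus J)=1$, with $\{p,q\}=(I\cup J)\setminus(I\cap J)$, it is congruent to $8l(l+1)\,{\operatorname{sgn}}(I,J)\frac{\partial^2}{\partial x_p\partial x_q}\Delta^{l-1}$; and when $\#(I\setminus J)\ge2$ it is zero, since $S_{IJ}\equiv0$ by \eqref{eqn:SIJ}. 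The crucial feature of each representative is that it is a differential operator with \emph{constant} coefficients, so Lemma~\ref{lem:160205} applies and shows that the corresponding distribution, acting by convolution, coincides with that constant-coefficient differential operator on $C^{\infty}({\mathbb{R}}^n)$.

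It then remains only to collect the scalar $\frac{1}{l+1}C(l+1,n)$. Using $\frac{1}{l+1}\cdot4(l+1)l=4l$, $\frac{1}{l+1}\cdot2(l+1)(2i-n)=4(i-\frac n2)$, and $\frac{1}{l+1}\cdot8l(l+1)=4\cdot2l$, one reads off exactly the three cases of the lemma.

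I do not expect a genuine obstacle here: the substantive work is already carried out in Lemma~\ref{lem:152327} (hence Lemma~\ref{lem:170182}) and in \eqref{eqn:Tres}. The only points demanding care are the conceptual step of passing from ``distribution supported at the origin,'' equivalently ``Weyl-algebra element modulo ${\mathcal{J}}$,'' to ``constant-coefficient differential operator on smooth functions'' --- which is precisely what Lemma~\ref{lem:160205} licenses once Lemma~\ref{lem:170182} has eliminated all the multiplication operators $x_k$ --- and the elementary bookkeeping of the constants $C(l+1,n)$.
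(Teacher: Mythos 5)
Your argument is correct and is essentially the paper's own proof: the paper deduces Lemma \ref{lem:170113} by combining \eqref{eqn:Tres} with Lemma \ref{lem:170182}, exactly as you do, and your extra steps (the isomorphism \eqref{eqn:WalgD}, Lemma \ref{lem:160205}, and the constant bookkeeping $\frac{1}{l+1}C(l+1,n)$) only make explicit what the paper leaves implicit. Note also that your version correctly retains the factor $\frac{\partial^2}{\partial x_p\partial x_q}\Delta^{l-1}$ in the case $\#(I\setminus J)=1$, which is evidently intended in the statement.
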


On the other hand, 
 Branson's conformally covariant differential operators
 $\Dcall {2l}{(i)}$ take the following form:
\begin{lemma}
\label{lem:170181}
Suppose $0 \le i \le n$ and $l \in {\mathbb{N}}_+$. 
For any $\alpha= f d x_I \in {\mathcal{E}}^i({\mathbb{R}}^n)$
 with $f \in C^{\infty}({\mathbb{R}}^n)$
 and $I \in {\mathfrak{I}}_{n,i}$, 
 we have  
\begin{align*}
  \Dcall {2l}{(i)} \alpha
=&
  (  (i-\frac n 2) \Delta^l f 
   + l(\sum_{p \in I} \varepsilon_I(p)\frac{\partial^2}{\partial x_p^2})
     \Delta^{l-1} f) d x_I
\\
  &+ 2 l \sum_{\substack{p \in I \\ q \not \in I}} \operatorname{sgn}(I;p,q)
  \frac{\partial^2}{\partial x_p \partial x_q}
  (\Delta^{l-1}f)
   d x_{I \setminus \{p\} \cup \{q\}}.  
\end{align*}
\end{lemma}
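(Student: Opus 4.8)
The plan is to compute $\Dcall {2l}{(i)}\alpha$ directly from the Hodge-theoretic definition \eqref{eqn:Branson}, exploiting that on flat ${\mathbb{R}}^{n}$ all the operators involved have constant coefficients in the frame $\{dx_{J}\}$. The first observation is that the form Laplacian $\Delta=-(dd^{\ast}+d^{\ast}d)$ of \eqref{eqn:Branson} acts on ${\mathcal{E}}^{i}({\mathbb{R}}^{n})$ componentwise as the scalar operator $\Delta_{{\mathbb{R}}^{n}}=\sum_{k=1}^{n}\frac{\partial^{2}}{\partial x_{k}^{2}}$, since the Christoffel symbols vanish in Cartesian coordinates and hence there is no curvature term; in particular $\Delta^{l-1}(f\,dx_{I})=(\Delta_{{\mathbb{R}}^{n}}^{\,l-1}f)\,dx_{I}$ and $\Delta^{l}(f\,dx_{I})=(\Delta_{{\mathbb{R}}^{n}}^{\,l}f)\,dx_{I}$. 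Since $d$, $d^{\ast}$ and $\Delta$ mutually commute, it then suffices to evaluate $(d^{\ast}d-dd^{\ast})(g\,dx_{I})$ for an arbitrary $g\in C^{\infty}({\mathbb{R}}^{n})$ and afterwards set $g=\Delta_{{\mathbb{R}}^{n}}^{\,l-1}f$; the first summand $-(\tfrac n2-i)\Delta^{l}(f\,dx_{I})$ of $\Dcall {2l}{(i)}(f\,dx_{I})$ is simply $(i-\tfrac n2)(\Delta_{{\mathbb{R}}^{n}}^{\,l}f)\,dx_{I}$.

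To evaluate $d^{\ast}d-dd^{\ast}$ I would use the constant-coefficient expressions $d=\sum_{q=1}^{n}(dx_{q}\wedge\,\cdot\,)\circ\frac{\partial}{\partial x_{q}}$ and $d^{\ast}=-\sum_{q=1}^{n}\iota_{\frac{\partial}{\partial x_{q}}}\circ\frac{\partial}{\partial x_{q}}$, together with the anticommutation relation $\iota_{\frac{\partial}{\partial x_{p}}}\circ(dx_{q}\wedge\,\cdot\,)+(dx_{q}\wedge\,\cdot\,)\circ\iota_{\frac{\partial}{\partial x_{p}}}=\delta_{pq}\,{\operatorname{id}}$. A short manipulation, using that $\frac{\partial^{2}}{\partial x_{p}\partial x_{q}}$ is symmetric in $p$ and $q$, yields the operator identity
\begin{equation*}
   d^{\ast}d-dd^{\ast}=-\Delta_{{\mathbb{R}}^{n}}
   +2\sum_{p,q=1}^{n}(dx_{p}\wedge\,\cdot\,)\,\iota_{\frac{\partial}{\partial x_{q}}}\,\frac{\partial^{2}}{\partial x_{p}\partial x_{q}}
\end{equation*}
on ${\mathcal{E}}^{i}({\mathbb{R}}^{n})$. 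Applying this to $g\,dx_{I}$: the double sum contributes nothing unless $q\in I$; the terms with $p=q\in I$ give $2\sum_{q\in I}\frac{\partial^{2}g}{\partial x_{q}^{2}}\,dx_{I}$, which together with $-\Delta_{{\mathbb{R}}^{n}}g\,dx_{I}$ equals $\big(\sum_{k=1}^{n}\varepsilon_{I}(k)\frac{\partial^{2}g}{\partial x_{k}^{2}}\big)dx_{I}$ (the sum running over all indices $k$, consistently with the definition of $S_{II}$); and the terms with $q\in I$, $p\notin I$ give the off-diagonal part $2\sum_{q\in I}\sum_{p\notin I}\eta_{q}\zeta\,\frac{\partial^{2}g}{\partial x_{p}\partial x_{q}}\,dx_{(I\setminus\{q\})\cup\{p\}}$, where $\eta_{q}=(-1)^{\#\{r\in I:\,r<q\}}$ and $\zeta=(-1)^{\#\{r\in I\setminus\{q\}:\,r<p\}}$ are the reordering signs coming respectively from $\iota_{\frac{\partial}{\partial x_{q}}}(dx_{I})$ and from $dx_{p}\wedge dx_{I\setminus\{q\}}$.

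The one genuinely delicate point is to identify the sign $\eta_{q}\zeta$ with ${\operatorname{sgn}}(I;p,q)=(-1)^{\#\{r\in I:\,\min(p,q)<r<\max(p,q)\}}$. I would verify this by a case split according to whether $p<q$ or $q<p$, using $q\in I$ and $p\notin I$ to move freely between counts over $I$ and over $I\setminus\{q\}$; in both cases the exponent of $\eta_{q}\zeta$ reduces modulo $2$ to $\#\{r\in I:\,\min(p,q)<r<\max(p,q)\}$. Granting this,
\begin{equation*}
   (d^{\ast}d-dd^{\ast})(g\,dx_{I})
   =\Big(\sum_{k=1}^{n}\varepsilon_{I}(k)\frac{\partial^{2}g}{\partial x_{k}^{2}}\Big)dx_{I}
   +2\sum_{\substack{q\in I\\ p\notin I}}{\operatorname{sgn}}(I;p,q)\,\frac{\partial^{2}g}{\partial x_{p}\partial x_{q}}\,dx_{(I\setminus\{q\})\cup\{p\}},
\end{equation*}
and substituting $g=\Delta_{{\mathbb{R}}^{n}}^{\,l-1}f$, multiplying by $l$, adding $(i-\tfrac n2)(\Delta_{{\mathbb{R}}^{n}}^{\,l}f)\,dx_{I}$, and relabeling $p\leftrightarrow q$ in the off-diagonal sum (harmless, as $\frac{\partial^{2}}{\partial x_{p}\partial x_{q}}$ and ${\operatorname{sgn}}(I;p,q)$ are symmetric in $p,q$) gives exactly the claimed expression for $\Dcall {2l}{(i)}\alpha$. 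Hence the main obstacle is purely the combinatorial sign-bookkeeping — matching the wedge and contraction reordering signs to the convention for ${\operatorname{sgn}}(I;p,q)$ — and there is no conceptual difficulty beyond the flatness input used at the start.
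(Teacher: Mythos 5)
Your proposal is correct and takes essentially the same approach as the paper: the paper's proof simply records the flat-space formulas for $dd^{\ast}(f\,dx_I)$ and $d^{\ast}d(f\,dx_I)$ and subtracts them inside the definition \eqref{eqn:Branson}, which is exactly your computation, packaged slightly differently through the anticommutation identity and the same $\operatorname{sgn}(I;p,q)$ bookkeeping. Your diagonal term $\sum_{k=1}^{n}\varepsilon_I(k)\frac{\partial^2}{\partial x_k^2}$ (summed over all indices) is indeed what both computations yield, and the sum written over $p\in I$ in the lemma's displayed statement is to be read in that sense.
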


\begin{proof}
The formula is direct from the definition \eqref{eqn:Branson}
 of $\Dcall {2l}{(i)}$ 
 and from the following elementary computations:
\begin{align*}
dd^{\ast}(f d x_I)
=& - \sum_{p \in I} \frac{\partial^2 f}{\partial x_p^2} d x_I
 - \sum_{\substack {p \in I \\ q \not \in I}} 
   \operatorname{sgn}(I;p,q)
   \frac{\partial^2 f}{\partial x_p \partial x_q}
   d x_{I \setminus \{p\} \cup \{q\}}, 
\\
d^{\ast} d(f d x_I)
=& - \sum_{q \not \in I} \frac{\partial^2 f}{\partial x_q^2} d x_I
 + \sum_{\substack {p \in I \\ q \not \in I}} \operatorname{sgn}(I;p,q)
   \frac{\partial^2 f}{\partial x_p \partial x_q}
   d x_{I \setminus \{p\} \cup \{q\}}.  
\end{align*}
\end{proof}

\begin{proof}
[Proof of Theorem \ref{thm:170183}]
We identify ${\mathcal{E}}^i({\mathbb{R}}^n)$
 with $C^{\infty}({\mathbb{R}}^n) \otimes \Exterior^i({\mathbb{C}}^n)$
 via \eqref{eqn:EiRn}.  
Comparing the $(I, J)$-component
 of Branson's conformally covariant operators
 with that of the matrix-valued Knapp--Stein intertwining operator
 $\Ttbb \lambda {n-\lambda}i$
 in Lemmas \ref{lem:170113} and \ref{lem:170181}, 
 we get Theorem \ref{thm:170183}.  
\end{proof}

\subsection{Vanishing condition of $\Ttbb \lambda {n-\lambda} i$}
As a biproduct of the residue formula \eqref{eqn:KSres}, 
 we obtain the vanishing condition
 of the matrix-valued Knapp--Stein intertwining operator
 $\Ttbb \lambda{n-\lambda}i$ as below.  
\begin{corollary}
\label{cor:Tvanish}
$\Ttbb \lambda{n-\lambda}i=0$
 if and only if $n$ is even
 and $i=\lambda=\frac n 2$.  
\end{corollary}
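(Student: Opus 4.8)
The plan is to reduce the vanishing of $\Ttbb{\lambda}{n-\lambda}{i}$ to that of its distribution kernel and then split according to whether $n-2\lambda\in 2{\mathbb N}$. Since $\Ttbb{\lambda}{n-\lambda}{i}$ acts on ${\mathcal E}_c^i({\mathbb R}^n)$ by convolution with $\Ttcall{\lambda}{i}$, it is the zero operator if and only if $\Ttcall{\lambda}{i}=0$ in ${\mathcal D}'({\mathbb R}^n)$. By \eqref{eqn:exrep} and Lemma \ref{lem:minordet}, the diagonal component attached to any $I\in{\mathfrak I}_{n,i}$ is
\[
   (\Ttcall{\lambda}{i})_{I I}(x)
   =\frac{-1}{\Gamma(\lambda-\frac n2)}\,|x|^{2(\lambda-n-1)}\,S_{I I}(x),
   \qquad
   S_{I I}(x)=\sum_{k=1}^{n}\varepsilon_I(k)x_k^2 ,
\]
and $S_{I I}$ is a nonzero quadratic polynomial for every $I$. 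I would then argue as follows. If $n-2\lambda\notin 2{\mathbb N}$, i.e.\ $\lambda\notin\{\tfrac n2-l:l\in{\mathbb N}\}$, then the meromorphically continued Riesz distributions $|x|^{2(\lambda-n-1)}x_k^2$ have poles only at $\lambda\in\{\tfrac n2-l:l\in{\mathbb N}\}$ -- in particular no pole at $\lambda=\tfrac n2+1$, since the residue of $|x|^{2(\lambda-n-1)}$ there is a multiple of $\delta$, which is killed by the factor $x_k^2$ vanishing to order two at the origin. Hence $|x|^{2(\lambda-n-1)}S_{I I}(x)$ is holomorphic at such $\lambda$, and its restriction to ${\mathbb R}^n\setminus\{0\}$ is the nonzero smooth function $|x|^{2(\lambda-n-1)}S_{I I}(x)$, so it is a nonzero distribution; since $\Gamma(\lambda-\tfrac n2)$ is then finite and nonzero, $(\Ttcall{\lambda}{i})_{I I}\neq 0$, and therefore $\Ttbb{\lambda}{n-\lambda}{i}\neq 0$.

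It remains to treat $\lambda=\tfrac n2-l$ with $l\in{\mathbb N}$, where $\Ttbb{\lambda}{n-\lambda}{i}$ is a differential operator by Theorem \ref{thm:170183}(1). For $l\in{\mathbb N}_+$ the residue formula \eqref{eqn:KSres} gives $\Ttbb{\lambda}{n-\lambda}{i}=c_l\,\Dcall{2l}{(i)}$ with $c_l\neq 0$. For $l=0$ I would prove the same shape by a short direct computation: by the formula above and Fact \ref{fact:Rieszres}, $(\Ttcall{\frac n2}{i})_{I J}=\Ttcall{\frac n2-1}{}(x)\,S_{I J}(x)=C(1,n)\,S_{I J}(x)\,\Delta\delta(x)$, and reducing $S_{I J}(x)\,\Delta$ modulo ${\mathcal J}$ by Lemma \ref{lem:152327}(2)--(3) with $k=1$ and invoking Lemma \ref{lem:160205} yields $\Ttbb{\frac n2}{\frac n2}{i}=c_0\,\Dcall{0}{(i)}$ with $c_0=-\frac{\pi^{\frac n2}}{\Gamma(\frac n2+1)}\neq 0$. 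Thus in this case $\Ttbb{\lambda}{n-\lambda}{i}=0$ if and only if $\Dcall{2l}{(i)}=0$.

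Finally, from \eqref{eqn:Branson} and Lemma \ref{lem:170181}, I would check that $\Dcall{2l}{(i)}$ vanishes only for $l=0$ and $i=\tfrac n2$: for $l=0$ it equals $-(\tfrac n2-i)\,{\operatorname{id}}$; for $l\in{\mathbb N}_+$ and $0<i<n$ the off-diagonal term $2l\,{\operatorname{sgn}}(I;p,q)\frac{\partial^2}{\partial x_p\partial x_q}(\Delta^{l-1}f)\,dx_{I\setminus\{p\}\cup\{q\}}$ (with $p\in I$, $q\notin I$) is a nonzero differential operator, while for $i=0$, resp.\ $i=n$, the coefficient of $dx_I$ is $-\tfrac n2\Delta^l$, resp.\ $(\tfrac n2+l)\Delta^l$, again nonzero. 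Combining the two cases, $\Ttbb{\lambda}{n-\lambda}{i}=0$ exactly when $l=0$ and $i=\tfrac n2$, that is, $n$ is even and $i=\lambda=\tfrac n2$, as asserted in Corollary \ref{cor:Tvanish}. The step I expect to be the main obstacle is the non-differential-operator case, and within it the borderline value $\lambda=\tfrac n2+1$: there the Riesz distribution $|x|^{2(\lambda-n)}$ is itself singular, and one has to see that the cofactor $S_{I I}(x)$, vanishing to order two at the origin, removes the pole and leaves a genuine nonzero locally integrable function; everything else reduces either to the residue formula \eqref{eqn:KSres} already proven or to the explicit form of Branson's operators in Lemma \ref{lem:170181}.
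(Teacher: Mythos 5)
Your argument is correct and follows essentially the paper's own route: reduce to the distribution kernel, observe that it can vanish only when it is supported at the origin (equivalently $n-2\lambda\in 2\mathbb{N}$), then use the residue formula to reduce the question to the vanishing of Branson's operator $\Dcall{2l}{(i)}$, which occurs exactly for $l=0$ and $i=\frac n2$. The only differences are elaborations rather than a new idea: the paper disposes of the non-differential case in one line, since off the origin the kernel is the smooth function $\frac{1}{\Gamma(\lambda-\frac n2)}|x|^{2(\lambda-n)}\sigma^{(i)}(\psi_n(x))$ with $\sigma^{(i)}(\psi_n(x))$ invertible, so your componentwise detour through $|x|^{2(\lambda-n-1)}S_{II}(x)$ and the borderline value $\lambda=\frac n2+1$ is avoidable, while your explicit $l=0$ computation (giving $c_0=-\pi^{\frac n2}/\Gamma(\frac n2+1)$) spells out a case the paper treats only implicitly.
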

\begin{proof}
Since both $\Ttcall \lambda{}(x)$ and 
$\sigma^{(i)} (\psi_n(x))$
 are smooth in ${\mathbb{R}}^n\setminus \{0\}$, 
 the distribution kernel $\Ttcall \lambda i$ vanishes
 only when ${\operatorname{Supp}} \Ttcall \lambda{} \subset \{0\}$, 
 or equivalently,
 $n-2\lambda \in 2 {\mathbb{N}}$.  
Suppose now $n-2\lambda=2l$
 for some $l \in {\mathbb{N}}$.  
Then it follows from Theorem \ref{thm:170183}
 that $\Ttcal \lambda{n-\lambda}{(i)}$ vanishes
 if and only if ${\mathcal{D}}_{2l}^{(i)}=0$.  
In turn, 
 this happens exactly 
when $i=\frac n 2$
 and $l=0$
 by \eqref{eqn:Branson}.  
Thus the corollary is proved.  
\end{proof}

\section{Juhl's operator in the Weyl algebra}
\label{subsec:Weylalg}
In order to prove our main results
 (Theorem \ref{thm:153316a}), 
 we need some further identities in ${\mathbb{C}}[x, \frac{\partial}{\partial x}]/{\mathcal{J}}$
 where ${\mathcal{J}}$ is the left ideal 
generated by $x_1$, $\cdots$, $x_n$
 as in Section \ref{subsec:Walg}.

For $l \in {\mathbb{N}}$, 
 we define a finite-dimensional vector space
 of polynomials of one variable
 by 
\[
  {\operatorname{Pol}}_l[z]_{\operatorname{even}}
   :=
  {\mathbb{C}}\text{-span}
  \langle z^{l-2j}: 0 \le 2j \le l \rangle.  
\]
We inflate a polynomial $g(z) \in {\operatorname{Pol}}_l[z]_{\operatorname{even}}$
 to a polynomial of two variables $s$ and $t$
 by 
\begin{equation}
\label{eqn:Ilg}
(I_l g)(s,t):=s^{\frac l 2} g \left(\frac{t}{\sqrt s}\right).  
\end{equation}
If $g(z)$ is given as $g(z)=\sum_{j=0}^{[\frac l2]} a_j z^{l-2j}$, 
 then the definition \eqref{eqn:Ilg} yields
\begin{equation}
\label{eqn:Ilgst}
 (I_l g)(s,t)=s^{\frac l 2}g \left(\frac{t}{\sqrt s}\right)=\sum_{j=0}^{[\frac l 2]} a_j s^j t^{l-2j}.  
\end{equation}
We note that 
$(I_l g)(s^2, t)$ is a homogeneous polynomial
 of $s$ and $t$ of degree $l$.  
The following example reveals 
 how Juhl's conformally covariant differential operators
 \cite{Juhl, KOSS} arise.

\begin{example}
\label{ex:PsiC}
Let $\widetilde C_l^{\alpha}(z)$ be the renormalized Gegenbauer polynomial
 defined by 
\[
   \widetilde C_l^{\alpha}(z)
   =
  \frac{1}{\Gamma(\alpha+ [\frac{l+1}{2}])}
  \sum_{k=0}^{[\frac{l}{2}]}
  (-1)^k
  \frac{\Gamma(l-k+\alpha)}{k! (l-2k)!}
  (2z)^{l-2k}.  
\]
We adopt the normalization given in \cite[(14.3)]{KKP}
 so that $\widetilde C_l^{\alpha}(z)$ is 
 a nonzero element in ${\operatorname{Pol}}_l[z]_{\operatorname{even}}$
 for any $\alpha \in {\mathbb{C}}$.  
Suppose $\nu - \lambda \in {\mathbb{N}}$.  
Then Juhl's conformally covariant operator $\Ctbb \lambda \nu{}$
 (see \eqref{eqn:Cln})
 is expressed as 
\[
   \Ctbb \lambda \nu{}= {\operatorname{Rest}}_{x_n=0} \circ P, 
\]
where $P$ is a homogeneous differential operator
 of order $\nu-\lambda$ on ${\mathbb{R}}^n$
 given by 
\[
   P:=(I_{\nu-\lambda}\widetilde C_{\nu-\lambda}^{\lambda - \frac{n-1}{2}})(-\Delta_{\mathbb{R}^{n-1}}, \frac{\partial}{\partial x_n}).  
\]

We also define a distribution on ${\mathbb{R}}^n$
 supported at the origin by
\begin{equation}
\label{eqn:PsiJuhl}
   \Ctcal{\lambda}{\nu}{}
   :=
   (I_{\nu-\lambda} \widetilde C_{\nu-\lambda}^{\lambda -\frac{n-1}{2}})
   (-\Delta_{{\mathbb{R}}^{n-1}},\frac{\partial}{\partial x_n}) 
     \delta(x_1, \cdots, x_n).  
\end{equation}
This is the distribution kernel 
 of the scalar-valued, 
 differential symmetry breaking operator $\Ctbb \lambda \nu {}$
 (see \eqref{eqn:Cln}), 
 and was denoted by $\KC$
in \cite[(10.3)]{sbon}
when $\nu- \lambda \in 2{\mathbb{N}}$.  
In terms of the map 
 $\Psi \colon {\mathbb{C}}[x,\frac{\partial}{\partial x}] \to {\mathcal{D}}_{\{0\}}({\mathbb{R}}^n)$ defined in \eqref{eqn:Psi}, 
 Lemma \ref{lem:160205} shows
\begin{align*}
  \Ctcal \lambda \nu {}=&\Psi (P), 
\\
  \Ctbb \lambda \nu{}=&\operatorname{Rest}_{x_n=0} \circ \,\Ctcal \lambda \nu{} \ast.  
\end{align*}
\end{example}
We begin with some basic computations
 in the Weyl algebra ${\mathbb{C}}[x,\frac{\partial}{\partial x}]$
 modulo the left ideal ${\mathcal{J}}$.  
\begin{lemma}
\label{lem:1532100}
Suppose $g(z)\in \operatorname{Pol}_l[z]_{\operatorname{even}}$.  
Let $\theta= z \frac{d}{d z}$.  
\begin{enumerate}
\item[{\rm{(1)}}]
For any $p$ with $1 \le p \le n-1$, 
 we have
\[
x_p (I_l g)(-\Delta_{\mathbb{R}^{n-1}},\frac{\partial}{\partial x_n})
\equiv
\frac{\partial}{\partial x_p} I_{l-2}((l-\theta) g)(-\Delta_{\mathbb{R}^{n-1}},\frac{\partial}{\partial x_n})
\mod {\mathcal{J}}.  
\]
\item[{\rm{(2)}}]
$
x_n (I_l g)(-\Delta_{\mathbb{R}^{n-1}},\frac{\partial}{\partial x_n})
\equiv
-(I_{l-1} g')(-\Delta_{\mathbb{R}^{n-1}},\frac{\partial}{\partial x_n})
\mod {\mathcal{J}}.  
$
\end{enumerate}
\end{lemma}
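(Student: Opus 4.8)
The plan is to reduce everything to a single monomial and then carry out one commutator computation in the Weyl algebra modulo ${\mathcal{J}}$, matching the outcome against the combinatorial action of $l-\theta$ (resp.\ of $\frac{d}{dz}$) on monomials. Since both sides of (1) and (2) are linear in $g$, it suffices to treat $g(z)=z^{l-2k}$ with $0\le 2k\le l$. For such $g$, formula \eqref{eqn:Ilgst} gives $(I_lg)(-\Delta_{\mathbb{R}^{n-1}},\frac{\partial}{\partial x_n})=(-\Delta_{\mathbb{R}^{n-1}})^{k}(\frac{\partial}{\partial x_n})^{l-2k}$, so in both parts the assertion becomes a statement about how $x_p$, resp.\ $x_n$, moves past a product of powers of $\Delta_{\mathbb{R}^{n-1}}$ and $\frac{\partial}{\partial x_n}$, modulo ${\mathcal{J}}$.

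For part (1) I would first record the mechanism that drives the whole section: modulo the left ideal ${\mathcal{J}}$, left multiplication by a coordinate function collapses to a commutator, since $P\,x_p\in{\mathcal{J}}$ for every $P$, hence $x_p P\equiv[x_p,P]\bmod{\mathcal{J}}$. As $p\le n-1$, $x_p$ commutes with $\frac{\partial}{\partial x_n}$, and the commutation relation \eqref{eqn:152376} underlying Lemma \ref{lem:152327}(1) — which involves only $x_1,\dots,x_{n-1}$ and $\frac{\partial}{\partial x_1},\dots,\frac{\partial}{\partial x_{n-1}}$ — may be applied verbatim to $\Delta_{\mathbb{R}^{n-1}}$. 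Tracking the sign coming from $(-\Delta_{\mathbb{R}^{n-1}})^{k}=(-1)^{k}\Delta_{\mathbb{R}^{n-1}}^{k}$, this gives $x_p(-\Delta_{\mathbb{R}^{n-1}})^{k}(\frac{\partial}{\partial x_n})^{l-2k}\equiv 2k\,\frac{\partial}{\partial x_p}(-\Delta_{\mathbb{R}^{n-1}})^{k-1}(\frac{\partial}{\partial x_n})^{l-2k}\bmod{\mathcal{J}}$. On the other side, $(l-\theta)z^{l-2k}=2k\,z^{l-2k}$; reading $2k\,z^{l-2k}=2k\,z^{(l-2)-2(k-1)}$ as an element of $\operatorname{Pol}_{l-2}[z]_{\operatorname{even}}$ and applying \eqref{eqn:Ilgst} once more yields $I_{l-2}((l-\theta)z^{l-2k})(-\Delta_{\mathbb{R}^{n-1}},\frac{\partial}{\partial x_n})=2k\,(-\Delta_{\mathbb{R}^{n-1}})^{k-1}(\frac{\partial}{\partial x_n})^{l-2k}$, so prepending $\frac{\partial}{\partial x_p}$ produces exactly the left-hand side.

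For part (2) the computation is lighter: $x_n$ commutes with $\Delta_{\mathbb{R}^{n-1}}$, so I only need the elementary identity $[x_n,(\frac{\partial}{\partial x_n})^{m}]=-m(\frac{\partial}{\partial x_n})^{m-1}$ together with $(-\Delta_{\mathbb{R}^{n-1}})^{k}(\frac{\partial}{\partial x_n})^{m}x_n\in{\mathcal{J}}$, which gives, with $m=l-2k$, the congruence $x_n(-\Delta_{\mathbb{R}^{n-1}})^{k}(\frac{\partial}{\partial x_n})^{l-2k}\equiv-(l-2k)(-\Delta_{\mathbb{R}^{n-1}})^{k}(\frac{\partial}{\partial x_n})^{l-2k-1}\bmod{\mathcal{J}}$. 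Since $g'(z)=(l-2k)z^{(l-1)-2k}$, formula \eqref{eqn:Ilgst} identifies $-(I_{l-1}g')(-\Delta_{\mathbb{R}^{n-1}},\frac{\partial}{\partial x_n})$ with precisely this expression, closing the case.

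I do not expect a real obstacle: the lemma is squarely an exercise in the formalism of Section \ref{subsec:Walg}. The only points that demand care are bookkeeping ones: keeping the drop of degree in the inflation operators (from $l$ to $l-2$ in (1), from $l$ to $l-1$ in (2)) synchronized with the reindexing $k\mapsto k-1$; getting the sign from $(-\Delta_{\mathbb{R}^{n-1}})^{k}$ right; and verifying the boundary cases, namely $k=0$ in (1), where $x_p(\frac{\partial}{\partial x_n})^{l}\equiv 0$ because $(\frac{\partial}{\partial x_n})^{l}x_p\in{\mathcal{J}}$, matching the fact that $l-\theta$ kills the top monomial, and $l-2k=0$ in (2), where $x_n(-\Delta_{\mathbb{R}^{n-1}})^{l/2}\equiv 0$ for the same reason, matching the vanishing of $\frac{d}{dz}$ on constants.
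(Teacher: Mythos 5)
Your proof is correct and follows essentially the same route as the paper: expand $g$ into monomials (the paper keeps the sum $\sum_j a_j z^{l-2j}$, you reduce to a single $z^{l-2k}$ by linearity), apply the commutation relation behind Lemma \ref{lem:152327}(1) for $\Delta_{\mathbb{R}^{n-1}}$ resp.\ the elementary identity $[x_n,(\tfrac{\partial}{\partial x_n})^m]=-m(\tfrac{\partial}{\partial x_n})^{m-1}$, and match the result with the action of $l-\theta$ resp.\ $\tfrac{d}{dz}$ under the inflation $I_l$; the paper merely packages this last matching through the identities $\tfrac{\partial}{\partial s}(I_lg)=\tfrac12 I_{l-2}((l-\theta)g)$ and $\tfrac{\partial}{\partial t}(I_lg)=I_{l-1}(g')$, which you verify monomial by monomial instead. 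Your sign bookkeeping for $(-\Delta_{\mathbb{R}^{n-1}})^k$ and the boundary cases $k=0$ and $l-2k=0$ are handled correctly.
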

\begin{proof}
Applying $\frac{\partial}{\partial s}$ and $\frac{\partial}{\partial t}$
 to the equation \eqref{eqn:Ilgst}, 
 we get
\begin{alignat}{2}
\label{eqn:sIg}
\frac {\partial}{\partial s} (I_l g)(s,t)
&= \frac 1 2 I_{l-2} ((l-\theta) g)(s,t)
&&= \sum_j a_j j s^{j-1} t^{l-2j}, 
\\
\label{eqn:tIg}
\frac {\partial}{\partial s} (I_l g)(s,t)
&=
(I_{l-1} g')(s,t)
&&=\sum_{j} a_j(l-2j) s^j t^{l-2j-1}.  
\end{alignat}
\par\noindent
(1)\enspace
By Lemma \ref{lem:152327}, 
\begin{equation}
\label{eqn:153432}
x_p (I_l g)(-\Delta_{\mathbb{R}^{n-1}},\frac{\partial}{\partial x_n})
\equiv
2 \frac{\partial}{\partial x_p}\sum_j a_j j (-\Delta_{\mathbb{R}^{n-1}})^{j-1}
(\frac{\partial}{\partial x_n})^{l-2j}
\mod {\mathcal{J}}.  
\end{equation}
By \eqref{eqn:sIg}, 
 the right-hand side of \eqref{eqn:153432} amounts to
\[
\frac {\partial}{\partial x_p}I_{l-2} ((l-\theta) g)(-\Delta_{\mathbb{R}^{n-1}},\frac{\partial}{\partial x_n})\mod {\mathcal{J}}.  
\]
\par\noindent
(2)\enspace
Since
$x_n (\frac{\partial}{\partial x_n})^{l-2j} 
\equiv -(l-2j) (\frac{\partial}{\partial x_n})^{l-2j-1} \mod {\mathcal{J}}$, 
 we have
\begin{equation}
\label{eqn:1534}
x_n (I_l g)(-\Delta_{\mathbb{R}^{n-1}},\frac{\partial}{\partial x_n})
\equiv
-\sum_j a_j (l-2j) (-\Delta_{\mathbb{R}^{n-1}})^j
(\frac{\partial}{\partial x_n})^{l-2j}
\mod {\mathcal{J}}.  
\end{equation}
By \eqref{eqn:tIg}, 
 we get the desired formula.  
\end{proof}

\begin{lemma}
\label{lem:1532101}
Suppose $\lambda, \nu \in {\mathbb{C}}$
 with $\nu-\lambda \in {\mathbb{N}}$.  
Let $\Ctcal \lambda \nu {}$ be the distribution 
 on ${\mathbb{R}}^{n-1}$ 
 defined in \eqref{eqn:PsiJuhl}.  
Then the following equations hold.  
\begin{align}
\label{eqn:160170}
x_p \Ctcal \lambda \nu {} 
=& - 2 \frac{\partial}{\partial x_p} \Ctcal {\lambda+1}{\nu-1}{}
\qquad
\text{for $1 \le p \le n-1$, }
\\
\label{eqn:1532101}
x_n \Ctcal \lambda \nu {} =& - 2 \gamma(\lambda-\frac{n-1}{2}, \nu-\lambda)
                             \Ctcal {\lambda+1}{\nu}{}.  
\end{align}
\end{lemma}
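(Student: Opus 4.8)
The plan is to transport the computation to the Weyl algebra ${\mathbb{C}}[x,\frac{\partial}{\partial x}]$ modulo the left ideal ${\mathcal{J}}$ via the isomorphism $\overline{\Psi}$ of \eqref{eqn:WalgD}. By Example \ref{ex:PsiC}, the distribution $\Ctcal \lambda \nu {}$ of \eqref{eqn:PsiJuhl} corresponds under $\overline{\Psi}$ to the class of the homogeneous differential operator $P:=(I_l\,\widetilde C_l^{\alpha})(-\Delta_{{\mathbb{R}}^{n-1}},\frac{\partial}{\partial x_n})$, where I write $l:=\nu-\lambda\in{\mathbb{N}}$, $\alpha:=\lambda-\frac{n-1}{2}$, and $I_l$ is the inflation map of \eqref{eqn:Ilg}; moreover the operation $\Ctcal \lambda \nu {}\mapsto x_p\Ctcal \lambda \nu {}$ on distributions supported at the origin corresponds to left multiplication by $x_p$ in ${\mathbb{C}}[x,\frac{\partial}{\partial x}]/{\mathcal{J}}$. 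Since $\Ctcal{\lambda+1}{\nu-1}{}$ and $\Ctcal{\lambda+1}{\nu}{}$ correspond in the same way to $(I_{l-2}\,\widetilde C_{l-2}^{\alpha+1})(-\Delta_{{\mathbb{R}}^{n-1}},\frac{\partial}{\partial x_n})$ and $(I_{l-1}\,\widetilde C_{l-1}^{\alpha+1})(-\Delta_{{\mathbb{R}}^{n-1}},\frac{\partial}{\partial x_n})$, Lemma \ref{lem:1532100} reduces the two asserted identities \eqref{eqn:160170} and \eqref{eqn:1532101} to the following two contiguity relations for the renormalized Gegenbauer polynomials, with $\theta=z\frac{d}{dz}$:
\[
(l-\theta)\widetilde C_l^{\alpha}(z)=-2\,\widetilde C_{l-2}^{\alpha+1}(z),
\qquad
\frac{d}{dz}\widetilde C_l^{\alpha}(z)=2\,\gamma(\alpha,l)\,\widetilde C_{l-1}^{\alpha+1}(z).
\]
Indeed, feeding the first relation into Lemma \ref{lem:1532100}(1) gives $x_p P\equiv-2\frac{\partial}{\partial x_p}(I_{l-2}\,\widetilde C_{l-2}^{\alpha+1})(-\Delta_{{\mathbb{R}}^{n-1}},\frac{\partial}{\partial x_n})\mod{\mathcal{J}}$, while the second relation together with Lemma \ref{lem:1532100}(2) gives $x_n P\equiv-2\gamma(\alpha,l)(I_{l-1}\,\widetilde C_{l-1}^{\alpha+1})(-\Delta_{{\mathbb{R}}^{n-1}},\frac{\partial}{\partial x_n})\mod{\mathcal{J}}$; applying $\overline{\Psi}$ and noting $\gamma(\alpha,l)=\gamma(\lambda-\frac{n-1}{2},\nu-\lambda)$ then yields \eqref{eqn:160170} and \eqref{eqn:1532101}.

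Both contiguity relations will be established by a direct term-by-term computation from the explicit expansion of $\widetilde C_l^{\alpha}$ recalled in Example \ref{ex:PsiC}. The operator $l-\theta$ annihilates the leading monomial $z^l$ and multiplies the summand carrying $z^{l-2k}$ by $2k$; after the index shift $k\mapsto k+1$ the resulting sum is, coefficient by coefficient, a scalar multiple of the expansion of $\widetilde C_{l-2}^{\alpha+1}$, and the two normalizing factors $\Gamma(\alpha+[\frac{l+1}{2}])$ and $\Gamma((\alpha+1)+[\frac{l-1}{2}])$ turn out to be literally equal, which pins the constant down to $-2$. The derivative $\frac{d}{dz}$ similarly multiplies the summand carrying $z^{l-2k}$ by $l-2k$ and lowers the degree by one, and after the analogous re-indexing the leftover ratio of normalizing factors $\Gamma((\alpha+1)+[\frac{l}{2}])/\Gamma(\alpha+[\frac{l+1}{2}])$ equals $1$ when $l$ is odd and $\alpha+\frac{l}{2}$ when $l$ is even, which is precisely $\gamma(\alpha,l)$ as defined in \eqref{eqn:gamma}.

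The only delicate point is this bookkeeping of the normalizing Gamma factors and of the floor functions $[\frac{l+1}{2}]$ as the degree drops from $l$ to $l-2$ or to $l-1$; it is exactly here that the parity of $l$ intervenes, forcing the constant in the derivative relation to be the case-dependent quantity $\gamma(\alpha,l)$ rather than a single rational expression in $\alpha$ and $l$. Everything else reduces to the routine manipulations in ${\mathbb{C}}[x,\frac{\partial}{\partial x}]/{\mathcal{J}}$ already packaged in Lemma \ref{lem:1532100}.
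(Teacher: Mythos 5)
Your proposal is correct and follows essentially the same route as the paper: transport the identities to ${\mathbb{C}}[x,\frac{\partial}{\partial x}]/{\mathcal{J}}$, invoke Lemma \ref{lem:1532100}, and reduce to the two Gegenbauer contiguity relations \eqref{eqn:diffzC}--\eqref{eqn:diffC}. The only divergence is that the paper cites these contiguity relations from the literature, whereas you verify them directly from the explicit expansion of $\widetilde C_l^{\alpha}$; your bookkeeping of the floor functions and the Gamma-factor ratio $\Gamma((\alpha+1)+[\tfrac l2])/\Gamma(\alpha+[\tfrac{l+1}{2}])=\gamma(\alpha,l)$ is correct.
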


\begin{proof}
We recall from \cite{AAR99}, or \cite[(14.8)]{KKP} and 
\cite[(A.13), (A.14)]{KOSS}
 that the normalized Gegenbauer polynomial 
 $\widetilde C_l^{\alpha}(z)$ satisfies the following relations
\begin{align}
\label{eqn:diffzC}
  (l-z \frac{d}{d z}) \widetilde C_l^{\alpha}(z) 
  =& -2 \widetilde C_{l-2}^{\alpha+1}(z), 
\\
\label{eqn:diffC}
\frac d {dz} \widetilde C_l^{\alpha}(z)
=&
2 \gamma (\alpha,l)
\widetilde C_{l-1}^{\alpha+1}(z), 
\end{align}
for $\alpha \in {\mathbb{C}}$
 and $l \in {\mathbb{N}}_+$.

Applying Lemma \ref{lem:1532100} (1) 
 to $g(z)= \widetilde C_l^{\alpha}(z)$, 
 we get from \eqref{eqn:diffzC} the following identity 
in ${\mathbb{C}}[x, \frac{\partial}{\partial x}]/{\mathcal{J}}$:
\[
   x_p (I_l \widetilde C_l^{\alpha})(-\Delta_{\mathbb{R}^{n-1}}, \frac{\partial}{\partial x_n})
\equiv -2 \frac{\partial}{\partial x_p}
       (I_{l-2} \widetilde C_{l-2}^{\alpha+1})(-\Delta_{\mathbb{R}^{n-1}}, \frac{\partial}{\partial x_n})
\mod {\mathcal{J}}.  
\]
Hence
 \eqref{eqn:160170} is verified by putting $\alpha = \lambda - \frac{n-1}{2}$
 and $l = \nu-\lambda$.  
Likewise,
 applying Lemma \ref{lem:1532100} (2)
 to $g(z) = \widetilde C_{\nu-\lambda}^{\lambda-\frac{n-1}{2}}(z)$, 
 we get \eqref{eqn:1532101} from \eqref{eqn:diffC}.  
\end{proof}

\begin{proposition}
\label{prop:152383}
Suppose $1 \le p, q \le n-1$, 
 and $\lambda, \nu \in {\mathbb{C}}$.  
Then we have the following equations
 in ${\mathcal{D}}'({\mathbb{R}}^n)$.  
\begin{align}
\label{eqn:152383n}
   x_p x_n \Ctcal{\lambda-1}{\nu+1}{}
   =&
   4 \gamma(\lambda-\frac{n-1}{2},\nu-\lambda) \frac{\partial}{\partial x_p}
   \Ctcal{\lambda+1}{\nu}{}.  
\\
x_p x_q \Ctcal{\lambda-1}{\nu+1}{}
=&
4 \frac{\partial^2}{\partial x_p \partial x_q}\Ctcal{\lambda+1}{\nu-1}{}.  
\label{eqn:1523101}
\\
x_p^2 \Ctcal{\lambda-1}{\nu+1}{}
=&
4 \frac{\partial^2}{\partial x_p^2}\Ctcal{\lambda+1}{\nu-1}{}
+
2 \Ctcal{\lambda}{\nu}{}.  
\label{eqn:1523114}
\\
x_n^2 \Ctcal{\lambda-1}{\nu+1}{}
=&
4 (\lambda - \frac {n-1}{2} + [\frac{\nu-\lambda+1}{2}])
  \Ctcal{\lambda+1}{\nu+1}{}  
\notag
\\
=& 2 (2 \nu - n +1)\Ctcal{\lambda}{\nu}{}
   -4 \Delta_{\mathbb{R}^{n-1}} \Ctcal{\lambda+1}{\nu-1}{}.  
\label{eqn:152403}
\\
Q_{n-1}(x) \Ctcal{\lambda-1}{\nu+1}{}
=&
4 \nu \Ctcal{\lambda}{\nu}{}
-4 (\lambda-\frac{n-1}{2}+[\frac{\nu-\lambda+1}{2}])
\Ctcal{\lambda+1}{\nu+1}{}.  
\label{eqn:160201}
\end{align}
\end{proposition}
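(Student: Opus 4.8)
The plan is to derive the five identities of Proposition~\ref{prop:152383} from Lemma~\ref{lem:1532101}, the canonical commutation relations $\frac{\partial}{\partial x_p}x_p = x_p\frac{\partial}{\partial x_p}+1$ and $\frac{\partial}{\partial x_p}x_q = x_q\frac{\partial}{\partial x_p}$ ($p\ne q$), and one classical contiguous relation for the renormalized Gegenbauer polynomials. The three identities \eqref{eqn:152383n}, \eqref{eqn:1523101} and \eqref{eqn:1523114} follow by applying Lemma~\ref{lem:1532101} twice. Starting from $x_q\Ctcal{\lambda-1}{\nu+1}{} = -2\frac{\partial}{\partial x_q}\Ctcal{\lambda}{\nu}{}$ for $1\le q\le n-1$ (which is \eqref{eqn:160170} with $(\lambda,\nu)$ replaced by $(\lambda-1,\nu+1)$), I would multiply on the left by $x_p$, move $x_p$ past $\frac{\partial}{\partial x_q}$ with the commutation relations, and apply \eqref{eqn:160170} once more: for $p\ne q$ ($p\le n-1$) this gives \eqref{eqn:1523101}, and for $p=q$ the correction term from $x_p\frac{\partial}{\partial x_p} = \frac{\partial}{\partial x_p}x_p-1$ produces the extra summand $2\Ctcal{\lambda}{\nu}{}$, giving \eqref{eqn:1523114}. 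Multiplying instead by $x_n$ (which commutes with $\frac{\partial}{\partial x_p}$, $p<n$) and using \eqref{eqn:1532101} in the second step gives \eqref{eqn:152383n}. These steps are routine once the commutator bookkeeping is organized.

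The first equality in \eqref{eqn:152403} is obtained by applying \eqref{eqn:1532101} twice, first to $\Ctcal{\lambda-1}{\nu+1}{}$ and then to the intermediate $\Ctcal{\lambda}{\nu+1}{}$; this produces $x_n^2\Ctcal{\lambda-1}{\nu+1}{} = 4\,\gamma(\lambda-1-\tfrac{n-1}{2},\,\nu-\lambda+2)\,\gamma(\lambda-\tfrac{n-1}{2},\,\nu-\lambda+1)\,\Ctcal{\lambda+1}{\nu+1}{}$. Two elementary facts about the function \eqref{eqn:gamma}, each checked by splitting on the parity of $a\in\mathbb N$, then finish it: $\gamma(\mu-1,a+2) = \gamma(\mu,a)$, which rewrites the first $\gamma$-factor as $\gamma(\lambda-\tfrac{n-1}{2},\nu-\lambda)$; and $\gamma(\mu,a)\gamma(\mu,a+1) = \mu+[\tfrac{a+1}{2}]$ for every $\mu\in\mathbb C$. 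Together they yield the coefficient $4\bigl(\lambda-\tfrac{n-1}{2}+[\tfrac{\nu-\lambda+1}{2}]\bigr)$ of the first equality.

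The core of the proof is the second equality in \eqref{eqn:152403}. Granting the first equality, it is equivalent (using $2\nu-n+1=2(\nu-\tfrac{n-1}{2})$) to
\[
   \bigl(\lambda-\tfrac{n-1}{2}+[\tfrac{\nu-\lambda+1}{2}]\bigr)\Ctcal{\lambda+1}{\nu+1}{}
   = \bigl(\nu-\tfrac{n-1}{2}\bigr)\Ctcal{\lambda}{\nu}{} - \Delta_{\mathbb{R}^{n-1}}\Ctcal{\lambda+1}{\nu-1}{},
\]
and I would prove this by passing to symbols. All three distributions are supported at the origin, so by the isomorphism \eqref{eqn:WalgD} it is enough to check the corresponding relation among the defining symbols \eqref{eqn:PsiJuhl}; since the inflation map $g\mapsto I_l g$ is injective on $\operatorname{Pol}_l[z]_{\operatorname{even}}$ with $l=\nu-\lambda$ (see \eqref{eqn:Ilgst}), this in turn reduces to a polynomial identity in $z$. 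Writing $\mu=\lambda-\tfrac{n-1}{2}$ and using $\Ctcal{\lambda+1}{\nu+1}{}\leftrightarrow\widetilde C_l^{\mu+1}$, $\Ctcal{\lambda}{\nu}{}\leftrightarrow\widetilde C_l^{\mu}$, and $\Delta_{\mathbb{R}^{n-1}}\Ctcal{\lambda+1}{\nu-1}{}\leftrightarrow-\widetilde C_{l-2}^{\mu+1}$ (the last because $\Delta_{\mathbb{R}^{n-1}}$ acts on symbols as multiplication by $-s$, with $s=-\Delta_{\mathbb{R}^{n-1}}$, carrying $I_{l-2}(\widetilde C_{l-2}^{\mu+1})$ to $-I_l(\widetilde C_{l-2}^{\mu+1})$), the identity becomes
\[
   \bigl(z\tfrac{d}{dz}+2\mu+l\bigr)\widetilde C_l^{\mu}(z) = 2\bigl(\mu+[\tfrac{l+1}{2}]\bigr)\widetilde C_l^{\mu+1}(z),
\]
equivalently $\bigl(\mu+[\tfrac{l+1}{2}]\bigr)\widetilde C_l^{\mu+1} = (\mu+l)\widetilde C_l^{\mu}+\widetilde C_{l-2}^{\mu+1}$ after substituting \eqref{eqn:diffzC}. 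This is the renormalization of the classical Gegenbauer contiguous relation $C_l^{\mu} = \frac{\mu}{l+\mu}\bigl(C_l^{\mu+1}-C_{l-2}^{\mu+1}\bigr)$ (\cite{AAR99}), and I expect the passage through the normalizing factor $\Gamma(\alpha+[\tfrac{l+1}{2}])^{-1}$ relating $\widetilde C_l^{\alpha}$ to $C_l^{\alpha}$ to be the only genuinely delicate step. Finally \eqref{eqn:160201} drops out at once: summing \eqref{eqn:1523114} over $p=1,\dots,n-1$ gives $Q_{n-1}(x)\Ctcal{\lambda-1}{\nu+1}{} = 4\Delta_{\mathbb{R}^{n-1}}\Ctcal{\lambda+1}{\nu-1}{}+2(n-1)\Ctcal{\lambda}{\nu}{}$, and substituting the second equality of \eqref{eqn:152403} just established collapses the coefficient of $\Ctcal{\lambda}{\nu}{}$ to $4\nu$.
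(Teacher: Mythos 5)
Your proof is correct and follows essentially the same route as the paper: the iterated application of Lemma \ref{lem:1532101} (with exactly the commutator bookkeeping you describe) yields everything except the second equality of \eqref{eqn:152403}, which the paper likewise reduces to the three-term relation for the renormalized Gegenbauer polynomials --- you supply a proof of that relation from the classical contiguous relation $C_l^{\mu}=\frac{\mu}{l+\mu}(C_l^{\mu+1}-C_{l-2}^{\mu+1})$, whereas the paper simply cites \cite[(9.10)]{KKP}. Note that your index $\widetilde C_{l-2}^{\mu+1}$ is the correct one (forced by parity, since all three terms must lie in ${\operatorname{Pol}}_l[z]_{\operatorname{even}}$), so the term $\widetilde C_{a-1}^{\mu+1}$ in \eqref{eqn:152563} should be read as $\widetilde C_{a-2}^{\mu+1}$.
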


\begin{proof}
The iterated use of Lemma \ref{lem:1532101}
 yields these identities except for the second equality
 in \eqref{eqn:152403}.  
The second equality is nothing but the three-term relation \cite[(9.10)]{KKP}
\[
(\lambda-\frac{n-1}{2} + [\frac{\nu-\lambda+1}{2}])\Ctcal{\lambda+1}{\nu+1}{}
=
(\nu - \frac{n-1}{2}) \Ctcal{\lambda}{\nu}{}
- \Delta_{\mathbb{R}^{n-1}} \Ctcal{\lambda+1}{\nu-1}{}, 
\]
which is derived from the following three-term relation 
 of the Gegenbauer polynomials:
\begin{equation}
\label{eqn:152563}
(\mu+a) \widetilde C_a ^{\mu}(t)
+\widetilde C_{a-1} ^{\mu+1}(t)
=(\mu+[\frac{a+1}{2}])
\widetilde C_a ^{\mu+1}(t).  
\end{equation}
\end{proof}

\section{Reduction to the scalar-valued case}
\label{sec:PropC}
Our strategy to find the residue of the matrix-valued operators
 $\Abb \lambda \nu {\varepsilon}{i,j}$ ($\varepsilon = \pm$) is
 to reduce it to the scalar-valued case 
 by giving an expression 
 of the following form
 (see Lemma \ref{lem:AIJ} below):  
\begin{equation}
\label{eqn:QHA}
  \Abb \lambda \nu {\varepsilon}{i,j}
  =
  Q(\lambda,\nu)^{-1} H(x) \Atbb {\lambda+a}{\nu-b}+{}.  
\end{equation}
where the primary features of the three factors are as follows:
\begin{enumerate}
\item[$\bullet$]
$Q(\lambda,\nu)$ is a polynomial of $(\lambda,\nu)$;
\item[$\bullet$]
$H(x)$ is a ${\operatorname{Hom}}_{\mathbb{C}}(\Exterior^i({\mathbb{C}}^n), \Exterior^j({\mathbb{C}}^{n-1}))$-valued
 polynomial of $x$;
\item[$\bullet$]
$\Atbb {\lambda+a}{\nu-b}+{}$ is the {\it{scalar-valued}} regular symmetry breaking operator
 by shift $(a,b) \in {\mathbb{N}}^2$.  
\end{enumerate}

\subsection{Polynomial $g_{I J}(x)$}
\label{subec:gIJ}
For $x=(x_1, \cdots, x_n) \ne 0$, 
 we recall $\psi_n(x)=I_n-2|x|^{-2} x\, {}^{t\!}x$
 where $|x|^2=x_1^2+\cdots+x_n^2$.  
To find the matrix-valued polynomial $H(x)$
 in \eqref{eqn:QHA}, 
 we introduce the following polynomials $g_{I J}(x)$
 for $I \in {\mathfrak{I}}_{n,i}$,
 $J \in {\mathfrak{I}}_{n-1,j}$
 with $j \in \{i-1, i\}$:
\begin{equation}
\label{eqn:gIJpr}
g_{I J}(x):=|x|^2 \langle \pr i j \circ \sigma^{(i)}(\psi(x)) e_I,
 e_J^{\vee}\rangle.  
\end{equation}
\begin{lemma}
\label{lem:171274}
$g_{I J}(x)$ is a polynomial of 
 $x= (x_1, \cdots, x_{n-1}, x_n)$
 given by
\begin{equation}
\label{eqn:gIJ}
  g_{I J}
  =
  \begin{cases}
  -S_{J I}
  \qquad
  &\text{for $j=i$}, 
\\
  (-1)^i S_{J \cup \{n\}, I}
  \qquad
  &\text{for $j=i-1$}.   
  \end{cases}
\end{equation}
\end{lemma}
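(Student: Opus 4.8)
The plan is to reduce the evaluation of $g_{IJ}(x)$ to the minor-determinant identity of Lemma~\ref{lem:minordet}, which already computes the matrix entries of $\sigma^{(i)}(\psi(x))$ up to the factor $|x|^{-2}$.

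First I would unwind what the projections $\pr ii$ and $\pr i{i-1}$ do on the basis $\{e_K\}$, reading this off from the identification \eqref{eqn:symb}. From that identification, $\pr ii\colon\Exterior^i({\mathbb{C}}^n)\to\Exterior^i({\mathbb{C}}^{n-1})$ is the pullback that fixes $e_K$ when $n\notin K$ and kills $e_K$ when $n\in K$, whereas $\pr i{i-1}\colon\Exterior^i({\mathbb{C}}^n)\to\Exterior^{i-1}({\mathbb{C}}^{n-1})$ corresponds on the exterior factor to the interior product $\iota_{\partial/\partial x_n}$, so that $\pr i{i-1}(e_K)=0$ for $n\notin K$ and $\pr i{i-1}(e_{K'}\wedge e_n)=(-1)^{i-1}e_{K'}$ for $K'\in{\mathfrak{I}}_{n-1,i-1}$, the sign being the one produced by contracting the last factor of a product of $i$ one-forms. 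Next I would expand $\sigma^{(i)}(\psi(x))\,e_I=\sum_{K\in{\mathfrak{I}}_{n,i}}(\det\psi(x))_{KI}\,e_K$, where $(\det\psi(x))_{KI}$ denotes the $i\times i$ minor of $\psi(x)$ with row set $K$ and column set $I$; this matches the convention already in force in the proof of Theorem~\ref{thm:170183}, in which the $(P,Q)$-component of a matrix-valued kernel is $\langle\,\cdot\,e_P,e_Q^{\vee}\rangle$ and hence equals the minor with rows $Q$ and columns $P$.

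With these in hand the two cases are immediate. For $j=i$ and $J\in{\mathfrak{I}}_{n-1,i}$ one has $n\notin J$, so $\pr ii$ leaves the coefficient of $e_J$ untouched, and \eqref{eqn:gIJpr} with Lemma~\ref{lem:minordet} gives $g_{IJ}(x)=|x|^2(\det\psi(x))_{JI}=-S_{JI}(x)$. For $j=i-1$ and $J\in{\mathfrak{I}}_{n-1,i-1}$, applying $\pr i{i-1}$ extracts the coefficient of $e_J\wedge e_n$ with the extra factor $(-1)^{i-1}$, i.e.\ $(-1)^{i-1}(\det\psi(x))_{J\cup\{n\},I}$; multiplying by $|x|^2$ and invoking Lemma~\ref{lem:minordet} for the size-$i$ subsets $J\cup\{n\},I\subset\{1,\dots,n\}$ then yields $g_{IJ}(x)=(-1)^{i-1}\cdot(-1)\,S_{J\cup\{n\},I}(x)=(-1)^{i}S_{J\cup\{n\},I}(x)$. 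The claim that $g_{IJ}$ is a polynomial is then automatic, since each $S_{KL}$ is, by \eqref{eqn:SIJ}, a polynomial in $x$ of degree at most two.

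The main obstacle here is purely the sign and index bookkeeping: I must keep straight that the ``$(I,J)$-component $=\langle\,\cdot\,e_I,e_J^{\vee}\rangle$'' convention transposes $I$ and $J$ relative to the minor, and that the factor $(-1)^{i-1}$ coming from $\iota_{\partial/\partial x_n}$ via \eqref{eqn:symb} combines with the $-1$ from Lemma~\ref{lem:minordet} to produce exactly the $(-1)^{i}$ in \eqref{eqn:gIJ}. As a sanity check I would verify the extreme cases $i=j=0$, where $g_{\emptyset\emptyset}=|x|^2=-S_{\emptyset\emptyset}$, and $(i,j)=(n,n-1)$, where $g_{\{1,\dots,n\},\{1,\dots,n-1\}}=(-1)^n|x|^2=(-1)^n S_{\{1,\dots,n\},\{1,\dots,n\}}$.
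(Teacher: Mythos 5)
Your proposal is correct and follows essentially the same route as the paper's own proof: expand $\sigma^{(i)}(\psi(x))e_I$ in minors, apply the explicit action of $\pr ii$ and $\pr i{i-1}$ on the basis (including the $(-1)^{i-1}$ from the interior product), and conclude via Lemma \ref{lem:minordet}. The sign bookkeeping and the sanity checks are consistent with the stated formula, so nothing further is needed.
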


\begin{proof}
The right-hand side of \eqref{eqn:gIJpr} amounts to 
\[
   |x|^2 \sum_{K \in {\mathfrak{I}}_{n,i}}(\det \psi_n(x))_{K I}
   \langle \pr i j (e_K), e_J^{\vee}\rangle.  
\]
The projection $\pr i j \colon \Exterior^i({\mathbb{C}}^n) \to \Exterior^j({\mathbb{C}}^{n-1})$
 in Section \ref{subsec:intmero}
 sends the basis $\{e_I\}$
 as follows.  
\begin{equation*}
   \pr i i(e_I)
   = 
  \begin{cases}
  e_I \qquad \text{for $n \not \in I$}, 
\\
  0\qquad \hphantom{i}\text{for $n \in I$}, 
  \end{cases}
\quad
\pr i {i-1}(e_I)
   = 
  \begin{cases}
  0 \qquad \hphantom{MMMMNi}\text{for $n \not \in I$}, 
\\
  (-1)^{i-1} e_{I \setminus \{n\}}\qquad \text{for $n \in I$}.  
  \end{cases}
\end{equation*}
Therefore,
 we get the desired result by \eqref{eqn:psidet}.  
\end{proof}

\subsection{Matrix components of $\Atbb \lambda \nu \pm {i,j}$}
\label{subec:IJA}
For $I \in {\mathfrak {I}}_{n,i}$ and $J \in {\mathfrak {I}}_{n-1,j}$, 
 we define 
$
   (\Atcal \lambda \nu \pm {i,j})_{I J} \in {\mathcal{D}}'({\mathbb{R}}^n)
$
 as the $(I,J)$-component 
 of the distribution kernel $\Atcal \lambda \nu \pm {i,j}$
 of the matrix-valued symmetry breaking operator
 $\Atbb \lambda \nu \pm {i,j}$ by 
\[
   (\Atcal \lambda \nu {\pm}{i,j})_{I J}
   :=
   \langle 
   \Atcal \lambda \nu {\pm}{i,j} (e_I), 
   e_J^{\vee}
   \rangle.  
\]
We compare the matrix-valued symmetry breaking operator
 with the {\it{shifted}} scalar-valued one.  
We set 
\begin{equation}
\label{eqn:Ascalar}
\Atcal \lambda \nu + {}
:=
a_+(\lambda,\nu)|x|^{-2\nu}|x_n|^{\lambda+\nu-n}.  
\end{equation}
This is a distribution 
 with holomorphic parameter $(\lambda,\nu) \in {\mathbb{C}}^2$, 
 and is identified with $\Atcal \lambda \nu +{0,0}$.  
\begin{lemma}
\label{lem:AIJ}
The matrix component $(\Atcal \lambda \nu {\pm}{i,j})_{I J}$  
 takes the following form:
\begin{align}
  (\Atcal \lambda \nu {+}{i,j})_{I J}
  =&
  \frac{2}{\lambda-\nu-2} g_{I J} \Atcal {\lambda-1} {\nu+1} +{}, 
\label{eqn:AijIJ}
\\
  (\Atcal \lambda \nu {-}{i,j})_{I J}
  =&
    \frac{2}{(\lambda+\nu-n)(\lambda-\nu-1)(\lambda-\nu-3)} x_n g_{I J} 
    \Atcal {\lambda-2} {\nu+1} +{}.  
\label{eqn:152292}
\end{align}
\end{lemma}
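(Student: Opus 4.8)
The plan is to peel off the matrix indices and reduce the lemma to the defining formulas of Section~\ref{subsec:intmero} together with an elementary manipulation of Gamma factors; nothing beyond careful bookkeeping is involved. Throughout I would work first in the range $\operatorname{Re}\lambda \gg |\operatorname{Re}\nu|$, where every object below is an honest locally integrable function, so that all the identities are identities of functions, and propagate them to all of $(\lambda,\nu)\in\mathbb{C}^2$ only at the end via Fact~\ref{fact:holo}.

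For the $+$ case I would take the $(I,J)$-matrix component of the kernel: by \eqref{eqn:Aijp} and \eqref{eqn:Aijn}, together with the defining relation \eqref{eqn:gIJpr} in the form $\langle \pr{i}{j} \circ \sigma^{(i)}(\psi(x))\,e_I,e_J^{\vee}\rangle = |x|^{-2}g_{I J}(x)$, one gets
\[
(\Atcal{\lambda}{\nu}{+}{i,j})_{I J}
= a_+(\lambda,\nu)\,g_{I J}(x)\,|x|^{-2(\nu+1)}\,|x_n|^{(\lambda-1)+(\nu+1)-n}.
\]
The last two factors are, by \eqref{eqn:Ascalar} with $(\lambda,\nu)$ replaced by $(\lambda-1,\nu+1)$, exactly $a_+(\lambda-1,\nu+1)^{-1}\,\Atcal{\lambda-1}{\nu+1}{+}{}$; since $g_{I J}$ is a polynomial by Lemma~\ref{lem:171274}, multiplication by it keeps us inside the space of distributions, and \eqref{eqn:AijIJ} follows with the scalar $a_+(\lambda,\nu)/a_+(\lambda-1,\nu+1)$ in place of $\tfrac{2}{\lambda-\nu-2}$.

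For the $-$ case I would first record the identity of meromorphic families of distributions on the real line $|x_n|^{\lambda+\nu-n}\operatorname{sgn}(x_n) = x_n\,|x_n|^{\lambda+\nu-n-1}$, which holds because the two sides coincide as locally integrable functions for $\operatorname{Re}\lambda \gg |\operatorname{Re}\nu|$ and hence everywhere by analytic continuation. Substituting this into \eqref{eqn:Aijm}, taking the $(I,J)$-component as before, and pulling out the polynomial factor $x_n g_{I J}(x)$, the remaining factor $|x|^{-2(\nu+1)}|x_n|^{(\lambda-2)+(\nu+1)-n}$ equals $a_+(\lambda-2,\nu+1)^{-1}\,\Atcal{\lambda-2}{\nu+1}{+}{}$ by \eqref{eqn:Ascalar}. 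This yields \eqref{eqn:152292} with the scalar $a_-(\lambda,\nu)/a_+(\lambda-2,\nu+1)$ in front of $x_n g_{I J}$.

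It then only remains to evaluate the two scalars from \eqref{eqn:aln} using $\Gamma(z+1)=z\Gamma(z)$: in the $+$ case the first Gamma factors of $a_+(\lambda,\nu)$ and $a_+(\lambda-1,\nu+1)$ cancel, leaving $\Gamma(\tfrac{\lambda-\nu-2}{2})/\Gamma(\tfrac{\lambda-\nu}{2})=\tfrac{2}{\lambda-\nu-2}$; in the $-$ case the first Gamma factors produce the denominator factor $\lambda+\nu-n$ and the second Gamma factors produce the denominator factor $(\lambda-\nu-1)(\lambda-\nu-3)$, reproducing the rational function in \eqref{eqn:152292}. To upgrade the resulting equalities from $\operatorname{Re}\lambda \gg |\operatorname{Re}\nu|$ to all of $\mathbb{C}^2$ one uses that, by Fact~\ref{fact:holo}, each of $\Atcal{\lambda}{\nu}{+}{i,j}$, $\Atcal{\lambda}{\nu}{-}{i,j}$, $\Atcal{\lambda-1}{\nu+1}{+}{}$, and $\Atcal{\lambda-2}{\nu+1}{+}{}$ is a holomorphic family of distributions in $(\lambda,\nu)$, so an identity holding on a nonempty open subset of $\mathbb{C}^2$ holds identically. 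I do not expect a genuine obstacle: the only delicate points are keeping track of the parameter shifts and of which Gamma factor cancels against which, and making the routine but necessary remark that identities valid where the kernels are locally integrable extend by the identity theorem.
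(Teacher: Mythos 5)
Your proposal follows the paper's proof exactly: take the $(I,J)$-component via \eqref{eqn:gIJpr}, pull out $g_{IJ}$ (and, in the $-$ case, the polynomial $x_n$ via $|x_n|^a\operatorname{sgn}(x_n)=x_n|x_n|^{a-1}$), recognize the remaining scalar-valued factor as the normalized kernel $\Atcal{\lambda'}{\nu'}{+}{}$ with the appropriate shift, and reduce the constant to a ratio of $a_\pm$ factors, propagating from $\operatorname{Re}\lambda\gg|\operatorname{Re}\nu|$ to all of $\mathbb{C}^2$ by the holomorphy guaranteed by Fact~\ref{fact:holo}. This is precisely what the paper does, so the approach is correct.

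One caution, though: you assert without carrying it out that the Gamma-factor ratio in the $-$ case \emph{reproduces} the rational function printed in \eqref{eqn:152292}. If you actually perform the cancellation you find
\[
\frac{a_-(\lambda,\nu)}{a_+(\lambda-2,\nu+1)}
=\frac{\Gamma\bigl(\frac{\lambda+\nu-n}{2}\bigr)\Gamma\bigl(\frac{\lambda-\nu-3}{2}\bigr)}{\Gamma\bigl(\frac{\lambda+\nu-n+2}{2}\bigr)\Gamma\bigl(\frac{\lambda-\nu+1}{2}\bigr)}
=\frac{2}{\lambda+\nu-n}\cdot\frac{4}{(\lambda-\nu-1)(\lambda-\nu-3)}
=\frac{8}{(\lambda+\nu-n)(\lambda-\nu-1)(\lambda-\nu-3)},
\]
which differs from the coefficient $\tfrac{2}{(\lambda+\nu-n)(\lambda-\nu-1)(\lambda-\nu-3)}$ stated in the lemma by a factor of $4$; a numerical spot-check (e.g.\ $n=2$, $i=j=0$, $\lambda=10$, $\nu=0$) confirms the $8$. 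This appears to be a typo in the paper's statement of \eqref{eqn:152292} (the paper's own proof is also too terse to catch it), but your write-up should either carry out the ratio explicitly or flag the discrepancy rather than assert agreement with the printed formula.
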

\begin{proof}
If ${\operatorname{Re}} \lambda \gg |{\operatorname{Re}} \nu|$, 
 then the definition \eqref{eqn:Aijn}
 of $\Atcal \lambda \nu {+}{i,j}$ shows that
\begin{align*}
(\Atcal \lambda \nu {+}{i,j})_{I J}
=& a_+(\lambda, \nu) |x|^{-2 \nu} |x_n|^{\lambda + \nu-n}
   \langle \pr ij \circ \sigma^{(i)}(\psi(x))(e_I), e_J^{\vee} \rangle 
\\
=& a_+(\lambda, \nu) |x|^{-2 \nu-2} |x_n|^{\lambda + \nu-n}
   g_{I J}(x), 
\end{align*}
where the second equality follows from the definition \eqref{eqn:gIJpr}
of $g_{I J}(x)$.  
In view of the definition \eqref{eqn:Ascalar}
 of the scalar-valued distribution 
 $\Atcal \lambda \nu {+}{}$, 
 the identity \eqref{eqn:AijIJ}
 holds for 
${\operatorname{Re}} \lambda \gg |{\operatorname{Re}} \nu|$.  
A similar computation tells 
 that the identity \eqref{eqn:152292} holds
 for 
${\operatorname{Re}} \lambda \gg |{\operatorname{Re}} \nu|$.  
Since $g_{I J}$ is a polynomial, 
 the right-hand sides of \eqref{eqn:AijIJ} and \eqref{eqn:152292}
 are well-defined distributions
 on ${\mathbb{R}}^n$
 that depend meromorphically on $(\lambda,\nu) \in {\mathbb{C}}^2$.  
On the other hand,
 the left-hand sides of \eqref{eqn:AijIJ} and \eqref{eqn:152292}
 depend holomorphically 
 on $(\lambda,\nu) \in {\mathbb{C}}^2$
 by Fact \ref{fact:holo}.  
Therefore, 
 the identities \eqref{eqn:AijIJ} and \eqref{eqn:152292} are proved
 for all $(\lambda,\nu) \in {\mathbb{C}}^2$
 by analytic continuation.  
\end{proof}

\subsection{Review on the scalar-valued case}
\label{subec:qAC}
In the case $i=j=0$, 
 the operator $\Atbb \lambda \nu + {0,0}$ is identified
 with scalar-valued one
\[
 \Atbb \lambda \nu + {} \colon C_c^{\infty}({\mathbb{R}}^{n})
  \to C^{\infty}({\mathbb{R}}^{n-1})
\]
with the distribution kernel 
$
\Atcal \lambda \nu + {} \equiv \Atcal \lambda \nu + {0,0}, 
$
 which was thoroughly studied
 in \cite{sbon}.  
In particular, 
we recall from \cite{xk2014}
 (see also \cite[Thm.~12.2 (2)]{sbon}) 
 the residue formula
 for the {\it{scalar-valued}} symmetry breaking operators
 as follows.  
For $(\lambda, \nu)\in {\mathbb{C}}^2$
 with $\nu-\lambda \in 2{\mathbb{N}}$, 
 we set
\begin{equation}
\label{eqn:qAC}
q_C^A(\lambda, \nu):= \frac{(-1)^{\frac {\nu-\lambda}{2}} 
                            (\frac {\nu-\lambda}{2})! 
                            \pi^{\frac{n-1}{2}}}
                           {2^{\nu-\lambda} \ \Gamma(\nu)}.  
\end{equation}
\begin{fact}
[residue formula in the scalar-valued case]
\label{fact:resAC}
Suppose $(\lambda, \nu)\in {\mathbb{C}}^2$
 satisfies $\nu-\lambda  \in 2{\mathbb{N}}$.  
Then we have
\[
   \Atbb \lambda \nu + {} = q_C^A (\lambda, \nu) \Ctbb \lambda \nu {}.  
\]
\end{fact}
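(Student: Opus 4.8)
The plan is to show that, once $\nu-\lambda\in 2{\mathbb{N}}$, the normalization forces the kernel $\Atcal \lambda \nu + {}$ to be a distribution supported at the origin, and then to identify that distribution with Juhl's kernel $\Ctcal \lambda \nu {}$ by a homogeneity-and-invariance count together with one explicit coefficient. (The asserted equality of operators reduces at once to the equality of distribution kernels on ${\mathbb{R}}^n$; this scalar case is the one settled in \cite{sbon, xk2014}, so I only sketch how one reconstructs it.)

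First I would record why the kernel collapses onto $\{0\}$. Put $m:=\tfrac{\nu-\lambda}{2}\in{\mathbb{N}}$. On the locus $\nu-\lambda=2m$ the second factor of $a_+(\lambda,\nu)^{-1}=\Gamma(\tfrac{\lambda+\nu-n+1}{2})\Gamma(\tfrac{\lambda-\nu}{2})$ equals $\Gamma(-m)$, so $a_+$ has a simple zero there, while the first factor cancels exactly the poles of the one-variable distribution $|x_n|^{\lambda+\nu-n}$, which are the only poles of $|x|^{-2\nu}|x_n|^{\lambda+\nu-n}$ on ${\mathbb{R}}^n\setminus\{0\}$. Hence $\Atcal \lambda \nu + {}$ is holomorphic in $(\lambda,\nu)$ on ${\mathbb{R}}^n\setminus\{0\}$ and vanishes identically when $\nu-\lambda\in 2{\mathbb{N}}$; being a globally defined tempered distribution by Fact \ref{fact:holo}, it is then supported at $\{0\}$. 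Since $|x|^{-2\nu}|x_n|^{\lambda+\nu-n}$ is homogeneous of degree $\lambda-\nu-n=-2m-n$ and invariant under $O(n-1)\times O(1)$ acting on $(x_1,\dots,x_{n-1})\times (x_n)$, and since a distribution supported at $0$ of that degree and invariant under this group is necessarily $\sum_{k=0}^{m} c_k(\lambda,\nu)\,(\Delta_{{\mathbb{R}}^{n-1}})^k(\tfrac{\partial}{\partial x_n})^{2m-2k}\delta(x)$ — the $O(n-1)$-invariants in ${\mathbb{C}}[\xi_1,\dots,\xi_{n-1}]$ being generated by $|\xi'|^2$ — Lemma \ref{lem:160205} shows that $\Atcal \lambda \nu + {}$ has exactly the shape of $\Ctcal \lambda \nu {}$ in Example \ref{ex:PsiC}. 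It then remains to identify the $(m+1)$-tuple $(c_k)$ with $q_C^A(\lambda,\nu)$ times the coefficient vector of $\widetilde C_{2m}^{\lambda-\frac{n-1}{2}}$.

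For the constant there are two routes. (a) The kernel $|x|^{-2\nu}|x_n|^{\lambda+\nu-n}$ obeys ladder relations under multiplication by $x_p$ ($1\le p\le n-1$) and by $x_n$ of the same shape as those satisfied by $\Ctcal \lambda \nu {}$ in Lemma \ref{lem:1532101} and Proposition \ref{prop:152383} (both reflecting the Gegenbauer contiguity relations), and these, starting from the base point $\nu=\lambda$ where both sides reduce to the restriction map times $\pi^{\frac{n-1}{2}}/\Gamma(\lambda)$, propagate proportionality from the hyperplane $\nu-\lambda=2(m-1)$ to $\nu-\lambda=2m$. (b) Alternatively, a Fourier transform in $x'=(x_1,\dots,x_{n-1})$ turns the kernel into a multiple of $|\xi'|^{\nu-\frac{n-1}{2}}K_{\nu-\frac{n-1}{2}}(|\xi'|\,|x_n|)$ up to a power of $|x_n|$, and the residue along $\nu-\lambda\in2{\mathbb{N}}$ is governed by the degeneration of the modified Bessel function to a Gegenbauer polynomial — the route of \cite{sbon}. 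Either way the scale is read off from one coefficient, most cheaply that of $(\tfrac{\partial}{\partial x_n})^{2m}\delta$: pairing $\Atcal \lambda \nu + {}$ with a test function of the form $x_n^{2m}$ times a cutoff in $x_n$ reduces (for $\operatorname{Re}\lambda\gg|\operatorname{Re}\nu|$) to the Beta integral $\int_{{\mathbb{R}}^{n-1}}(|x'|^2+x_n^2)^{-\nu}\,dx'=\pi^{\frac{n-1}{2}}\tfrac{\Gamma(\nu-\frac{n-1}{2})}{\Gamma(\nu)}|x_n|^{\,n-1-2\nu}$ times a one-variable distribution whose normalized residue at the relevant integer is the $n=1$ case of Fact \ref{fact:Rieszres}; collecting the Gamma factors and comparing with the leading coefficient of $\widetilde C_{2m}^{\lambda-\frac{n-1}{2}}$ produces exactly $q_C^A(\lambda,\nu)$ of \eqref{eqn:qAC}.

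The main obstacle is precisely this last step: each coefficient vector is a string of ratios of Gamma values, and one must keep track of the signs, the powers of $2$, and the cancellation of the $\Gamma(\nu-\frac{n-1}{2})$-type factors carefully in order to land on exactly the closed form \eqref{eqn:qAC}; moreover one needs the small but genuine point that matching a single coefficient does force proportionality, for which route (a) — or, equivalently, the uniqueness of intertwining operators at these parameters (cf.\ \cite{KKP}) — must be invoked. By contrast, the collapse onto the origin and the homogeneity/invariance normal form are soft.
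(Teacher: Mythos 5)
First, a structural remark: the paper does not prove Fact~\ref{fact:resAC} at all --- it is imported as a known result from \cite{xk2014} and \cite[Thm.~12.2~(2)]{sbon}, and is only \emph{used} here (through Lemma~\ref{lem:AIJ} and Proposition~\ref{prop:AresgC}) to derive the matrix-valued residue formula. So there is no internal proof to compare yours with; what you have written is a reconstruction of the cited scalar-valued proof, and its skeleton is indeed the standard one. Your soft steps are correct: on ${\mathbb{R}}^n\setminus\{0\}$ the normalized kernel equals $\Gamma(\tfrac{\lambda-\nu}{2})^{-1}$ times a family that is holomorphic there, so it vanishes off the origin when $\nu-\lambda\in2{\mathbb{N}}$ and the globally holomorphic kernel (Fact~\ref{fact:holo}) is supported at $\{0\}$; homogeneity of degree $\lambda-\nu-n$ together with $O(n-1)\times O(1)$-invariance gives the normal form $\sum_k c_k\,\Delta_{{\mathbb{R}}^{n-1}}^k(\tfrac{\partial}{\partial x_n})^{2m-2k}\delta$; and the base value $\pi^{\frac{n-1}{2}}/\Gamma(\lambda)$ at $\nu=\lambda$ is right.

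The genuine gap is exactly where you locate it, and it is not a small one for this particular statement: the entire content of the Fact is the closed constant $q_C^A(\lambda,\nu)$ of \eqref{eqn:qAC}, and you never actually produce it --- you only indicate two ways one might. Moreover the reduction to ``match one coefficient'' is not yet watertight as written. (i) The normal form alone does not force proportionality to the Gegenbauer coefficient vector; you need either the one-dimensionality of differential symmetry breaking operators at these parameters (Juhl, \cite{KKP}) applied to the limit operator, or a computation of \emph{all} the $c_k$; your route (a) is asserted in one sentence, and its analogues of Lemma~\ref{lem:1532101} and Proposition~\ref{prop:152383} for the kernel $|x|^{-2\nu}|x_n|^{\lambda+\nu-n}$ have to be written down with their constants, with the extra complication that multiplication by $x_n$ leaves the even family and lands in the $\operatorname{sgn}(x_n)$-kernel, and that one must check the resulting system determines the degree-$2m$ distribution uniquely. (ii) The coefficient you propose to match, that of $(\tfrac{\partial}{\partial x_n})^{2m}\delta$, equals $q_C^A$ times the top coefficient of $\widetilde C_{2m}^{\lambda-\frac{n-1}{2}}$, which vanishes for finitely many special $\lambda$ on the hyperplane (zeros of $(\lambda-\tfrac{n-1}{2}+m)\cdots(\lambda-\tfrac{n-1}{2}+2m-1)$), so you must add a genericity-in-$\lambda$ plus holomorphy argument to cover all points. (iii) In the pairing computation the test function must be cut off in $x'$ as well, and the resulting error term (supported away from $x'=0$) has to be shown to die against the zero of $\Gamma(\tfrac{\lambda-\nu}{2})^{-1}$ before the Beta-integral identity may be used. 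With (i)--(iii) supplied and the Gamma/sign/power-of-two bookkeeping actually carried out, your outline does yield a proof along the lines of \cite{sbon}; as it stands it is a plan for one.
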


\subsection{Reduction to the scalar-valued case}
We are ready to formulate an intermediate step
 for the proof of Theorem \ref{thm:153316a}
 on the residue formula
 of the matrix-valued regular symmetry breaking operators
 $\Atbb \lambda \nu \pm {i,j}$.  
We recall that $g_{I J}(x)$ is a quadratic polynomial of $x$
 (see Lemma \ref{lem:171274})
 and $\Ctcal \lambda \nu {}{}$ is a distribution on ${\mathbb{R}}^n$
 supported at the origin 
 (see \eqref{eqn:PsiJuhl}).  
Then we have the following.  

\begin{proposition}
\label{prop:AresgC}
\begin{enumerate}
\item[{\rm{(1)}}]
If $\nu-\lambda=2m$ with $m \in {\mathbb{N}}$, 
 then 
\[
  (\Atcal \lambda \nu + {i,j})_{I J}
  =
  \frac{(-1)^m m! \pi^{\frac{n-1}{2}}}{2^{2m+2} \Gamma(\nu+1)} g_{I J}(x) \Ctcal {\lambda-1}{\nu+1}{}.  
\]
\item[{\rm{(2)}}]
If $\nu-\lambda=2m+1$ with $m \in {\mathbb{N}}$, 
 then 
\[
  (\Atcal \lambda \nu - {i,j})_{I J}
  =
  \frac{(-1)^m m! \pi^{\frac{n-1}{2}}}{2^{2m+5} (\lambda+\nu-n) \Gamma(\nu+1)} 
  x_n g_{I J}(x) \Ctcal {\lambda-2}{\nu+1}{}.  
\]
\end{enumerate}
\end{proposition}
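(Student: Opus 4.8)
The plan is to reduce Proposition \ref{prop:AresgC} to the scalar-valued residue formula (Fact \ref{fact:resAC}) via the matrix-component expression from Lemma \ref{lem:AIJ}. For part (1), I start from \eqref{eqn:AijIJ}:
\[
  (\Atcal \lambda \nu {+}{i,j})_{I J}
  =
  \frac{2}{\lambda-\nu-2}\, g_{I J}\, \Atcal {\lambda-1}{\nu+1}{+}{}.
\]
When $\nu - \lambda = 2m$, the shifted parameters satisfy $(\nu+1) - (\lambda-1) = 2m+2 \in 2\mathbb{N}$, so Fact \ref{fact:resAC} applies to $\Atbb {\lambda-1}{\nu+1}{+}{}$ and gives $\Atcal {\lambda-1}{\nu+1}{+}{} = q_C^A(\lambda-1, \nu+1)\, \Ctcal{\lambda-1}{\nu+1}{}$ as distributions supported at the origin. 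Substituting the explicit value of $q_C^A(\lambda-1,\nu+1)$ from \eqref{eqn:qAC} (with $\frac{(\nu+1)-(\lambda-1)}{2} = m+1$, so the numerator contributes $(-1)^{m+1}(m+1)!\,\pi^{\frac{n-1}{2}}$ and the denominator $2^{2m+2}\,\Gamma(\nu+1)$), and absorbing the prefactor $\frac{2}{\lambda-\nu-2} = \frac{2}{-(2m+2)} = \frac{-1}{m+1}$, the factor $(m+1)!$ collapses to $m!$ and the sign $(-1)^{m+1}$ becomes $(-1)^m$. This yields exactly the claimed constant $\frac{(-1)^m m!\,\pi^{\frac{n-1}{2}}}{2^{2m+2}\,\Gamma(\nu+1)}$.

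For part (2), I argue analogously starting from \eqref{eqn:152292}:
\[
  (\Atcal \lambda \nu {-}{i,j})_{I J}
  =
  \frac{2}{(\lambda+\nu-n)(\lambda-\nu-1)(\lambda-\nu-3)}\, x_n g_{I J}\,
  \Atcal {\lambda-2}{\nu+1}{+}{}.
\]
When $\nu-\lambda = 2m+1$, the shifted scalar parameters satisfy $(\nu+1)-(\lambda-2) = 2m+4 \in 2\mathbb{N}$, so Fact \ref{fact:resAC} again applies: $\Atcal {\lambda-2}{\nu+1}{+}{} = q_C^A(\lambda-2,\nu+1)\, \Ctcal{\lambda-2}{\nu+1}{}$, where $\frac{(\nu+1)-(\lambda-2)}{2} = m+2$. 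Thus $q_C^A(\lambda-2,\nu+1) = \frac{(-1)^{m}(m+2)!\,\pi^{\frac{n-1}{2}}}{2^{2m+4}\,\Gamma(\nu+1)}$ (since $(-1)^{m+2}=(-1)^m$). The remaining rational prefactor evaluates at $\nu-\lambda=2m+1$ to $\frac{2}{(\lambda+\nu-n)(-(2m+2))(-(2m+4))} = \frac{2}{(\lambda+\nu-n)\cdot 4(m+1)(m+2)} = \frac{1}{2(\lambda+\nu-n)(m+1)(m+2)}$, which cancels $(m+2)!$ down to $m!$ and contributes an extra factor $2^{-1}$, changing $2^{2m+4}$ into $2^{2m+5}$. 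Collecting everything gives the stated constant $\frac{(-1)^m m!\,\pi^{\frac{n-1}{2}}}{2^{2m+5}(\lambda+\nu-n)\,\Gamma(\nu+1)}$.

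The substance of the proof is therefore bookkeeping: tracking the shift in the Gegenbauer/Gamma normalization and verifying that the polynomial prefactors in Lemma \ref{lem:AIJ}, when specialized to the residue locus, exactly convert the factorial $(m+k)!$ appearing in $q_C^A$ of the shifted parameters into the $m!$ of the target formula, while producing the correct power of $2$ and sign. One should also note that all identities in Lemma \ref{lem:AIJ} were proved for all $(\lambda,\nu) \in \mathbb{C}^2$ by analytic continuation, so no convergence issue arises at the residue locus; the right-hand sides are genuine distributions supported at the origin because $\Ctcal{\cdot}{\cdot}{}$ is. The only mild subtlety — and the point most likely to need care — is confirming that Fact \ref{fact:resAC} is being invoked at parameter values where its hypothesis $\nu'-\lambda' \in 2\mathbb{N}$ genuinely holds (it does: $+2$ in case (1), $+3$ in case (2), both preserving the parity $2\mathbb{N}$ of the original constraint), so that the scalar reduction is legitimate and no spurious pole of $q_C^A$ or of the prefactor appears.
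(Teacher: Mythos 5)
Your proposal is correct and is essentially the paper's own argument: both start from the matrix-component identities \eqref{eqn:AijIJ} and \eqref{eqn:152292} of Lemma \ref{lem:AIJ}, apply the scalar residue formula (Fact \ref{fact:resAC}) to the shifted operators $\Atbb{\lambda-1}{\nu+1}{+}{}$ and $\Atbb{\lambda-2}{\nu+1}{+}{}$, and evaluate the rational prefactors on the residue locus. Your bookkeeping of $q_C^A$ (the factors $(m+1)!$ and $(m+2)!$ collapsing to $m!$, the sign, and the powers of $2$) checks out against the stated constants.
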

\begin{proof}
[Proof of Proposition \ref{prop:AresgC}]
(1)\enspace
By Fact \ref{fact:resAC}, 
 we have 
\[
\Atcal {\lambda-1}{\nu+1}{+}{} 
= q_C^A (\lambda-1, \nu+1)
\Ctcal {\lambda-1}{\nu+1}{}
\]
if $\nu- \lambda \in \{-2,0,2,4,\cdots\}$.  
Combining this with \eqref{eqn:AijIJ}, 
 we obtain
\[
(\Atcal \lambda \nu {+} {i,j})_{I J}
=
\frac{2}{\lambda-\nu-2}
q_C^A (\lambda-1,\nu+1)
\Ctcal{\lambda-1}{\nu+1}{}
g_{I J}.  
\]
Now a simple computation shows
 the first statement.  
\par\noindent
(2)\enspace
Suppose $\nu-\lambda =2m+1$
 with $m \in {\mathbb{N}}$.  
By Fact \ref{fact:resAC}, 
 we have 
\[
\Atcal {\lambda-2} {\nu+1} {+} {}{} 
=
q_C^A (\lambda-2,\nu+1)
\Ctcal {\lambda-2}{\nu+1}{}.  
\]
Then \eqref{eqn:152292} and Fact \ref{fact:resAC} tell that 
\begin{align*}
(\Atcal {\lambda}{\nu}{-}{i,j})_{I J}
=& 
\frac{x_n g_{I J} \Atcal {\lambda-2}{\nu+1}{+}{}}
     {2(\lambda+\nu-n)(m+1)(m+2)}
\\
=&
\frac{q_C^A (\lambda-2,\nu+1)}
     {2(\lambda+\nu-n)(m+1)(m+2)}
x_n g_{I J} \Ctcal {\lambda-2}{\nu+1}{}.  
\end{align*}
Thus the second statement is shown.  
\end{proof}

In order to find a closed expression
 of the right-hand sides of the formul{\ae}
 in Proposition \ref{prop:AresgC}, 
 we shall apply in the next section
 the identities in ${\mathbb{C}}[x,\frac{\partial}{\partial x}]/{\mathcal{J}}$
 proved in Section \ref{subsec:Weylalg}.  
In particular,
 we shall see
 in Proposition \ref{prop:153294}
 that a 
$
   {\operatorname{Hom}}_{\mathbb{C}}(\Exterior^i({\mathbb{C}}^n), 
                                     \Exterior^j({\mathbb{C}}^{n-1}))
$-valued distribution
 on ${\mathbb{R}}^n$
 ($j=i-1,i$) 
whose $(I,J)$-component
 is equal to $g_{I J}(x) \Ctcal {\lambda-1}{\nu+1}{}$
 recovers
 the differential symmetry breaking operator
\[
  \Cbb {\lambda}{\nu}{i,j}
  \colon 
  {\mathcal{E}}^i({\mathbb{R}}^n) \to {\mathcal{E}}^j({\mathbb{R}}^{n-1})
\]
 given in Section \ref{subsec:residue}.

\section{Proof of Theorem \ref{thm:153316a}}
\label{subsec:respf}
In this section we complete the proof 
 of the residue formula
 of the {\it{matrix-valued}} symmetry breaking operator
 $\Atbb \lambda \nu \pm {i,j}$
 by comparing the $(I,J)$-component
 of the equation \eqref{eqn:resAvecC}.

\subsection{Matrix components
of matrix-valued differential symmetry breaking operators
 $\Cbb \lambda \nu {i,j}$}
\label{subsec:IJC}
We recall the matrix-valued differential operator from
 Section \ref{subsec:residue}:
\[
  \Cbb \lambda \nu {i,j} \colon {\mathcal{E}}^i({\mathbb{R}}^n)
  \to {\mathcal{E}}^j({\mathbb{R}}^{n-1})
  \quad
  (j=i-1,i).  
\]

For $I \in {\mathfrak {I}}_{n,i}$ and $J \in {\mathfrak {I}}_{n-1,j}$, 
 we define a linear map 
$(\Cbb \lambda \nu {i,j})_{I J} \colon C^{\infty}({\mathbb{R}}^n) \to C^{\infty}({\mathbb{R}}^{n-1})$ as the $(I,J)$-component
 of the differential operator $\Cbb \lambda \nu {i,j}$, 
 which is characterized by the formula
\[
   \Cbb \lambda \nu {i,j}(f(x) d x_I)
   =
   \sum_{J \in {\mathfrak{I}}_{n-1,j}} ((\Cbb \lambda \nu {i,j})_{I J}f) d x_J.
\]  
Then there exist uniquely distributions
 $(\Ccal \lambda \nu {i,j})_{I J}$ on ${\mathbb{R}}^n$
 supported at the origin
 such that 
\[
   (\Cbb \lambda \nu {i,j})_{I J}
   =
   \operatorname{Rest}_{x_n=0} 
   \circ
   (\Ccal \lambda \nu {i,j})_{I J} \ast.  
\]
The explicit formul{\ae} of $(\Cbb \lambda \nu {i,j})_{I J}$
 (or equivalently, of $(\Ccal \lambda \nu {i,j})_{I J}$)
 are given in Facts \ref{fact:153284} and \ref{fact:161478} below.

Both the distributions $(\Ccal \lambda \nu {i,j})_{I J}$ and 
 $g_{I J}(x)\Ctcal {\lambda-1}{\nu+1}{}$ are distributions 
on ${\mathbb{R}}^n$ supported at the origin.  
We give its relationship as follows.   
\begin{proposition}
\label{prop:153294}
Let $j=i$ or $i-1$.  
For any $I \in {\mathcal{I}}_{n,i}$
 and $J \in {\mathcal{I}}_{n-1,j}$, 
 we have 
\begin{equation}
\label{eqn:gIJC}
  g_{I J}(x) \Ctcal{\lambda-1}{\nu+1}{}
  =
  8 (-1)^{i-j} ({\mathcal{C}}_{\lambda,\nu}^{i,j})_{I J}.  
\end{equation}
\end{proposition}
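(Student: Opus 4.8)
The statement to establish is the identity \eqref{eqn:gIJC}, which relates the distribution $g_{IJ}(x)\,\Ctcal{\lambda-1}{\nu+1}{}$ — a specific quadratic polynomial times Juhl's distribution — to the $(I,J)$-matrix component of the differential operator $\Cbb\lambda\nu{i,j}$. The plan is to compute the left-hand side directly by inserting the explicit form of $g_{IJ}$ from Lemma \ref{lem:171274} and then applying, term by term, the Weyl-algebra identities of Proposition \ref{prop:152383} to move the quadratic polynomial through $\Ctcal{\lambda-1}{\nu+1}{}$, landing in a linear combination of $\Ctcal{\lambda+1}{\nu-1}{}$, $\Ctcal\lambda\nu{}$, and $\Ctcal{\lambda+1}{\nu+1}{}$ with differential-operator coefficients. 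The right-hand side is, by definition, built from exactly these three shifted Juhl distributions (through the defining formulas for $\Cbb\lambda\nu{i,i}$ and $\Cbb\lambda\nu{i,i-1}$ in Section \ref{subsec:residue}), so the identity reduces to matching coefficients of each shifted Juhl distribution.

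First I would split into the two cases $j=i$ and $j=i-1$, since Lemma \ref{lem:171274} gives $g_{IJ}=-S_{JI}$ in the former and $g_{IJ}=(-1)^iS_{J\cup\{n\},I}$ in the latter. In each case I would subdivide according to the three possibilities for $S$ in \eqref{eqn:SIJ}: $I=J$ (a diagonal-type sum $\sum_k\varepsilon_I(k)x_k^2$, which must be separated into the $k\le n-1$ part and the $k=n$ part, since $x_n$ and $x_p$ behave differently under Proposition \ref{prop:152383}), $\#(I\setminus J)=1$ (a single term $\mathrm{sgn}(\cdot)x_px_q$, again with the sub-cases $p,q\le n-1$ versus one index equal to $n$), and $\#(I\setminus J)\ge 2$, where both sides vanish. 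For each surviving configuration I would apply the relevant line of Proposition \ref{prop:152383}: \eqref{eqn:1523101} for $x_px_q$ with $p,q\le n-1$, \eqref{eqn:1523114} for $x_p^2$, \eqref{eqn:152403} for $x_n^2$, \eqref{eqn:152383n} for $x_px_n$, and the quadratic form $Q_{n-1}(x)=\sum_{p\le n-1}x_p^2$ (appearing when one assembles the diagonal sum) is handled by \eqref{eqn:160201}. Summing these contributions produces an expression in $\partial^2/\partial x_p\partial x_q\,\Ctcal{\lambda+1}{\nu-1}{}$, $\Ctcal\lambda\nu{}$, and $\Ctcal{\lambda+1}{\nu+1}{}$.

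The final step is bookkeeping: compare the resulting expression, component by component, with $8(-1)^{i-j}(\Ccal\lambda\nu{i,j})_{IJ}$, where the right-hand side is read off from the formulas for $\Cbb\lambda\nu{i,j}$ in Section \ref{subsec:residue} using the elementary identities for $dd^\ast$, $d^\ast d$, $d\iota_{\partial/\partial x_n}$, $d^\ast$, and $\iota_{\partial/\partial x_n}$ acting on $f\,dx_I$ (the $dd^\ast$ and $d^\ast d$ pieces are exactly the ones computed in the proof of Lemma \ref{lem:170181}, now in $n-1$ variables; the $\iota_{\partial/\partial x_n}$ and $d$, $d^\ast$ pieces tied to the $x_n$-direction give the off-codimension terms and the precise role of $n\in I$). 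One checks that the constants $\gamma(\lambda-\tfrac n2,\nu-\lambda)$ and $\gamma(\lambda-\tfrac{n-1}2,\nu-\lambda)$ from \eqref{eqn:gamma}, together with the $\tfrac12(\nu-i)$ and $\tfrac12(\lambda+i-n)$ weights, match the coefficients produced by Proposition \ref{prop:152383}; the factor $[\tfrac{\nu-\lambda+1}{2}]$ appearing in \eqref{eqn:152403} and \eqref{eqn:160201} is exactly what supplies the parity-dependent $\gamma$.

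The main obstacle I anticipate is the last matching step for $j=i-1$ and for the $x_n^2$ / diagonal contributions when $n\in I$: here several terms of different shift types ($\Ctcal\lambda\nu{}$ versus $\Ctcal{\lambda+1}{\nu+1}{}$) collide, the sign $(-1)^i$ from $g_{IJ}$ interacts with the sign $(-1)^{i-j}$ on the right, and one must invoke the three-term relation \eqref{eqn:152563} (equivalently the second equality in \eqref{eqn:152403}) to convert between the two equivalent forms before the coefficients line up. Getting the dictionary between the index-set combinatorics of $S_{J\cup\{n\},I}$ and the action of $\iota_{\partial/\partial x_n}\circ d^\ast$ on $dx_I$ exactly right — signs included — is where the real care is needed; everything else is a routine, if lengthy, application of the lemmas already in place.
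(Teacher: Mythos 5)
Your plan is essentially the paper's proof: the paper also splits into $j=i$ and $j=i-1$, subdivides each into the three nonvanishing $(I,J)$-configurations, computes $g_{IJ}$ via Lemma \ref{lem:171274}, pushes the quadratic through $\Ctcal{\lambda-1}{\nu+1}{}$ with the identities of Proposition \ref{prop:152383}, and matches components of $\Cbb\lambda\nu{i,j}$. The one small difference is that the paper simply cites the explicit $(I,J)$-components of $\Cbb\lambda\nu{i,j}$ from [KKP] (Facts \ref{fact:153284} and \ref{fact:161478}) rather than re-deriving them from the $dd^\ast$, $d^\ast d$, $\iota_{\partial/\partial x_n}$ identities as you propose; both versions amount to the same bookkeeping.
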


The next two subsections will be devoted to the proof of Proposition \ref{prop:153294}
 by using the identities
 in Section \ref{subsec:Weylalg}.  
The cases $j=i$ and $j=i-1$ are treated separately 
 in Sections \ref{subsec:resAC} and \ref{subsec:resAC2}.  
In Proposition \ref{prop:AresgC}, 
 we have related the left-hand side of \eqref{eqn:gIJC}
 with the $(I,J)$-component
 of the regular symmetry breaking operator
 $\Atbb \lambda \nu {\pm} {i,j}$.  
Thus we shall complete the proof of Theorem \ref{thm:153316a}
 based on Proposition \ref{prop:153294}.  
The last step will be given in Section \ref{subsec:pfmain}.

\subsection{Proof of Proposition \ref{prop:153294} for $j=i$}
\label{subsec:resAC}
Let $j=i$.  
Suppose $I \in {\mathcal{I}}_{n,i}$ 
and $J \in {\mathcal{I}}_{n-1,i}$.  
The main cases will be the following.  
\par\noindent
{\bf{Case 1.}}\enspace 
$n \not \in I$, $J =I$.  
\par\noindent
{\bf{Case 2.}}\enspace 
$n \not \in I$, $\#(J \setminus I) =1$. 
We may write $I= K \cup \{p\}$, $J = K \cup \{q\}$.  
 \par\noindent
{\bf{Case 3.}}\enspace 
$n \in I$, $\#(J \setminus I) =1$. 
We may write $I= K \cup \{n\}$, $J = K \cup \{q\}$.   

\begin{fact}
[{\cite[Lem.~9.6]{KKP}}]
\label{fact:153284}
The $(I,J)$-component $(\Cbb \lambda \nu {i,i})_{I J}$
 of the differential operator 
$
   \Cbb \lambda \nu {i,i} \colon {\mathcal{E}}^i({\mathbb{R}}^{n})
   \to 
   {\mathcal{E}}^i({\mathbb{R}}^{n-1})
$ is equal to 
\begin{enumerate}
\item[]
{\rm{{\bf{Case 1.}}}}\enspace
$- 
\Ctbb{\lambda+1}{\nu-1}{}
(\sum_{p \in I}\frac{\partial^2}{\partial x_p^2})
+
\frac 1 2 (\nu - i)
\Ctbb{\lambda}{\nu}{}, 
$
\item[]
{\rm{{\bf{Case 2.}}}}\enspace
$ - \operatorname{sgn}(I;p,q) 
\Ctbb{\lambda+1}{\nu-1}{}
\frac{\partial^2}{\partial x_p \partial x_q}, 
$
\item[]
{\rm{{\bf{Case 3.}}}}\enspace
$ - \operatorname{sgn}(I;q,n)
\gamma(\lambda-\frac{n-1}{2}, \nu-\lambda)
\Ctbb{\lambda+1}{\nu}{}
\frac{\partial}{\partial x_q}, 
$
\end{enumerate}
and is zero otherwise.  
\end{fact}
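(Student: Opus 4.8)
The plan is to expand the defining expression for $\Cbb \lambda \nu {i,i}$, apply it to a basis form $f\, d x_I$, and read off the coefficient of $d x_J$. Recall
\[
\Cbb \lambda \nu {i,i}
=
\Ctbb{\lambda+1}{\nu-1}{}\, d d^{\ast}
-
\gamma(\lambda-\tfrac n2,\nu-\lambda)\,\Ctbb{\lambda}{\nu-1}{}\, d\,\iota_{\frac{\partial}{\partial x_n}}
+
\tfrac12(\nu-i)\,\Ctbb{\lambda}{\nu}{},
\]
where each scalar operator $\Ctbb{\mu}{\sigma}{}$ acts componentwise on the coefficient functions after applying $\operatorname{id}\otimes\pr i i$, i.e.\ after discarding every monomial $d x_K$ with $n\in K$ (so that the target is a form on ${\mathbb{R}}^{n-1}$). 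The elementary inputs are: $\iota_{\frac{\partial}{\partial x_n}}(f\, d x_I)=0$ if $n\notin I$ and $(-1)^{i-1}f\, d x_{I\setminus\{n\}}$ if $n\in I$; the formulas for $dd^{\ast}(f\, d x_I)$ recalled in the proof of Lemma~\ref{lem:170181}; and $d(g\, d x_K)=\sum_r\frac{\partial g}{\partial x_r}\, d x_r\wedge d x_K$.

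First I would dispose of the case $n\notin I$. Then the middle term vanishes identically, and after discarding the monomials containing $d x_n$ the output of $dd^{\ast}$ contributes $-\Ctbb{\lambda+1}{\nu-1}{}\bigl(\sum_{p\in I}\frac{\partial^2 f}{\partial x_p^2}\bigr)$ to the component $J=I$ and $-\operatorname{sgn}(I;p,q)\Ctbb{\lambda+1}{\nu-1}{}\frac{\partial^2 f}{\partial x_p\partial x_q}$ to the component $J=I\setminus\{p\}\cup\{q\}$ with $p\in I$, $q\notin I$ (which forces $q\le n-1$); the last term adds $\tfrac12(\nu-i)\Ctbb{\lambda}{\nu}{}f$ to $J=I$ only. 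This gives Cases~1 and 2 verbatim, and shows that for $n\notin I$ the only nonzero components have $J=I$ or $\#(J\setminus I)=1$.

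The substantive case is $n\in I$, say $I=K\cup\{n\}$. Since $\#J=\#I$ and $n\notin J$ we have $\#(J\setminus I)=\#(I\setminus J)\ge1$; if $\#(I\setminus J)\ge2$, then every monomial produced by $dd^{\ast}$, $d\,\iota_{\frac{\partial}{\partial x_n}}$ or $\Ctbb{\lambda}{\nu}{}$ either retains $d x_n$ (hence is discarded) or differs from $d x_J$, so the component is zero — this is ``zero otherwise''. For $J=K\cup\{q\}$ with $q\notin I$, $q\le n-1$, only the $p=n$ term of $dd^{\ast}$ and the $r=q$ term of $d\,\iota_{\frac{\partial}{\partial x_n}}$ survive the discarding; writing $d x_I=d x_K\wedge d x_n$, contracting, and reordering $d x_q\wedge d x_K$, a short sign computation (using $\operatorname{sgn}(I;n,q)=\operatorname{sgn}(I;q,n)$) gives
\[
(\Cbb \lambda \nu {i,i})_{I J}f
=
-\operatorname{sgn}(I;q,n)\Bigl[\Ctbb{\lambda+1}{\nu-1}{}\Bigl(\tfrac{\partial^2 f}{\partial x_n\partial x_q}\Bigr)+\gamma(\lambda-\tfrac n2,\nu-\lambda)\,\Ctbb{\lambda}{\nu-1}{}\Bigl(\tfrac{\partial f}{\partial x_q}\Bigr)\Bigr].
\]
Applying both sides of the scalar operator identity
\[
\Ctbb{\lambda+1}{\nu-1}{}\circ\frac{\partial}{\partial x_n}
+
\gamma(\lambda-\tfrac n2,\nu-\lambda)\,\Ctbb{\lambda}{\nu-1}{}
=
\gamma(\lambda-\tfrac{n-1}2,\nu-\lambda)\,\Ctbb{\lambda+1}{\nu}{}
\]
to $\frac{\partial f}{\partial x_q}$ then yields the asserted value $-\operatorname{sgn}(I;q,n)\gamma(\lambda-\tfrac{n-1}2,\nu-\lambda)\Ctbb{\lambda+1}{\nu}{}\frac{\partial f}{\partial x_q}$, so Case~3 reduces to this identity.

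It remains to prove the displayed scalar identity, which I expect to be the only nonroutine step. Writing $l=\nu-\lambda$ and $\alpha=\lambda-\frac{n-1}2$, the three operators equal $\operatorname{Rest}_{x_n=0}$ composed with $(I_{l-2}\widetilde C_{l-2}^{\alpha+1})$, $(I_{l-1}\widetilde C_{l-1}^{\alpha})$, $(I_{l-1}\widetilde C_{l-1}^{\alpha+1})$ evaluated at $(-\Delta_{{\mathbb{R}}^{n-1}},\frac{\partial}{\partial x_n})$; moreover post-composing the first with $\frac{\partial}{\partial x_n}$ turns $(I_{l-2}g)$ into $(I_{l-1}(z\,g))$. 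Since $I_l$ is injective on $\operatorname{Pol}_l[z]_{\operatorname{even}}$ and $\gamma(\lambda-\frac n2,l)=\gamma(\alpha-\frac12,l)$, $\gamma(\lambda-\frac{n-1}2,l)=\gamma(\alpha,l)$, the identity reduces to the polynomial recurrence
\[
z\,\widetilde C_{l-2}^{\alpha+1}(z)
+\gamma(\alpha-\tfrac12,l)\,\widetilde C_{l-1}^{\alpha}(z)
=\gamma(\alpha,l)\,\widetilde C_{l-1}^{\alpha+1}(z),
\]
a contiguous relation for the renormalized Gegenbauer polynomials that follows by a direct computation from the classical recurrences (such as those recalled in \eqref{eqn:diffzC}, \eqref{eqn:diffC}, \eqref{eqn:152563}), treating the parity of $l$ separately and checking the degenerate cases $l\le1$ by hand. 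The main obstacle is therefore the sign bookkeeping in Case~3 together with pinning down this recurrence — in particular the appearance of the two shifted arguments $\lambda-\frac n2$ and $\lambda-\frac{n-1}2$ of $\gamma$, which is precisely what it encodes; the analogous computation for $\Cbb \lambda \nu {i,i-1}$ (Fact~\ref{fact:161478}) then proceeds the same way, the extra $\iota_{\frac{\partial}{\partial x_n}}$ only shifting which monomials survive the projection.
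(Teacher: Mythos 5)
Your verification is correct, and it is worth noting that the paper itself gives no proof of this statement: it is imported verbatim from \cite[Lem.~9.6]{KKP} as an external input (the paper then uses it, in Section 7, to match the quoted components against $g_{IJ}(x)\Ctcal{\lambda-1}{\nu+1}{}$). Your argument therefore supplies, from the definition of $\Cbb \lambda \nu {i,i}$ recalled in Section 1 and the $dd^{\ast}$-formula recorded in the proof of Lemma \ref{lem:170181}, exactly the componentwise expansion that the paper outsources, and it is essentially the same computation as in \cite{KKP}. The sign bookkeeping in Case 3 checks out: with $I=K\cup\{n\}$, $J=K\cup\{q\}$ one has $\operatorname{sgn}(I;q,n)=(-1)^{i-1}(-1)^{\#\{k\in K:\,k<q\}}$, which reconciles the $(-1)^{i-1}$ from $\iota_{\frac{\partial}{\partial x_n}}dx_I$ and the reordering of $dx_q\wedge dx_K$ with the $p=n$ term of $dd^{\ast}$, giving your intermediate formula. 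The scalar identity you reduce Case 3 to, namely $z\,\widetilde C_{l-2}^{\alpha+1}(z)+\gamma(\alpha-\tfrac12,l)\,\widetilde C_{l-1}^{\alpha}(z)=\gamma(\alpha,l)\,\widetilde C_{l-1}^{\alpha+1}(z)$ with $\alpha=\lambda-\tfrac{n-1}2$, $l=\nu-\lambda$, is indeed true: expanding the renormalized coefficients and splitting on the parity of $l$ verifies it term by term, or equivalently it follows by combining \eqref{eqn:diffzC}--\eqref{eqn:diffC} with the classical relation $(1-z^2)\frac{d}{dz}C_l^{\alpha}=-lz\,C_l^{\alpha}+(2\alpha+l-1)C_{l-1}^{\alpha}$, the parity-dependent constants collapsing precisely to the two $\gamma$-factors. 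Your structural observation is also the right one: this operator identity is the symbol-level mirror of the distributional identities \eqref{eqn:1532101} and \eqref{eqn:152383n} that the paper proves and uses in the opposite direction. The only point to handle explicitly is the degenerate range $\nu-\lambda\le 1$, where $\Ctbb{\lambda+1}{\nu-1}{}$ has negative order and the corresponding term must be read as absent (convention $\widetilde C_{-1}\equiv 0$), which you flag; with that convention your proof is complete.
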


\begin{proof}
[Proof of Proposition \ref{prop:153294} for $j=i$]
It is readily seen
 that the both sides of \eqref{eqn:gIJC} vanish
 unless $(I,J)$ belongs to Cases 1--3.  
{}From now,
 we focus on Cases 1--3.  
\par\noindent
{{\bf{Case 1.}}}\enspace 
By \eqref{eqn:SIJ}, 
 the polynomial $g_{IJ}(x)$ in \eqref{eqn:gIJ}
 amounts to 
$
   g_{IJ}(x)=x_n^2 - \sum_{k=1}^{n-1}\varepsilon_I(k) x_k^2.  
$
Then a small computation using \eqref{eqn:1523114} and \eqref{eqn:152403} shows
 the following equalities 
 in ${\mathcal{D}}_{ \{0\} }'({\mathbb{R}}^n)$:
\begin{equation*}
g_{IJ}(x) \Ctcal{\lambda-1}{\nu+1}{}
\\
=
4(\nu-i) \Ctcal{\lambda}{\nu}{}
-8(\sum_{k \in I} \frac{\partial^2}{\partial x_k^2}) 
\Ctcal{\lambda+1}{\nu-1}{}.  
\end{equation*}
By Fact \ref{fact:153284} in Case 1
 and \eqref{eqn:160205}, 
 we get 
\begin{equation}
\label{eqn:R8}
 {\operatorname{Rest}}_{x_n=0} \circ (g_{IJ} \Ctcal {\lambda-1}{\nu+1}{}) \ast
=8 (\Cbb {\lambda}{\nu}{i,i})_{IJ}.  
\end{equation}
\vskip 0.5pc
\par\noindent
{{\bf{Case 2.}}}\enspace 
In this case, 
$
   g_{IJ}(x)= -2 \operatorname{sgn} (K; p,q)x_p x_q.  
$
By \eqref{eqn:1523101}, 
\begin{align*}
g_{IJ} (x) \Ctcal{\lambda-1}{\nu+1}{}
=&
-2 \operatorname{sgn} (K; p,q)x_p x_q \Ctcal{\lambda-1}{\nu+1}{}
\\
=&-8\operatorname{sgn} (K; p,q) \frac{\partial^2}{\partial x_p \partial x_q} \Ctcal{\lambda+1}{\nu-1}{}.  
\end{align*}
Comparing this with Fact \ref{fact:153284} in Case 2, 
 we get \eqref{eqn:R8} in this case.  
\vskip 0.5pc
\par\noindent
{{\bf{Case 3.}}}\enspace 
Suppose $n \in I$ and $I=K \cup \{n\}$, 
 $J=K \cup \{q\}$.  
Then 
\[
   g_{IJ}(x)= 2 \operatorname{sgn} (K; q,n)x_q x_n.  
\]
By \eqref{eqn:152383n}, 
we have
\begin{align*}
g_{IJ}(x) \Ctcal{\lambda-1}{\nu+1}{}
=&
2 \operatorname{sgn} (K; q,n)x_q x_n \Ctcal{\lambda-1}{\nu+1}{}
\\
=& -8 \gamma(\lambda - \frac {n-1}{2}, \nu - \lambda)
   \operatorname{sgn} (K; q,n) \frac{\partial}{\partial x_q}
   \Ctcal{\lambda+1}{\nu}{}.  
\end{align*}
Again by Fact \ref{fact:153284}, 
 we get \eqref{eqn:R8} in this case.  
\end{proof}

\subsection{Proof of Proposition \ref{prop:153294}
 for $j=i-1$}
\label{subsec:resAC2}
In this section,
 we give a proof 
 of Proposition \ref{prop:153294}
 for $j=i-1$.  
Suppose $I \in {\mathcal{I}}_{n,i}$ 
and $J \in {\mathcal{I}}_{n-1,i-1}$.  
The main cases will be the following.  
\par\noindent
{{\bf{Case 1.}}}\enspace 
$n \in I$, $J =I \setminus \{ n \}$.  
\par\noindent
{{\bf{Case 2.}}}\enspace 
$n \in I$, $\#(J \setminus I) =1$. 
We may write $I= K \cup \{p, n\}$, $J = K \cup \{q\}$.  
\par\noindent
{{\bf{Case 3.}}}\enspace 
$n \not \in I$, $J \subset I$. 
We may write $I= J \cup \{p\}$.   

\begin{fact}
[{\cite[Lem. 9.5]{KKP}}]
\label{fact:161478}
The $(I,J)$-component $(\Cbb \lambda \nu {i,i-1})_{I J}$
 of the differential operator 
 $\Cbb \lambda \nu {i,i-1}\colon {\mathcal{E}}^i({\mathbb{R}}^{n})\to {\mathcal{E}}^{i-1}({\mathbb{R}}^{n-1})$ is equal to 
\begin{enumerate}
\item[]
{\rm{{\bf{Case 1.}}}}\enspace
$(-1)^{i-1}
(-\Ctbb{\lambda+1}{\nu-1}{}
(\sum_{p \not \in I}\frac{\partial^2}{\partial x_p^2})
+
\frac {\nu + i - n} 2
\Ctbb{\lambda}{\nu}{}), 
$
\item[]
{\rm{{\bf{Case 2.}}}}\enspace
$ (-1)^{i-1} \operatorname{sgn}(I;p,q) 
\Ctbb{\lambda+1}{\nu-1}{}
\frac{\partial^2}{\partial x_p \partial x_q}, 
$
\item[]
{\rm{{\bf{Case 3.}}}}\enspace
$\operatorname{sgn}(I;p)
\gamma(\lambda-\frac{n-1}{2}, \nu-\lambda)
\Ctbb{\lambda+1}{\nu}{}
\frac{\partial}{\partial x_p}, 
$
\end{enumerate}
and is zero otherwise.  
\end{fact}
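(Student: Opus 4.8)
The plan is to compute the $(I,J)$-components directly from the defining formula
\[
  \Cbb \lambda \nu {i,i-1}
  =
  - \Ctbb{\lambda+1}{\nu-1}{}\, d d^{\ast}\iota_{\frac{\partial}{\partial x_n}}
  - \gamma(\lambda - \tfrac{n-1}{2}, \nu -\lambda)\,\Ctbb{\lambda+1}{\nu}{}\, d^{\ast}
  + \tfrac12 (\lambda+i-n)\,\Ctbb{\lambda}{\nu}{}\,\iota_{\frac{\partial}{\partial x_n}}
\]
recalled in Section~\ref{subsec:residue}, by evaluating it on a basis form $f\,dx_I$ with $I\in{\mathfrak{I}}_{n,i}$ and reading off the coefficient of $dx_J$ for $J\in{\mathfrak{I}}_{n-1,i-1}$. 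I would first record the elementary actions on basis forms: $\iota_{\frac{\partial}{\partial x_n}}(f\,dx_I)=(-1)^{i-1}f\,dx_{I\setminus\{n\}}$ if $n\in I$ and $=0$ otherwise; $d^{\ast}(f\,dx_I)$ is the signed sum of the $\partial_r f\,dx_{I\setminus\{r\}}$ over $r\in I$; and $d d^{\ast}(f\,dx_K)$ splits into a diagonal part $-\sum_{p\in K}\partial_p^2 f\,dx_K$ and an off-diagonal part $-\sum_{p\in K,\,q\notin K}\operatorname{sgn}(K;p,q)\,\partial_p\partial_q f\,dx_{K\setminus\{p\}\cup\{q\}}$, exactly as in the proof of Lemma~\ref{lem:170181}. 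The second ingredient is the behaviour of Juhl's operator $\Ctbb \lambda \nu {}$ when it is applied to a form on ${\mathbb{R}}^n$: since $\operatorname{Rest}_{x_n=0}(dx_n)=0$ it annihilates every summand whose index set contains $n$, and by Lemma~\ref{lem:160205} (and constant-coefficient commutation) it commutes with $\partial_p$ for $1\le p\le n-1$.

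A case analysis on the combinatorial type of $(I,J)$ then does the job. If $n\notin I$, both $\iota_{\frac{\partial}{\partial x_n}}$-terms vanish, so only the middle term survives, and its $r=p$ summand of $d^{\ast}$ produces $dx_J$ precisely when $J=I\setminus\{p\}$; identifying the sign $\operatorname{sgn}(I;p)$ appearing in $d^{\ast}$ gives Case~3, while all other $J$ give $0$. If $n\in I$, write $I=I'\cup\{n\}$ with $I'\in{\mathfrak{I}}_{n-1,i-1}$; then $\operatorname{Rest}_{x_n=0}$ discards the summands of $d d^{\ast}(f\,dx_{I'})$ carrying a new index $n$, so the surviving targets of the first term are $dx_{I'}$ (diagonal part) and $dx_{I'\setminus\{p\}\cup\{q\}}$ with $p\in I'$ and $q\notin I$ (off-diagonal part), while only the $r=n$ summand of $d^{\ast}(f\,dx_I)$ and the whole third term survive, both contributing to $dx_{I'}$. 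This already shows the $(I,J)$-component vanishes outside Cases~1--3, and Case~2 drops out at once once one notes $\operatorname{sgn}(I';p,q)=\operatorname{sgn}(I;p,q)$, which holds because the extra index $n$ exceeds $p$ and $q$ and so is never counted.

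The substantive case is Case~1 ($n\in I$, $J=I\setminus\{n\}=I'$). There the diagonal part of the first term gives $(-1)^{i-1}\Ctbb{\lambda+1}{\nu-1}{}\sum_{p\in I'}\partial_p^2$; writing $\sum_{p\in I'}\partial_p^2=\Delta_{{\mathbb{R}}^{n-1}}-\sum_{p\notin I}\partial_p^2$ produces the expected term $-(-1)^{i-1}\Ctbb{\lambda+1}{\nu-1}{}\sum_{p\notin I}\partial_p^2$ together with a leftover $(-1)^{i-1}\Ctbb{\lambda+1}{\nu-1}{}\Delta_{{\mathbb{R}}^{n-1}}$, while the second term gives $(-1)^{i-1}\gamma(\lambda-\tfrac{n-1}{2},\nu-\lambda)\,\Ctbb{\lambda+1}{\nu}{}\frac{\partial}{\partial x_n}$ and the third $(-1)^{i-1}\tfrac12(\lambda+i-n)\,\Ctbb{\lambda}{\nu}{}$. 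Hence everything reduces to the scalar identity for Juhl's operators
\[
  \gamma(\lambda-\tfrac{n-1}{2},\nu-\lambda)\,\Ctbb{\lambda+1}{\nu}{}\frac{\partial}{\partial x_n}
  + \Ctbb{\lambda+1}{\nu-1}{}\,\Delta_{{\mathbb{R}}^{n-1}}
  = \tfrac{\nu-\lambda}{2}\,\Ctbb{\lambda}{\nu}{},
\]
after which $\tfrac12(\lambda+i-n)+\tfrac{\nu-\lambda}{2}=\tfrac{\nu+i-n}{2}$ yields Case~1. I would establish this identity from the polynomial identity
\[
  \gamma(\alpha,l)\,t\,(I_{l-1}\widetilde C_{l-1}^{\alpha+1})(s,t)
  - s\,(I_{l-2}\widetilde C_{l-2}^{\alpha+1})(s,t)
  = \tfrac{l}{2}\,(I_{l}\widetilde C_{l}^{\alpha})(s,t),
\]
with $\alpha=\lambda-\tfrac{n-1}{2}$ and $l=\nu-\lambda$, which follows by combining Euler's homogeneity relation $2s\,\partial_s(I_lg)+t\,\partial_t(I_lg)=l\,(I_lg)$ (immediate from \eqref{eqn:Ilgst}) with \eqref{eqn:sIg}, \eqref{eqn:tIg} and the Gegenbauer identities \eqref{eqn:diffzC}, \eqref{eqn:diffC}; substituting $s=-\Delta_{{\mathbb{R}}^{n-1}}$ (which turns the minus sign into a plus) and $t=\frac{\partial}{\partial x_n}$, and composing with $\operatorname{Rest}_{x_n=0}$, gives the operator identity. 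Alternatively the same identity can be obtained at the level of distribution kernels from Lemma~\ref{lem:1532101} and Proposition~\ref{prop:152383}. The main obstacle is the bookkeeping of the exterior-algebra signs ($\operatorname{sgn}(I;p,q)$, the $(-1)^{i-1}$, the signs inside $d^{\ast}$) together with keeping track of which summands are killed by $\operatorname{Rest}_{x_n=0}$; the rest is routine computation.
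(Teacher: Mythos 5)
The paper does not prove this statement: it is quoted as a Fact from \cite[Lem.~9.5]{KKP}, so there is no internal proof to compare against. Your direct verification from the defining formula of $\Cbb \lambda \nu {i,i-1}$ recalled in Section~\ref{subsec:residue} is correct and self-contained. The combinatorial part checks out: the identity $\operatorname{sgn}(I\setminus\{n\};p,q)=\operatorname{sgn}(I;p,q)$ (valid because $n$ exceeds both $p$ and $q$) settles Case~2, and the vanishing outside Cases~1--3 follows exactly as you argue from which summands survive $\iota_{\frac{\partial}{\partial x_n}}$ and the restriction to $x_n=0$. The only substantive analytic input is the scalar identity
\begin{equation*}
\gamma(\lambda-\tfrac{n-1}{2},\nu-\lambda)\,\Ctbb{\lambda+1}{\nu}{}\,\frac{\partial}{\partial x_n}
+\Ctbb{\lambda+1}{\nu-1}{}\,\Delta_{{\mathbb{R}}^{n-1}}
=\tfrac{\nu-\lambda}{2}\,\Ctbb{\lambda}{\nu}{},
\end{equation*}
and your derivation of it is sound: the Euler relation $2s\partial_s(I_lg)+t\partial_t(I_lg)=l\,(I_lg)$ combined with \eqref{eqn:sIg}, \eqref{eqn:tIg} gives $s\,I_{l-2}((l-\theta)g)+t\,I_{l-1}(g')=l\,(I_lg)$, and inserting \eqref{eqn:diffzC} and \eqref{eqn:diffC} for $g=\widetilde C_l^{\alpha}$ with $\alpha=\lambda-\tfrac{n-1}{2}$, $l=\nu-\lambda$ yields precisely the displayed relation after the substitution $s=-\Delta_{{\mathbb{R}}^{n-1}}$, $t=\frac{\partial}{\partial x_n}$; this is the analogue, for the pair $(\frac{\partial}{\partial x_n},\Delta_{{\mathbb{R}}^{n-1}})$, of the three-term relation \eqref{eqn:152563} invoked in Proposition~\ref{prop:152383}. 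Together with $\tfrac12(\lambda+i-n)+\tfrac{\nu-\lambda}{2}=\tfrac{\nu+i-n}{2}$ this closes Case~1, and the whole argument is essentially the computation carried out in \cite[Chap.~9]{KKP}.
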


\begin{proof}
[Proof of Proposition \ref{prop:153294} for $j=i-1$]
{}~~~{}
\par
{\bf{Case 1.}} \enspace
By \eqref{eqn:SIJ}, 
we have 
$
g_{IJ}(x)=(-1)^i(x_n^2 + \sum_{k=1}^{n-1} \varepsilon_I(k) x_k^2).  
$
Then \eqref{eqn:1523114} and \eqref{eqn:152403} tell
 that 
\begin{equation*}
(-1)^i g_{IJ}(x) \Ctcal {\lambda-1}{\nu+1}{}
= 4 (\nu -n +i)\Ctcal {\lambda}{\nu}{}
 -8 \sum_{k \not \in I} \frac{\partial^2}{\partial x_k^2} 
 \Ctcal {\lambda+1}{\nu-1}{}.  
\end{equation*}
Comparing this with Fact \ref{fact:161478} in Case 1, 
 we get 
\begin{equation}
\label{eqn:R8b}
\operatorname{Rest}_{x_n=0} \circ (g_{IJ} \Ctcal {\lambda-1}{\nu+1}{})\ast
= -8 (\Cbb {\lambda}{\nu}{i,i-1})_{IJ}.  
\end{equation}

\vskip 0.5pc
{\bf{Case 2.}} \enspace
In this case, 
$
g_{IJ}(x)= 2 (-1)^i \operatorname{sgn}(K;p,q) x_p x_q.  
$
By \eqref{eqn:1523101}, 
we have 
\[
g_{IJ}(x) \Ctcal {\lambda-1}{\nu+1}{}
= 8 (-1)^{i} \operatorname{sgn}(K;p,q) 
  \frac{\partial^2}{\partial x_p \partial x_q}
   \Ctcal {\lambda+1}{\nu-1}{}.  
\]
Hence, 
 Fact \ref{fact:161478} in Case 2 implies \eqref{eqn:R8b}.

\vskip 0.5pc
{\bf{Case 3.}} \enspace
In this case, 
$
g_{IJ}(x)= -2 \operatorname{sgn}(I;p) x_p x_n.  
$
Then \eqref{eqn:152383n} implies 
\[
g_{IJ}(x) \Ctcal {\lambda-1}{\nu+1}{}
= -8 \gamma(\lambda-\frac{n-1}{2}, \nu-\lambda)
    \operatorname{sgn}(I;p) 
    \frac{\partial}{\partial x_p}
    \Ctcal {\lambda+1}{\nu}{}.  
\]
Hence Fact \ref{fact:161478} in Case 3 shows \eqref{eqn:R8b}.  
\end{proof}

\subsection{Proof of Theorem \ref{thm:153316a}}
\label{subsec:pfmain}
We are ready to complete the proof of Theorem \ref{thm:153316a}.

\begin{proof}
[Proof of Theorem \ref{thm:153316a}]
Let $\gamma=0$.  
Theorem \ref{thm:153316a} in this case 
 follows from Propositions \ref{prop:AresgC} and \ref{prop:153294}.  

Next, 
let $\gamma=1$.  
By \eqref{eqn:1532101} and Proposition \ref{prop:153294}, 
\begin{align*}
x_n g_{I J} \Ctcal {\lambda-2}{\nu+1}{}
=& 
(n-\lambda-\nu) g_{I J} \Ctcal {\lambda-1}{\nu+1}{}
\\
=&
8 (-1)^{i-j+1} (\lambda+\nu-n)
(\Ccal {\lambda}{\nu}{i,j})_{I J}.  
\end{align*}
Hence Theorem \ref{thm:153316a} for $\gamma=1$ is shown.  
\end{proof}

\section{Vanishing condition of the symmetry breaking operator
 $\Atbb \lambda \nu {\pm} {i,j}$}
\label{subsec:Avanish}

As a corollary of Theorem \ref{thm:153316a}, 
 we can determine the (isolated) zeros of the analytic continuation
 $\Atbb \lambda \nu {\pm} {i,j}$
 of the integral operators.  
Following the notation as in \cite[Chap.~1]{sbon}, 
 we define two subsets in ${\mathbb{Z}}^2$ as below:
\begin{align*}
L_{{\operatorname{even}}}&:=\{(-i,-j): 0 \le j \le i \text{ and } i \equiv j \mod 2\},
\\
L_{{\operatorname{odd}}}&:=\{(-i,-j): 0 \le j \le i \text{ and } i \equiv j+1 \mod 2\}.   
\end{align*}
\begin{theorem}
\label{thm:Avanish}
\begin{enumerate}
\item[{\rm{(1)}}]
Suppose $\nu - \lambda \in 2 {\mathbb{N}}$.  
\newline
$\Atbb \lambda \nu {+} {i,i}=0$
if and only if
\begin{equation*}
(\lambda, \nu)\in 
\begin{cases}
L_{\operatorname{even}}
&\text{for $i=0$}, 
\\
(L_{\operatorname{even}} \setminus \{\nu =0\})\cup \{(i,i)\}
\quad
&\text{for $1 \le i \le n-1$}.   
\end{cases}
\end{equation*}
$\Atbb \lambda \nu {+} {i,i-1}=0$
if and only if
\begin{equation*}
(\lambda, \nu)\in 
\begin{cases}
(L_{\operatorname{even}} \setminus \{\nu =0\})\cup \{(n-i,n-i)\}
\quad
&\text{for $1 \le i \le n-1$}, 
\\
L_{\operatorname{even}}
&\text{for $i=n$}.   
\end{cases}
\end{equation*}
\item[{\rm{(2)}}]
Suppose $\nu - \lambda \in 2 {\mathbb{N}}+1$.  
\newline
$\Atbb \lambda \nu {-} {i,i}=0$
if and only if
\begin{equation*}
(\lambda, \nu)\in 
\begin{cases}
L_{\operatorname{odd}}
&\text{for $i=0$}, 
\\
L_{\operatorname{odd}} \setminus \{\nu =0\}
\quad
&\text{for $1 \le i \le n-1$}.   
\end{cases}
\end{equation*}
$\Atbb \lambda \nu {-} {i,i-1}=0$
if and only if
\begin{equation*}
(\lambda, \nu)\in 
\begin{cases}
L_{\operatorname{odd}} \setminus \{\nu =0\}
\quad
&\text{for $1 \le i \le n-1$}, 
\\
L_{\operatorname{odd}}
&\text{for $i=n$}.   
\end{cases}
\end{equation*}
\end{enumerate}
\end{theorem}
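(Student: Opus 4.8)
The plan is to read the vanishing of $\Atbb \lambda \nu {\pm}{i,j}$ off the residue formula of Theorem~\ref{thm:153316a}. Under the hypothesis $\nu-\lambda\in2{\mathbb N}$ (for the $+$ operators, with $\kappa=0$) or $\nu-\lambda\in2{\mathbb N}+1$ (for the $-$ operators, with $\kappa=1$) — which is exactly the range in which Theorem~\ref{thm:153316a} applies — writing $\nu-\lambda=2m+\kappa$ we have
\[
  \Atbb \lambda \nu \kappa {i,j}
  =
  \frac{(-1)^{i-j+m+\kappa}\pi^{\frac{n-1}2}m!}{2^{2m-1+3\kappa}\,\Gamma(\nu+1)}\,\Cbb \lambda \nu {i,j}.
\]
Thus $\Atbb \lambda \nu \kappa {i,j}=0$ if and only if the scalar prefactor vanishes or $\Cbb \lambda \nu {i,j}=0$. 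The prefactor vanishes precisely at the poles of $\Gamma(\nu+1)$, i.e.\ for $\nu\in{\mathbb Z}_{<0}$; intersecting $\{\nu\in{\mathbb Z}_{<0}\}$ with the locus and comparing with the definitions of $L_{\operatorname{even}},L_{\operatorname{odd}}$ shows that this contributes exactly $L_{\operatorname{even}}\setminus\{\nu=0\}$ (resp.\ $L_{\operatorname{odd}}\setminus\{\nu=0\}$) to the zero set. It remains to locate the zeros of the matrix-valued differential operator $\Cbb \lambda \nu {i,j}$.

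For this I would work component-wise, using the formul\ae\ for $(\Cbb \lambda \nu {i,j})_{IJ}$ in Facts~\ref{fact:153284} and~\ref{fact:161478} together with: (i)~a constant-coefficient operator composed with $\operatorname{Rest}_{x_n=0}$ is zero iff all its coefficients vanish; (ii)~the normalized scalar Juhl operator $\Ctbb{\lambda'}{\nu'}{}$ is a \emph{nonzero} differential operator whenever $\nu'-\lambda'\in{\mathbb N}$ (immediate from Example~\ref{ex:PsiC}, as $\widetilde C_l^\alpha\ne0$), while the shifted factors $\Ctbb{\lambda+1}{\nu-1}{}$ and $\Ctbb{\lambda+1}{\nu}{}$, which raise the order by $2$ and by $1$, are understood to be $0$ when $\nu-\lambda\le1$ and $\nu-\lambda=0$; and (iii)~the three-term relations among Juhl operators (cf.\ Proposition~\ref{prop:152383} and~\eqref{eqn:152563}). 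At the endpoint degrees one has $(\Cbb \lambda \nu {0,0})_{\emptyset\emptyset}=\tfrac12\nu\,\Ctbb \lambda \nu {}$ and, for $i=n$, $(\Cbb \lambda \nu {n,n-1})_{\{1,\dots,n\},\{1,\dots,n-1\}}=\pm\tfrac12\nu\,\Ctbb \lambda \nu {}$, each vanishing exactly on $\{\nu=0\}$; with the prefactor this reproduces all of $L_{\operatorname{even}}$ (resp.\ $L_{\operatorname{odd}}$). For interior degrees $1\le i\le n-1$ the off-diagonal (Case~2, Case~3) components have the shape $(\text{unit})\,\Ctbb{\lambda+1}{\nu-1}{}\partial_p\partial_q$ and $\operatorname{sgn}(\cdots)\,\gamma(\lambda-\tfrac{n-1}2,\nu-\lambda)\,\Ctbb{\lambda+1}{\nu}{}\partial_q$. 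On the odd locus $\gamma(\cdot,\nu-\lambda)=1$, so Case~3 never vanishes and $\Cbb \lambda \nu {i,j}\ne0$, giving $L_{\operatorname{odd}}\setminus\{\nu=0\}$ in part~(2). On the even locus with $\nu-\lambda\ge2$, the Case~2 component is a genuine nonzero operator for most interior $i$, and for the few $i$ where it is unavailable one combines Case~3 — which can vanish only on $\{\lambda+\nu=n-1\}$ — with Case~1, on which hyperplane the three-term relation collapses Case~1 to a nonzero multiple of $\Ctbb \lambda \nu {}$; hence $\Cbb \lambda \nu {i,j}\ne0$ unless $\nu-\lambda=0$. Finally, at $\nu-\lambda=0$ all off-diagonal and order-raising terms disappear and Case~1 becomes $\tfrac12(\nu-i)\operatorname{Rest}_{x_n=0}$ (for $j=i$) or $\pm\tfrac12(\lambda+i-n)\operatorname{Rest}_{x_n=0}$ (for $j=i-1$), vanishing exactly at the isolated points $(i,i)$, resp.\ $(n-i,n-i)$. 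Assembling the cases gives the four assertions.

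The main obstacle is the bookkeeping in the second step: one must determine, for each form degree and for the first one or two values of $\nu-\lambda$ on each locus, precisely which of the three types of index pairs $(I,J)$ occur, how the shifted Juhl operators $\Ctbb{\lambda+1}{\nu-1}{}$, $\Ctbb{\lambda+1}{\nu}{}$, $\Ctbb \lambda \nu {}$ behave there (order~$0$, or vanishing by convention), and which reductions by the three-term relations are needed — and it is exactly the scalar coefficients $\tfrac12(\nu-i)$ and $\tfrac12(\lambda+i-n)$, together with the pole of $\Gamma(\nu+1)$, that account for the whole zero set. Everything else is a formal consequence of the residue formula and the non-vanishing of Juhl's operators.
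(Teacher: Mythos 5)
Your proposal is correct and follows essentially the same route as the paper: it reduces the question, via the residue formula of Theorem \ref{thm:153316a}, to the zero of $1/\Gamma(\nu+1)$ (giving $L_{\operatorname{even}}\setminus\{\nu=0\}$, resp.\ $L_{\operatorname{odd}}\setminus\{\nu=0\}$) together with the vanishing of $\Cbb \lambda \nu {i,j}$. The only difference is that the paper quotes the vanishing criterion for $\Cbb \lambda \nu {i,j}$ as Lemma \ref{lem:Cij0} (from \cite{KKP}), whereas you re-derive it componentwise from Facts \ref{fact:153284} and \ref{fact:161478}; your sketch of that verification (endpoint degrees, the odd locus via Case 3, and the borderline $\lambda+\nu=n-1$ handled by the three-term relation collapsing Case 1 to $-\tfrac{\nu}{2}\Ctbb \lambda \nu {}$) is sound and is exactly what the paper asserts that lemma amounts to.
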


Owing to the residue formula
 (Theorem \ref{thm:153316a}), 
we can reduce the proof of Theorem \ref{thm:Avanish}
 to an easy question,
 that is, 
 to find a necessary and sufficient condition
 for the matrix-valued differential symmetry breaking operators
 $\Cbb \lambda \nu {i,j}$ to vanish.  
The latter condition is verified immediately by the formula
 for the $(I,J)$-component 
 of $\Cbb \lambda \nu {i,j}$
 (see Facts \ref{fact:153284} and \ref{fact:161478}), 
 and is described as follows:
\begin{lemma}
[{\cite[Prop.~1.4 and p.~23]{KKP}}]
\label{lem:Cij0}
Suppose $(\lambda, \nu)\in {\mathbb{C}}^2$
 with $\nu-\lambda \in {\mathbb{N}}$.  
\begin{enumerate}
\item[{\rm{(1)}}]
Let $1 \le i \le n$.  
Then $\Cbb \lambda \nu {i,i}$ vanishes
 if and only if $\lambda = \nu=i$
 or $\nu=i=0$.  
\item[{\rm{(2)}}]
Let $1 \le i \le n-1$.  
Then $\Cbb \lambda \nu {i,i-1}$ vanishes
 if and only if $\lambda = \nu=n-i$
 or $\nu=n-i=0$.  
\end{enumerate}
\end{lemma}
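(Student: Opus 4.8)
The plan is to read the lemma off the explicit formulas for the matrix entries of $\Cbb \lambda \nu {i,j}$ recorded in Facts \ref{fact:153284} and \ref{fact:161478}. A differential operator between vector bundles is zero if and only if all of its matrix components are, so $\Cbb \lambda \nu {i,i}=0$ (respectively $\Cbb \lambda \nu {i,i-1}=0$) is equivalent to the simultaneous vanishing of every operator listed in Fact \ref{fact:153284} (respectively Fact \ref{fact:161478}). The only analytic ingredient I need is that Juhl's scalar operator $\Ctbb \lambda \nu {}$ is nonzero for every $(\lambda,\nu)$ with $\nu-\lambda\in{\mathbb{N}}$: by Example \ref{ex:PsiC} it equals ${\operatorname{Rest}}_{x_n=0}\circ(I_{\nu-\lambda}\widetilde C_{\nu-\lambda}^{\lambda-\frac{n-1}{2}})(-\Delta_{{\mathbb{R}}^{n-1}},\frac{\partial}{\partial x_n})$, and since the renormalized Gegenbauer polynomial is a nonzero element of $\operatorname{Pol}_{\nu-\lambda}[z]_{\operatorname{even}}$, evaluating on $\exp(\sqrt{-1}(\xi'\cdot x'+\xi_nx_n))$ and letting $(|\xi'|^2,\sqrt{-1}\xi_n)$ range over ${\mathbb{R}}_{>0}\times\sqrt{-1}{\mathbb{R}}$ produces a nonzero value. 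The same evaluation shows that $\Ctbb \lambda \nu {}$ followed by any of the operators $\frac{\partial}{\partial x_p}$, $\frac{\partial^2}{\partial x_p\partial x_q}$ with $1\le p,q\le n-1$ is again nonzero. I also use the convention, forced by \eqref{eqn:Cln}, that $\Ctbb {\lambda'}{\nu'}{}=0$ when $\nu'-\lambda'$ is a negative integer.

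Granting this, I would argue by cases on $l:=\nu-\lambda$, assuming throughout that $1\le i\le n-1$. If $l\ge1$ is odd, then $\gamma(\lambda-\frac{n-1}{2},l)=1$ and $\nu-(\lambda+1)=l-1\in{\mathbb{N}}$, so the Case~3 entry of Fact \ref{fact:153284} (resp. Fact \ref{fact:161478}), which exists for every $i$ in this range, is the nonzero operator $\pm\,\Ctbb{\lambda+1}{\nu}{}\frac{\partial}{\partial x_q}$; hence $\Cbb \lambda \nu {i,j}\ne0$. If $l\ge2$ is even and a Case~2 configuration is available — which it is unless $i=n-1$ for $\Cbb \lambda \nu {i,i}$ or $i=1$ for $\Cbb \lambda \nu {i,i-1}$ — then the corresponding entry is $\pm\,\Ctbb{\lambda+1}{\nu-1}{}\frac{\partial^2}{\partial x_p\partial x_q}\ne0$, using $\nu-1-(\lambda+1)=l-2\in{\mathbb{N}}$. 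In the two exceptional boundary situations I would instead examine the Case~1 entry, in which the pertinent partial Laplacian is the full $\Delta_{{\mathbb{R}}^{n-1}}$; rewriting $\Ctbb{\lambda+1}{\nu-1}{}\Delta_{{\mathbb{R}}^{n-1}}$ via the three-term relation used in the proof of Proposition \ref{prop:152383} shows that this entry equals $\pm\bigl(\gamma(\lambda-\frac{n-1}{2},l)\,\Ctbb{\lambda+1}{\nu+1}{}-\frac{\nu}{2}\,\Ctbb \lambda \nu {}\bigr)$. When $\gamma(\lambda-\frac{n-1}{2},l)\ne0$ the Case~3 entry already forces $\Cbb \lambda \nu {i,j}\ne0$; when $\gamma(\lambda-\frac{n-1}{2},l)=0$, i.e. $\lambda+\nu=n-1$, the first term drops out and the entry becomes $\mp\frac{\nu}{2}\Ctbb \lambda \nu {}$, which is nonzero because $\lambda+\nu=n-1$ together with $l\ge2$ forces $\nu\ne0$. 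Thus $\Cbb \lambda \nu {i,j}\ne0$ whenever $l\ge1$.

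It remains to treat $l=0$, i.e. $\lambda=\nu$. Then $\Ctbb{\lambda+1}{\nu-1}{}$, $\Ctbb{\lambda}{\nu-1}{}$ and $\Ctbb{\lambda+1}{\nu}{}$ all vanish by the convention above, so the defining formulas reduce to $\Cbb \lambda \lambda {i,i}=\frac12(\lambda-i)\,\Ctbb \lambda \lambda {}$ and $\Cbb \lambda \lambda {i,i-1}=\frac12(\lambda+i-n)\,\Ctbb \lambda \lambda {}\,\iota_{\frac{\partial}{\partial x_n}}$, where $\Ctbb \lambda \lambda {}={\operatorname{Rest}}_{x_n=0}$; for $1\le i\le n-1$ the operators $\Ctbb \lambda \lambda {}$ (acting on $i$-forms) and $\Ctbb \lambda \lambda {}\circ\iota_{\frac{\partial}{\partial x_n}}$ are nonzero, so these vanish exactly when $\lambda=i$, hence $\lambda=\nu=i$, respectively when $\lambda=n-i$, hence $\lambda=\nu=n-i$; the scalar case $i=0$, in which $\Cbb \lambda \nu {0,0}=\frac12\nu\,\Ctbb \lambda \nu {}$, yields $\nu=i=0$. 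Combining the two parities proves the lemma. The one genuinely delicate point is the even-$l$ boundary case, where no single matrix entry is visibly nonzero and one must appeal to the three-term relation to see that the surviving operator $\mp\frac{\nu}{2}\Ctbb \lambda \nu {}$ cannot vanish; everything else is bookkeeping of index sets.
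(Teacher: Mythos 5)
Your proposal is correct and takes essentially the same route as the paper, which cites \cite[Prop.~1.4 and p.~23]{KKP} and asserts that the condition "is verified immediately by the formula for the $(I,J)$-component" in Facts \ref{fact:153284} and \ref{fact:161478} --- precisely the entry-by-entry inspection you carry out. You also correctly isolate and resolve the one point that is not quite "immediate," namely the even-degree boundary configurations where Case~2 entries are unavailable and one must combine the three-term relation with the dichotomy on $\gamma(\lambda-\tfrac{n-1}{2},\nu-\lambda)$ against the Case~3 entries.
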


\begin{proof}
[Proof of Theorem \ref{thm:Avanish}]
Suppose
\begin{equation*}
\nu-\lambda \in 2 {\mathbb{N}}
\quad
(\varepsilon=+)
\quad
\text{or}
\quad
\nu-\lambda \in 2 {\mathbb{N}}+1
\quad
(\varepsilon=-).  
\end{equation*}
In either case, 
 the residue formula in Theorem \ref{thm:153316a} asserts that 
\[
\Atbb \lambda \nu \varepsilon {i,j} = 
\frac{c}{\Gamma(\nu+1)}\Cbb \lambda \nu {i,j}, 
\]
for some $c \ne 0$.  
Therefore we have 
\[
  \Atbb \lambda \nu \varepsilon {i,j} = 0
\quad
\text{if and only if}
\quad
\nu \in -{\mathbb{N}}_+
\text{ or }
\Cbb \lambda \nu {i,j}=0.  
\]
In light of Lemma \ref{lem:Cij0}, 
 we conclude Theorem \ref{thm:Avanish}.  
\end{proof}
\begin{remark}
\label{rem:Avanish}
By the general results (see \cite{sbonvec}), 
 $\Atbb \lambda \nu \varepsilon {i,j}$ vanishes
 only if 
\[
  \text{
 $\nu -\lambda \in 2 {\mathbb{N}}\quad(\varepsilon =+)$
 \quad
  or 
 \quad
  $\nu -\lambda \in 2 {\mathbb{N}} + 1\quad(\varepsilon =-)$.  
}
\]
Hence Theorem \ref{thm:Avanish} determines precisely
 when the regular symmetry breaking operator $\Atbb \lambda \nu \pm {i,j}$ vanishes.  
\end{remark}

\bigskip


\end{document}